\tikzstyle{vertex}=[circle, draw, inner sep=0pt, minimum size=6pt]
\newcommand{\vertex}{\node[vertex]}
\def\NZQ{\Bbb}               % the font for N,Z,Q,R,C
\def\ZZ{{\NZQ Z}}
\def\B'c{{\mathcal{B'}}}
\def\U'c{{\mathcal{U'}}}
\def\opn#1#2{\def#1{\operatorname{#2}}} % to make operators
\opn\chara{char}
\opn\length{\ell}
\opn\projdim{proj\,dim}
\opn\injdim{inj\,dim}
\opn\ini{in}
\opn\rank{rank}
\opn\depth{depth}
\opn\sdepth{sdepth}
\opn\indmat{indmat}
\opn\cochord{cochord}
\opn\pdim{pdim}
\opn\height{ht}
\opn\embdim{emb\,dim}
\opn\codim{codim}
\opn\Tr{Tr}
\opn\bigrank{big\,rank}
\opn\superheight{superheight}\opn\lcm{lcm}
\opn\trdeg{tr\,deg}%
\opn\reg{reg}
\opn\lreg{lreg}
\opn\set{set}
\opn\supp{Supp}
\opn\shad{Shad}
\opn\div{div}
\opn\Div{Div}
\opn\cl{cl}
\opn\Cl{Cl}
\opn\Spec{Spec}
\opn\Supp{Supp}
\opn\supp{supp}
\opn\Sing{Sing}
\opn\Ass{Ass}
\opn\Min{Min}
\opn\size{size}
\opn\bigsize{bigsize}
\opn\lex{lex}
\opn\Ann{Ann}
\opn\Rad{Rad}
\opn\Soc{Soc}
\opn\Ker{Ker}
\opn\Coker{Coker}
\opn\Im{Im}
\opn\Hom{Hom}
\opn\Tor{Tor}
\opn\Ext{Ext}
\opn\End{End}
\opn\Aut{Aut}
\opn\id{id}
\opn\nat{nat}
\opn\GL{GL}
\opn\SL{SL}
\opn\mod{mod}
\opn\ord{ord}
\opn\aff{aff}
\opn\con{conv}
\opn\relint{relint}
\opn\st{st}
\opn\lk{lk}
\opn\cn{cn}
\opn\core{core}
\opn\vol{vol}
\opn\gr{gr}
\def\pot#1#2{#1[\kern-0.28ex[#2]\kern-0.28ex]}
\opn\dirlim{\underrightarrow{\lim}}
\opn\invlim{\underleftarrow{\lim}}
\let\tensor=\otimes
\def\pnt{{\raise0.5mm\hbox{\large\bf.}}}
\def\Implies{\ifmmode\Longrightarrow \else
	\unskip${}\Longrightarrow{}$\ignorespaces\fi}
\def\implies{\ifmmode\Rightarrow \else
	\unskip${}\Rightarrow{}$\ignorespaces\fi}
\def\iff{\ifmmode\Longleftrightarrow \else
	\unskip${}\Longleftrightarrow{}$\ignorespaces\fi}
\newtheorem{Theorem}{Theorem}[section]
\newtheorem{Lemma}[Theorem]{Lemma}
\newtheorem{Corollary}[Theorem]{Corollary}
\newtheorem{Remark}[Theorem]{Remark}
\newtheorem{Definition}[Theorem]{Definition}
\let\epsilon=\varepsilon
\let\phi=\varphi
\let\kappa=\varkappa
\numberwithin{equation}{section}
\title{Algebraic Properties of Edge Ideals of Corona Product of Certain Graphs}
\author[Bakhtawar Shaukat$^1$]{Bakhtawar Shaukat$^1$}
\author[Muhammad Ishaq$^1,*$]{Muhammad Ishaq$^{1,*}$}
\author[Ahtsham ul Haq$^1$]{Ahtsham ul Haq$^1$}
\author[Zahid Iqbal$^2$]{Zahid Iqbal$^2$}
\begin{document}
	\maketitle
 \begin{center}
     $^1$School of Natural Sciences, National University of Sciences and Technology Islamabad, Sector H-12, Islamabad Pakistan.\linebreak
     $^2$Department of Mathematics and Statistics, Institute of Southern Punjab, Multan, Pakistan.\linebreak
     $^*$ Corresponding email: \email{ishaq\_maths@yahoo.com}; Tel: +92-51-90855591\\
     Contributing authors emails: bakhtawar.shaukat@sns.nust.edu.pk, ahtsham2192@gmail.com, 786zahidwarraich@gmail.com
     \end{center}
 
  \begin{abstract}
		We study the algebraic invariants namely depth, Stanley depth, regularity and projective dimension of the residue class rings of the edge ideals associated with the corona product of various classes of graphs with any graph. We also give an expression for the Krull dimension of the residue class rings of the edge ideals associated with the corona product of any two graphs. In the end, we discuss when these graphs will be  Cohen-Macaulay.\\\\
		\textbf{Key Words:} Depth; Stanley depth; projective dimension; regularity; Krull dimension; corona product of two graphs.\\ 
		\textbf{2010 Mathematics Subject Classification:} Primary: 13C15; Secondary: 13P10, 13F20.
		%\textbf{2010 Mathematics Subject Classification:} Primary: 13C15, Secondary: 13P10, 13F20.
	\end{abstract}
	\section*{Introduction}
	Let $K$ be a field and  $S:=K[x_1, \dots , x_r]$ denote the polynomial ring in $r$ variables over $K$ with standard grading. For a finitely generated $\mathbb{Z}$-graded $S$-module $A$ with a minimal free resolution 
	$$0\longrightarrow\ \bigoplus_{j\in \ZZ} S(-j)^{\beta_{p,j}(A)} \longrightarrow \bigoplus_{j\in \ZZ}  S(-j)^{\beta_{p-1,j}(A)}
	\longrightarrow \dots 	\longrightarrow\bigoplus_{j\in \ZZ}  S(-j)^{\beta_{0,j}(A)}\longrightarrow A \longrightarrow\ 0,$$
the \textit{regularity}
and \textit{projective dimension} of $A$ are defined  by   
$\reg(A)=\max\{j-i:\beta_{i,j}(A)\neq 0\}$ 
and	  $\pdim(A)=\max\{i:\beta_{i,j}(A)\neq 0\},$  respectively.
%	\begin{equation*}	\pdim(A)=\max\{i:\beta_{i,j}(A)\neq 0\}.\end {equation*}
	%	We say	$A$ has a \textit{$d$-linear resolution} over $S$ if associated to $A$ is a minimal graded free resolution of the form	$$0\longrightarrow S(-d-q)^{\beta_{q}} \longrightarrow \dots \longrightarrow S(-d-1)^{\beta_{1}} \longrightarrow  S(-d)^{\beta_{0}}\longrightarrow A \longrightarrow 0,$$ where $q\leq r.$ In other words, if the graded Betti numbers of $A$ have the property $\beta_{i,j}(A)$=0 for all $ i,j$ such that $j-i \neq d.$ 
Regularity plays a significant role as one of the keys indicators of a module's complexity and is an
important invariant in commutative algebra. 
Values and bounds for the regularity and projective dimension of edge ideals have been studied by a number of researchers; see for instance  \cite{regnpro,regul,faridi,vantyl,hibb}. 	
		%		Also the interplay of algebraic properties of edge ideal and graph invariants is of great interest. Such an interplay can be seen in \cite{ susanmoreytree,moreyvir,book,woodroof}.

  Let $\mathfrak{m}:=(x_1,\dots,x_r)$ be the unique graded maximal ideal of $S$. The depth of $A$ is defined to be the common length of all maximal $M$-sequences contained in  $\mathfrak{m}$, for more details related to depth we refer the readers to \cite{depth}. For   a finitely generated $\mathbb{Z}^{n}$-graded $S$-module $A,$ if $a_{i}\in A$ is a homogeneous element  and  $X_{i}\subset \{x_{1}, \ldots, x_{r}\},$ then  
	$a_{i}K[X_{i}]$ denotes the $K$-subspace of $A$ generated
	by all homogeneous elements of the form $a_it$, where $t$ is a monomial in $K[X_{i}]$. The linear $K$-subspace $a_{i}K[X_{i}]\subset A$ is
	called a Stanley space of dimension $|X_{i}|$ if $a_{i}K[X_{i}]$ is
	a free $K[X_{i}]$-module, where $|X_{i}|$ denotes the number of indeterminates in $X_{i}$.  A Stanley decomposition of $A$ is a	presentation of the $K$-vector space $A$ as a finite direct sum of Stanley spaces
	   $\mathcal{D}: A = \bigoplus_{i = 1}^{r}a_{i}K[X_{i}].$ The Stanley
	depth of decomposition $\mathcal{D}$ and Stanley depth	of $A$ are defined as
	$\sdepth(\mathcal{D}) = \min\{|X_{i}|: i = 1, \ldots, r\}$ and
	$\sdepth(A) = \max\{\sdepth(\mathcal{D}): \text{$\mathcal{D}$~is~a~Stanley~decomposition~of
		~$A$}\},$ respectively. Herzog et al. showed that the Stanley depth of $A = J/I,$
	where $I\subset J\subset S$  can be computed in a finite number of steps by using some posets related to $A$ in \cite{HVZ}. However, it is a difficult task to compute the Stanley depth even by using their
	method. %In the same paper they tried to find $\sdepth(\mathfrak{m})$ by using their method but  they were  unable to find the value in general. However, on the basis of their observation for small values of $r$ they conjecture that $\sdepth(\mathfrak{m})=\lceil\frac{r}{2}\rceil,$ where $\lceil t\rceil,$ $t\in Q$, means the smallest integer which is not less than $t$.   This conjecture was proved by  Biro et al.  in \cite{BH}. 
	For some known results, related to values and bounds of depth and Stanley depth, we refer the readers to \cite{MC8,IQ,ZIA,PFY,bakht}. Stanley in \cite{RP} conjectured  that $\sdepth(A)\geq \depth(A).$  %This conjecture  was interesting in the sense that it compares two invariants of different nature.
	This conjecture was proved for some special cases; see for instance \cite{fakh,fouliii,susanmoreytree,AR1}. Later on, this conjecture was disproved by Duval et al. in \cite{DG} by providing a counterexample of $S/I$, where $I\subset S$ is a monomial ideal.

%The study of  depth, Stanley depth, regularity, projective dimension and Krull dimension of modules  represent the current trend in active area of commutative algebra, see \cite{regul,hibii,susanmoreytree,circulent,woodroof}. 

		For a simple graph $G=(V(G), E(G))$ with vertex set $V(G)=\{x_1, \dots , x_r\}$ and the edge set $E(G),$ we identify the vertices of a graph and variables in $S$. The \textit{edge ideal} of a graph $G$ is defined as $I(G)=(x_{i}x_{j}:\{x_i,x_j\}\in E(G)) \subset S$.  The module $S/I(G)$ is called  Cohen–Macaulay if $\depth(S/I(G))=\dim(S/I(G))$. A graph $G$ is said to be Cohen–Macaulay if $S/I(G)$ is Cohen–Macaulay. Characterization and construction of  Cohen–Macaulay graphs is one of the fundamental  problems with rich literature; see for instance \cite{ asms, asms2,vir2,tehran,imrananwar, circcohen}. 
A broad categorization of Cohen–Macaulay graphs is difficult.	 Villarreal characterized all Cohen–Macaulay trees  \cite{vilareal}. All Cohen–Macaulay chordal graphs are classified by Herzog et al. in \cite{herhib} and later on Herzog and Hibi classified all Cohen–Macaulay bipartite graphs   \cite{hh}. The primary goal of this paper is to compute some algebraic invariants and as a result of our findings, we characterize some Cohen–Macaulay graphs.
		%		This paper is an extension of the work in [3] where the focus is on the second power of the ideal I.  
		%		The study of algebraic property of edge ideals  (e.g., Cohen–Macaulayness) is of current interest. In this paper we will focus on the following algebraic properties: the sequentially Cohen–Macaulay property, the stability of associated primes, and the connection between torsion-freeness and combinatorial problems. The numerical invariants of edge ideals have attracted a great deal of interest [1, 46, 53,77,87,90,91,106,110,117,118]. In this paper we focus on the following invariants: projective dimension, regularity, depth and Krull dimension. We present a few new results on edge ideals. We give a criterion to estimate the regularity of edge ideals (see Theorem 3.14). We apply this criterion to give new proofs of some formulas for the regularity of edge ideals (see Corollary 3.15).

		%	In this paper we focus on the following invariants: depth, Stanley depth, regularity, projective dimension,  and Krull dimension. The study of algebraic property of edge ideals  (Cohen–Macaulayness) is of current interest. This paper is an extension of Villarreal's work (whiskered graph are Cohen-Macaulay) in  \cite{vilareal}   generalizes the notion of whiskering given in Theorem \ref{cohnn} to identify  new Cohen-Macaulay graphs. Corona product of star graph with any graph  also generalizes the results on few mentioned invaiants computed in \cite{naem}.
	
In this paper, we consider the edge ideals associated to the graph  $X\odot H$ called the corona product of $X$ and $H.$   Here $H$  could be any graph and $X$ is either a path, a cycle, a complete graph, a star graph, or a complete bipartite graph. 
Particularly, if $H$ is a null graph, then the corona product $X\odot H$ is a  $|V(H)|$-fold bristled graph. The first section of this paper  contains some background  regarding Graph Theory, relevant useful results from Commutative Algebra   and all the concepts that will be used throughout the paper. In the second section,  we compute depth, projective dimension and lower bound of the Stanley depth for  the residue class rings of the edge ideals of $X\odot H$ in terms of corresponding algebraic invariants for the residue class ring of the edge ideal of $H;$ see for instance Theorem \ref{The1}, Theorem \ref{The3}, Theorem \ref{The119}, Theorem \ref{The1199} and Theorem \ref{The1177}.  In particular, if $H$ is a null graph, then the exact values for Stanley depth are determined. Moreover, we give an explicit formulas for the regularity of residue class rings of the edge ideals of $X\odot H,$ see Theorem \ref{THE2}, Theorem \ref{regcn}, Theorem \ref{regkn}, Theorem \ref{starreg} and Theorem \ref{combi}. Third section is devoted to find an expression for Krull dimension of edge ideal of corona product of any two graphs see Theorem \ref{krulldim}. As a consequence of  this theorem, we  characterize  Cohen-Macaulay $X\odot H$  graphs; see for instance Theorem \ref{cohnn}.

The corona product $X\odot H$ gives rise to a variety of nice graph structures. %Indeed, to do computation by considering $X$ to be an arbitrary class is a challenging task.  In future, if one consider $X$ to be some general class and define more complex structures and compute the algebraic invariants, then our work will serve as a basic cases for them.
 Some authors have already discussed depth and Stanley depth of the cyclic modules associated to some specific classes of corona products of graphs; see for instance \cite{naem}. To find an expression for the algebraic invariants of the  residue class rings of the edge ideals of $X\odot H$ seems to be a challenging task, if $X$ is any arbitrary graph. 
%Computing the aforementioned invariants can be a challenging task, particularly for the  residue class rings of the edge ideals of complex graph structures $X\odot H,$ if $X$ is any graph.
Therefore, we initially focus on fixing the underlying graph $X$ to be some well-known graphs, that allow us to gain insight into the properties of these structures and their associated algebraic invariants.
Our work may help to extend all
the results obtained in this paper to compute the said invariants when $X$ is arbitrary. To this end, we have succeeded in finding an expression for the value of the Krull dimension of the residue class rings of the edge ideal of the graphs $X\odot H$.
		\section{Preliminaries}\label{section1}
		In this section, we recall some notions from Graph Theory, review a few auxiliary findings from Commutative Algebra and  introduce some notations that are needed in the next sections.
		%	Let $K$ be a field and $S:=K[x_1,\dots,x_n]$ the polynomial ring over $K$. We begin by reviewing some standard notation and terminology regarding graphs	and their connections to algebra. By abuse of notation, $x_{i}$ will be used to denote	both the vertex of a graph $G$ and the corresponding variable of the polynomial ring $S.$ Throughout the paper all graphs will be assumed to be simple. Let $G:=(V(G),E(G))$ be a graph with vertex set $V(G)=\{x_1,x_2,\dots,x_r\}$ and edge set $E(G)$. Then the edge ideal $I(G)$ associated to $G$ is the square free monomial ideal of $S$, that is $I(G)=(x_{i}x_{j} \,\,\,|\,\,\, \{x_i,x_j\}\in E(G)).$
		%	Let $G=(V(G),E(G))$ be a graph with vertex set $V(G)$ and edge set $E(G)$. A graph is said to be \textit{simple} if it has no loops and multiple edges.  
		Throughout this paper, all graphs are assumed to be finite, simple and may have isolated vertices. 	For a graph $G$ with $V(G)=\{x_1, \dots, x_n\},$	adding a \textit{whisker} to graph $G$ at $x_i$ means adding a new vertex $y$ to $V(G)$ and edge $\{x_i,y\}$ to $E(G).$ A graph obtained by adding a whisker to each vertex of any given graph is called a \textit{whisker graph}.	A graph $N_{r}$ is said to be a  \textit{null graph} on $r$ vertices if $V(N_{r})=\{x_{1},\dots , x_{r}\}$ and $E(N_{r})=\emptyset$. Moreover, if $r=1$ then $N_1$ is also called a \textit{trivial graph}. The \textit{degree} of a vertex $x_i\in V(G)$ is the number of adjacent vertices to $x_i$ in graph $G$ and  is represented by $\deg_G(x_i)$. Any vertex of degree $1$ is called a \textit{leaf} or \textit{pendant vertex} of $G.$  For any integer $t\geq 1,$ a $t$-\textit{fold bristled graph} of $G$ is formed by attaching $t$ pendant vertices to each vertex of $G$. An  \textit{internal vertex} is a vertex that is not a leaf. A graph  is said to be a \textit{connected graph} if there is a path between any two vertices. For $r\geq 1,$ a \textit{path} $P_{r}$ is a graph on $r$ vertices  %, say $x_{1},\dots,x_{r},$ 
		such that $E(P_{r})=\{\{x_{j},x_{j+1}\}:1\leq j\leq r-1\}$  (if $r=1,$ then $E(P_1)=\emptyset$). For $r\geq 3,$ a \textit{cycle} $C_{r}$ on $r$ vertices is a graph such that $E(C_{r})=\{\{x_{j},x_{j+1}\}:1\leq j\leq r-1\}\cup \{x_{1},x_{r}\}.$ Let $r\geq 1,$ a \textit{complete graph}  $K_{r}$ on  $r$ vertices  is a graph in which each pair of vertices is connected by an edge (if $r=1,$ then $E(K_1)=\emptyset$).	A simple and connected graph $T_{r}$ on $r$ vertices is said to be a \textit{tree}  if there exists a unique path between any two vertices of $T_{r}$. For  $k\geq 1,$  a  \textit{$k$-star} is a tree with $k$ leaves and a single vertex with degree $k.$ We denote a $k$-star by $S_{k}.$ %and if $V(S_{k})=\{x_{1},\dots, x_{k+1}\}$, then  $E(S_{k})=\{\{x_{1},x_{j}\} :2\leq j\leq k+1\}$.  % The minimal set of monomial generators of monomial ideal $I(S_{n+1})$ is $\mathcal{G}(I(S_{n+1}))=\{y_{1}y_{2},y_{1}y_{3},\dots,y_{1}y_{n+1}\}.$	
		A \textit{bipartite graph} is a graph in which the set of vertices is partitioned into two disjoint sets called partite sets such that no two vertices of the graph within the same partite set are adjacent.
		A \textit{complete bipartite graph} is a bipartite graph such that every vertex of one partite set is connected to  each vertex of the other partite set.	%	every pair of graph vertices in the two sets are adjacent. 
	Let $K_{u,v}$ denotes the complete bipartite graph with partite sets $K_u=\{x_{1},\dots, x_{u}\}$ and $K_v=\{x_{u+1},\dots, x_{u+v}\}.$ A vertex $x_j$ is a \textit{neighbor} of a vertex $x_i$ in a graph $G$ if $\{x_i,x_j\}\in E(G).$ The \textit{neighborhood} $N_G(x_i)$ of a vertex $x_i$ is the set of all neighbors of $x_i,$ that is, 	$N_G(x_i):=\{x_j\in V(G)\,\,\,|\,\,\,\{ x_i,x_j\}\in E(G)\}.$ 
		For  graph $G$, a subset $W$ of $V(G)$ is called an \textit{independent set} if no two vertices in $W$ are adjacent. The cardinality of the largest independent
		set of $G$ is called the $independence \ number$ of $G$.	\begin{Lemma}[{\cite[Lemma 1]{dim}}]\label{prok}
			$\dim (S/I(G))=\max\{|W|:\text{ $W$ \text{is an independent set of} $G$}\}.$\end{Lemma}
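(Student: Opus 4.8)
The plan is to reduce the computation of $\dim(S/I(G))$ to a purely combinatorial statement about $G$ via the prime decomposition of the squarefree monomial ideal $I(G)$. First I would recall that every associated prime of a squarefree monomial ideal is again a monomial prime, that is, a prime of the form $\pp_C := (x_i : x_i \in C)$ for some $C \subseteq V(G)$; in particular the minimal primes of $I(G)$ are of this shape. Since $\dim(S/I(G)) = \max\{\dim(S/\pp) : \pp \in \Min(I(G))\}$ and $\dim(S/\pp_C) = r - |C|$, everything comes down to identifying which sets $C$ give $\pp_C \supseteq I(G)$ and then minimizing $|C|$.

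Next I would note that $\pp_C \supseteq I(G)$ precisely when each generator $x_ix_j$ (with $\{x_i,x_j\} \in E(G)$) lies in $\pp_C$, i.e.\ when $x_i \in C$ or $x_j \in C$ for every edge; equivalently, $C$ is a vertex cover of $G$. Hence
\[
\dim(S/I(G)) = r - \min\{|C| : C \text{ is a vertex cover of } G\}.
\]

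Finally I would use the elementary observation that $W \subseteq V(G)$ is an independent set if and only if $V(G)\setminus W$ is a vertex cover of $G$ (no edge lies entirely inside $W$ exactly when every edge meets the complement). This inclusion-reversing bijection gives $\min\{|C| : C \text{ a vertex cover of } G\} = r - \max\{|W| : W \text{ an independent set of } G\}$, and substituting into the displayed identity yields the claim. I do not anticipate a real obstacle; the only step deserving a careful word is the passage to the monomial minimal primes, which is the standard structure theory of squarefree monomial ideals (their radical decomposition into monomial primes indexed by the minimal vertex covers of $G$).
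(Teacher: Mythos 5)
Your argument is correct and complete: the identification of the minimal primes of the squarefree ideal $I(G)$ with monomial primes $\pp_C$, the observation that $\pp_C\supseteq I(G)$ exactly when $C$ is a vertex cover, and the complementation between vertex covers and independent sets together give $\dim(S/I(G))=r-\min\{|C|\}=\max\{|W|\}$, and this is the standard proof of the statement. Note that the paper itself offers no proof here — it simply cites the result from the literature — so your write-up supplies essentially the same argument as the cited source rather than an alternative route.
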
 % \noindent For any module $A$,	an elementary property of depth is the following inequality:\begin{equation}\label{ddim}	\depth(A)\leq \dim(A).	\end{equation}
			\noindent A \textit{subgraph} $H$ of a graph $G$, written as $H \subseteq G$, is a graph such that $V(H) \subseteq V(G)$ and  $E(H) \subseteq E(G)$.
		For a subset $ U\subseteq V(G)$, an \textit{induced subgraph} of $G$ is a graph $G':=(U, E( G'))$, such that $E( G')= \{\{x_i,x_j\}\in E(G): \{x_i,x_j\}\subseteq U\} $. A graph is said to be  \textit{chordal} if it does not contain any induced cycle of length strictly greater than $3$.
	%	Let $G^{c}$ be the complement of the graph $G$ that is the graph with the	same vertex set as $G$ and $\{x_{i}, x_{j}\}$ is an edge of $G^{c}$	if it is not an edge of $G$. 
		A \textit{matching} $M$ in a graph $G$ is a subset  of $E(G)$ in which no two edges are adjacent in $G$. An \textit{induced matching} in $G$ is a matching that forms an induced subgraph of $G$. An $induced \ matching\ number$ of $G$ is denoted as $\indmat(G)$ and  defined as
		  $$\indmat(G)=\max \{|M|: \, \, \text{$M$ is an induced matching in $G$}\}.$$ 
		%	If $G$ is a finite simple graph, Katzman proved in \cite[Lemma 2.2]{kat} that $\indmat(G)$ is a lower bound for the regularity of $S/I(G)$ and then Hà et al. proved in \cite[Corollary  6.9]{hanvan} that regularity of $S/I(G)$ is equal to the $\indmat(G)$ if $G$ is a chordal graph. We combine these results in the following lemma.	[{\cite[Theorems 2.6 and 2.7]{AA}}]
		\begin{Lemma}[{\cite[Corollary  6.9]{hanvan}}]\label{reg1}
		 If $G$ is a \text{chordal graph, then} $\reg(S/I(G))=\indmat(G) .$ 
		\end{Lemma}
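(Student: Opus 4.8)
The plan is to prove the two inequalities $\reg(S/I(G))\ge\indmat(G)$ and $\reg(S/I(G))\le\indmat(G)$ separately; only the second one will use that $G$ is chordal, while the first holds for an arbitrary finite simple graph.

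For the lower bound I would fix an induced matching $M$ of $G$ with $|M|=\indmat(G)=:c$ and set $W:=V(M)$. By the definition of an induced matching, the induced subgraph $G[W]$ is a disjoint union of $c$ edges, so, writing $S_W$ for the polynomial ring on the variables belonging to $W$, we get $S_W/I(G[W])\cong\bigotimes_{i=1}^{c}K[x,y]/(xy)$. Since regularity is additive over tensor products in disjoint sets of variables and $\reg(K[x,y]/(xy))=1$, this yields $\reg(S_W/I(G[W]))=c$. Invoking the standard fact that the regularity of an edge ideal does not decrease when one passes to an induced subgraph, we obtain $\reg(S/I(G))\ge\reg(S_W/I(G[W]))=c=\indmat(G)$.

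For the upper bound I would argue by induction on $|V(G)|$, the edgeless case being trivial. Suppose $G$ has an edge. Since $G$ is chordal, the connected component containing that edge has a simplicial vertex, hence $G$ has a simplicial vertex $v$ with $\deg_G(v)\ge1$; fix a neighbour $u\in N_G(v)$. As $N_G(v)$ is a clique containing $u$, we have $N_G[v]\subseteq N_G[u]$. Now I would use the short exact sequence
\[
0\longrightarrow\bigl(S/(I(G):x_u)\bigr)(-1)\xrightarrow{\,\cdot x_u\,}S/I(G)\longrightarrow S/(I(G)+(x_u))\longrightarrow0
\]
together with the elementary identities $I(G)+(x_u)=(x_u)+I(G\setminus u)$ and $I(G):x_u=(x_w:w\in N_G(u))+I(G\setminus N_G[u])$; since killing a regular sequence of variables leaves the regularity unchanged, the associated long exact sequence in $\Tor$ gives
\[
\reg(S/I(G))\le\max\bigl\{\reg(S/I(G\setminus u)),\ \reg(S/I(G\setminus N_G[u]))+1\bigr\}.
\]
Both $G\setminus u$ and $G\setminus N_G[u]$ are chordal induced subgraphs of $G$ on fewer vertices (note $|N_G[u]|\ge2$), so the induction hypothesis applies, giving $\reg(S/I(G\setminus u))=\indmat(G\setminus u)\le\indmat(G)$ and $\reg(S/I(G\setminus N_G[u]))=\indmat(G\setminus N_G[u])$.

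The whole argument then reduces to the combinatorial claim $\indmat(G\setminus N_G[u])\le\indmat(G)-1$, which I would prove by contradiction: if $N$ is a maximum induced matching of $G$ with $V(N)\cap N_G[u]=\emptyset$, then, using $N_G[v]\subseteq N_G[u]$, neither $u$ nor $v$ lies in $V(N)$ and no vertex of $N$ is adjacent to $u$ or to $v$, so $N\cup\{\{u,v\}\}$ is again an induced matching of $G$, of size $\indmat(G)+1$ --- absurd. Substituting this into the displayed inequality yields $\reg(S/I(G))\le\indmat(G)$, and together with the lower bound the lemma follows. The one genuine idea, and where I expect the main difficulty to lie, is the choice to run the short exact sequence at a \emph{neighbour} of a simplicial vertex rather than at the simplicial vertex itself: that is exactly what makes the combinatorial claim go through (it fails for a simplicial vertex, e.g.\ for the path $P_4$). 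Everything else --- the regularity estimate from the exact sequence, the colon and sum identities for $I(G)$, and the fact that chordality and $\indmat$ are inherited by vertex-deleted subgraphs --- is routine.
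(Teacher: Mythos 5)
Your proposal is correct, but note that the paper itself gives no proof of this lemma: it is quoted verbatim from H\`a--Van Tuyl \cite[Corollary 6.9]{hanvan}, so the comparison is really with the literature rather than with an argument in the text. Your route is the standard modern one and it differs from the original source: H\`a--Van Tuyl obtain the chordal case via splitting edges of the edge ideal, whereas you prove the inequality $\reg(S/I(G))\geq \indmat(G)$ for \emph{all} graphs by restricting to the induced subgraph on the matched vertices (this is Katzman's observation, resting on the fact, via Hochster's formula, that regularity cannot drop when passing to induced subgraphs), and you get the reverse inequality for chordal graphs by induction on $|V(G)|$, using the sequence $0\to (S/(I(G):x_u))(-1)\to S/I(G)\to S/(I(G),x_u)\to 0$ at a \emph{neighbour} $u$ of a simplicial vertex $v$. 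All the ingredients you invoke are sound: the bound $\reg(S/I)\leq\max\{\reg(S/(I:x_u))+1,\ \reg(S/(I,x_u))\}$ follows from the long exact sequence (and is also recoverable from the three cases of Lemma \ref{regul2}), the identities $(I(G),x_u)=(x_u)+I(G\setminus u)$ and $(I(G):x_u)=(x_w: w\in N_G(u))+I(G\setminus N_G[u])$ are elementary, chordality passes to induced subgraphs, and your combinatorial claim $\indmat(G\setminus N_G[u])\leq\indmat(G)-1$ is exactly where $N_G[v]\subseteq N_G[u]$ is needed, since any induced matching avoiding $N_G[u]$ can be enlarged by the edge $\{u,v\}$; your remark that the claim fails at the simplicial vertex itself (e.g.\ $P_4$) correctly identifies the key point of the argument. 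What your approach buys is a self-contained, elementary induction and a lower bound valid for arbitrary graphs; what the cited source's approach buys is machinery (splittings) that also yields Betti-number information beyond the regularity.
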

	%	\begin{Proposition} 	Let $G$ be a finite simple graph. Then \begin{itemize}	\item[(a)] $\reg(S/I(G))\geq \indmat(G),$ {\cite[\text{Lemma }2.2]{kat}}.	\item[(b)] If $G$ is a \text{chordal graph, then} $\reg(S/I(G))=\indmat(G) ,$ {\cite[\text{Corollary }6.9]{hanvan}}. 
				%\item[c] Over any field $k$, $\reg(S/I(G))\leq \cochord(G)$	\end{itemize}	\end{Proposition}
		%	It is well known that one may compute the Krull dimension of $S/I(G)$ by using independent sets.  

	%	\begin{Lemma}[{\cite{5}}]\label{fro}	If $G$ is a graph, then $I(G)$ has a linear	resolution if and only if $G^{c}$ is a chordal graph.	\end{Lemma}
		%A.Rauf proved the following lemma for Stanley depth.
		%[{\cite[Lemma 2.2]{AR1}}]
	%	\noindent Note that for any homogeneous ideal $I$ of $S$, $\reg (S/I)=\reg (I)-1$ and $\pdim (S/I)=\pdim (I)+1$.
\noindent	Next we present a few results,  these results will play a key role in the proofs of our main results.
	\begin{Lemma}[{\cite[Theorem 2.6  and Theorem 2.7]{AA}}]\label{leAli}
			If $r\geq 2$ and $I(S_{r-1})\subset S=K[V(S_{r})]$, then $$\depth(S/I(S_{r-1}))=\sdepth(S/I(S_{r-1}))=1.$$
		\end{Lemma}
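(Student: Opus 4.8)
The plan is to argue from an explicit monomial description of $S/I(S_{r-1})$. Fix $S=K[x_1,\dots,x_r]$ to be the polynomial ring on the vertex set of $S_{r-1}$, labelled so that $x_1$ is the centre (the vertex of degree $r-1$) and $x_2,\dots,x_r$ are the leaves; the hypothesis $r\geq2$ guarantees that $S_{r-1}$ has at least one leaf. Then $I:=I(S_{r-1})=(x_1x_2,\dots,x_1x_r)=x_1\cdot(x_2,\dots,x_r)$, with (irredundant) primary decomposition $I=(x_1)\cap(x_2,\dots,x_r)$, so that $\Ass(S/I)=\{(x_1),\,(x_2,\dots,x_r)\}$. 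I would deduce the lower bounds for $\depth$ and $\sdepth$ from a single Stanley decomposition, and the matching upper bounds from this primary decomposition together with a ``trapped variable'' observation.

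For the lower bounds: a monomial of $S$ avoids $I$ precisely when it is not divisible by $x_1$, or else is a positive power of $x_1$, so as a $K$-vector space $S/I$ carries the Stanley decomposition
\[
S/I=K[x_2,\dots,x_r]\ \oplus\ x_1K[x_1] .
\]
Hence $\sdepth(S/I)\geq\min\{r-1,\,1\}=1$. Moreover the graded maximal ideal $\mathfrak m=(x_1,\dots,x_r)$ is not associated to $S/I$, so $\mathfrak m$ contains a nonzerodivisor on $S/I$ and $\depth(S/I)\geq1$. (Equivalently, one may apply the depth lemma to the short exact sequence $0\to (x_1)/I\to S/I\to S/(x_1)\to0$, noting that multiplication by $x_1$ identifies $(x_1)/I$ with $S/(x_2,\dots,x_r)\cong K[x_1]$ up to a degree shift, so $(x_1)/I$ has depth $1$ while $S/(x_1)=K[x_2,\dots,x_r]$ has depth $r-1$.)

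For the upper bounds: since $(x_2,\dots,x_r)\in\Ass(S/I)$, one has $\depth(S/I)\leq\dim\bigl(S/(x_2,\dots,x_r)\bigr)=1$, so in fact $\depth(S/I)=1$. For the Stanley depth, note that $x_1x_j\in I$ for every $j\in\{2,\dots,r\}$; hence in any Stanley decomposition $S/I=\bigoplus_i u_iK[Z_i]$ the summand carrying the nonzero class of the monomial $x_1$ must satisfy $u_i\mid x_1$ and $Z_i\subseteq\{x_1\}$, and so has dimension at most $1$. Therefore $\sdepth(S/I)\leq1$, and combining with the lower bound gives $\sdepth(S/I)=1$.

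I do not expect a genuine obstruction here --- the computation is short. The only step that is not essentially immediate is the upper bound $\sdepth(S/I)\leq1$: it rests on the observation that the class of $x_1$ in $S/I$ is ``trapped'', its sole nonzero monomial multiples being the pure powers $x_1^{a}$, so it cannot lie inside a Stanley space of dimension larger than $1$.
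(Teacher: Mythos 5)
Your argument is correct, but note that the paper does not prove this lemma at all: it is imported verbatim as a citation to Theorems 2.6 and 2.7 of Alipour--Tehranian \cite{AA}, so there is no internal proof to compare against. Your proposal is a valid self-contained substitute: the decomposition $S/I=K[x_2,\dots,x_r]\oplus x_1K[x_1]$ is indeed a Stanley decomposition (the monomials outside $I=x_1(x_2,\dots,x_r)$ are exactly those listed), giving $\sdepth\geq 1$ and, via the nonzerodivisor coming from $\mathfrak m\notin\Ass(S/I)=\{(x_1),(x_2,\dots,x_r)\}$, also $\depth\geq 1$; the upper bound $\depth\leq\dim S/(x_2,\dots,x_r)=1$ from the embedded-in-$\Ass$ prime is the standard Bruns--Herzog bound, and your ``trapped variable'' argument for $\sdepth\leq 1$ is sound, since any Stanley space containing the class of $x_1$ must have $u_i\mid x_1$ and can admit no leaf variable in $Z_i$ without meeting $I$. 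A marginally shorter route to both upper bounds, more in the spirit of the toolkit this paper actually quotes, is to colon by the centre: $S/(I\colon x_1)\cong K[x_1]$ has depth and Stanley depth $1$, so Lemma \ref{Cor7} gives $\depth(S/I)\leq 1$ and $\sdepth(S/I)\leq 1$ at once. Finally, you silently corrected a notational glitch in the statement: read literally with the paper's convention ($S_k$ has $k+1$ vertices), $K[V(S_r)]$ would carry an extra free variable and force depth and Stanley depth $2$; your reading, with the ring exactly on $V(S_{r-1})$, is the one consistent with how the lemma is applied in Lemma \ref{triv}, and is clearly what is intended.
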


%\noindent The next lemma combines the two findings for depth and Stanley depth from {\cite[Proposition 1.2.9]{depth}} and {\cite[Lemma 2.2]{AR1}}, respectively. 
		\begin{Lemma}\label{rle2}	
			If $0\longrightarrow X\longrightarrow Y \longrightarrow Z \longrightarrow 0$ is an exact sequence of $\mathbb{Z}^n$-graded $S$-module, then
			\begin{itemize}
			    \item[(a)]  $\depth (Y) \geq \min\{\depth(X), \depth(Z)\},$ {\cite[Proposition 1.2.9]{depth}}.
			     \item[(b)]  $\sdepth(Y)\geq\min\{\sdepth(X), \sdepth(Z)\},$  {\cite[Lemma 2.2]{AR1}}.
			\end{itemize}
		\end{Lemma}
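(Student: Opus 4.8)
The plan is to treat the two parts separately: part (a) follows from the standard homological description of depth combined with the long exact $\Ext$ sequence, while part (b) follows from a direct construction of a Stanley decomposition of $Y$ from optimal ones for $X$ and $Z$.

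For (a), recall that for a finitely generated $\mathbb{Z}^n$-graded $S$-module $M$ one has $\depth(M)=\min\{i:\Ext^i_S(S/\mathfrak{m},M)\neq 0\}$, equivalently the least $i$ with $H^i_{\mathfrak{m}}(M)\neq 0$. Applying $\Hom_S(S/\mathfrak{m},-)$ to the exact sequence $0\to X\to Y\to Z\to 0$ produces the long exact sequence
$$\cdots \to \Ext^i_S(S/\mathfrak{m},X)\to \Ext^i_S(S/\mathfrak{m},Y)\to \Ext^i_S(S/\mathfrak{m},Z)\to \cdots.$$
Put $d=\min\{\depth(X),\depth(Z)\}$. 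For each $i<d$ both outer terms vanish, hence so does the middle term; therefore the least $i$ with $\Ext^i_S(S/\mathfrak{m},Y)\neq 0$ is at least $d$, i.e.\ $\depth(Y)\geq d$. This step is routine once the graded version of the characterization is invoked.

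For (b), identify $X$ with a graded submodule of $Y$ so that $Z\cong Y/X$, and write $\pi\colon Y\to Z$ for the quotient map. Fix Stanley decompositions $\mathcal{D}_X\colon X=\bigoplus_i u_iK[A_i]$ and $\mathcal{D}_Z\colon Z=\bigoplus_j v_jK[B_j]$ with $\sdepth(\mathcal{D}_X)=\sdepth(X)$ and $\sdepth(\mathcal{D}_Z)=\sdepth(Z)$, and for each $j$ choose a homogeneous lift $\widetilde{v_j}\in Y$ with $\pi(\widetilde{v_j})=v_j$. I would then show that $\mathcal{D}_Y\colon Y=\bigoplus_i u_iK[A_i]\oplus\bigoplus_j \widetilde{v_j}K[B_j]$ is a Stanley decomposition of $Y$. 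Directness and exhaustiveness are checked degree by degree: the monomials $u_is$ (with $s\in K[A_i]$) form a $K$-basis of $X=\Ker\pi$, while the monomials $\widetilde{v_j}t$ (with $t\in K[B_j]$) are carried by the degree-preserving surjection $\pi$ bijectively onto the $K$-basis $\{v_jt\}$ of $Z$, so a short spanning-and-independence argument shows their union is a $K$-basis of $Y$. Freeness of each $\widetilde{v_j}K[B_j]$ over $K[B_j]$ follows since $\widetilde{v_j}f=0$ forces $v_jf=\pi(\widetilde{v_j}f)=0$, and then freeness of $v_jK[B_j]$ gives $f=0$. Consequently $\sdepth(Y)\geq\sdepth(\mathcal{D}_Y)=\min\{\sdepth(\mathcal{D}_X),\sdepth(\mathcal{D}_Z)\}=\min\{\sdepth(X),\sdepth(Z)\}$.

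The only genuinely nontrivial point — and hence the main obstacle — is the verification in (b) that the lifted pieces $\widetilde{v_j}K[B_j]$ are free $K[B_j]$-modules and that the proposed sum is direct; both reduce to careful bookkeeping with $K$-bases in each $\mathbb{Z}^n$-graded component, using that $\pi$ preserves degrees and is surjective with kernel $X$. Part (a) is purely formal once the $\Ext$/local-cohomology description of depth is in place.
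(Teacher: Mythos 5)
Your proposal is correct. The paper does not prove this lemma at all --- it simply cites \cite[Proposition 1.2.9]{depth} for (a) and \cite[Lemma 2.2]{AR1} for (b) --- and your two arguments are precisely the standard proofs found in those references: the long exact sequence of $\Ext^i_S(S/\mathfrak{m},-)$ for the depth inequality, and Rauf's lifting of a Stanley decomposition of $Z$ through the degree-preserving surjection $\pi$, combined with one of $X=\Ker\pi$, for the Stanley depth inequality. The verification you flag as the main point (freeness of the lifted spaces and directness of the sum, checked via $\pi$) is exactly the content of the cited lemma's proof, so nothing is missing.
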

		
%\noindent The following result was proved for depth by Rauf {\cite[Corollary 1.3]{AR1}},  while  a similar result for Stanley depth was proved by Cimpoeas {\cite[Proposition 2.7]{MC}}.
		\begin{Lemma}\label{Cor7}
			Let $I\subset S$ be a monomial ideal and $f$ be a monomial in $S$ such that $f\notin I.$ Then
			\begin{itemize}
			    \item[(a)] $\depth (S/(I : f))\geq \depth(S/I),$ {\cite[Corollary 1.3]{AR1}}.
			    
			    \item[(b)] $\sdepth (S/(I : f))\geq \sdepth(S/I),$ {\cite[Proposition 2.7]{MC}}.
			\end{itemize}
		\end{Lemma}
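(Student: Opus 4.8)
The plan is to argue the two inequalities differently. For (a) I would first reduce to the case where $f=x_i$ is a single variable: since $(I:fg)=\big((I:f):g\big)$, if $f=x_{i_1}\cdots x_{i_k}$ then $(I:f)$ is obtained from $I$ by forming colons successively with $x_{i_1},\dots,x_{i_k}$, and at each stage the variable being divided out is not in the current ideal (otherwise a divisor of $f$, hence $f$ itself, would lie in $I$), so it suffices, by iteration, to treat $f=x_i$. For a variable I would use the short exact sequence of $\mathbb{Z}^{n}$-graded modules
$$0\longrightarrow S/(I:x_i)\ \xrightarrow{\ \cdot x_i\ }\ S/I\longrightarrow S/(I,x_i)\longrightarrow 0 ,$$
to which the depth lemma \cite[Proposition~1.2.9]{depth}, in the form $\depth(X)\ge\min\{\depth(Y),\depth(Z)+1\}$ for $0\to X\to Y\to Z\to 0$, applies to give $\depth\big(S/(I:x_i)\big)\ge\min\big\{\depth(S/I),\,\depth\big(S/(I,x_i)\big)+1\big\}$. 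Thus (a) follows once one has the auxiliary estimate $\depth\big(S/(I,x_i)\big)\ge\depth(S/I)-1$, which I would prove by writing $S=S'[x_i]$ with $S'=K[\,x_j:j\neq i\,]$, so that $S/(I,x_i)\cong S'/(I\cap S')$, and inducting on the number of variables, using the standard description of $\depth(S/I)$ over $S'[x_i]$ in terms of the chain $I\cap S'\subseteq(I:x_i)\cap S'\subseteq(I:x_i^{2})\cap S'\subseteq\cdots$ of monomial ideals of $S'$ (the case $n=1$ being immediate); this is carried out in \cite[Corollary~1.3]{AR1}.

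For (b) the idea is to transport a Stanley decomposition of $S/I$ to one of $S/(I:f)$. Fix a Stanley decomposition $\mathcal{D}\colon S/I=\bigoplus_{i=1}^{t}u_iK[Z_i]$ with $\sdepth(\mathcal{D})=\sdepth(S/I)$, and use the degree-shifting $K$-linear isomorphism $S/(I:f)\xrightarrow{\ \sim\ }(fS+I)/I$, $\overline{v}\mapsto\overline{fv}$, which matches the monomial basis of $S/(I:f)$ with the set $B$ of monomials of $S/I$ divisible by $f$. Writing $f=\prod_j x_j^{a_j}$, the space $u_iK[Z_i]$ meets $B$ precisely when $a_j\le\deg_{x_j}(u_i)$ for every $x_j\notin Z_i$, and when this holds the monomials of $u_iK[Z_i]$ lying in $B$ are exactly those of the Stanley space $u_iw_iK[Z_i]$, where $w_i:=\prod_{x_j\in Z_i}x_j^{\max\{0,\,a_j-\deg_{x_j}(u_i)\}}\in K[Z_i]$ and $f\mid u_iw_i$. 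Pulling these back through the isomorphism gives the Stanley decomposition
$$S/(I:f)=\bigoplus_{i\in\Lambda}\frac{u_iw_i}{f}\,K[Z_i],\qquad \Lambda=\big\{\,i:\ a_j\le\deg_{x_j}(u_i)\ \text{for all}\ x_j\notin Z_i\,\big\}.$$
Since $f\notin I$, the monomial $f$ itself lies in some $u_iK[Z_i]$ and that index belongs to $\Lambda$, so $\Lambda\neq\emptyset$ and
$$\sdepth\big(S/(I:f)\big)\ \ge\ \min_{i\in\Lambda}|Z_i|\ \ge\ \min_{1\le i\le t}|Z_i|\ =\ \sdepth(S/I);$$
this is the argument of \cite[Proposition~2.7]{MC}.

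The step I expect to be the genuine obstacle is the auxiliary estimate $\depth\big(S/(I,x_i)\big)\ge\depth(S/I)-1$ used for (a). The two short exact sequences linking $S/(I:x_i)$, $S/I$ and $S/(I,x_i)$ determine this estimate and statement (a) from one another, so the circularity can only be broken by exploiting the monomial structure — concretely, through the description of $\depth(S/I)$ over the subring $S'[x_i]$ indicated above — and not by the depth lemma alone.
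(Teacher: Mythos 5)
The paper does not actually prove this lemma: both parts are quoted from the literature (part (a) from \cite[Corollary 1.3]{AR1}, part (b) from \cite[Proposition 2.7]{MC}), so there is no in-paper argument to compare yours against; your proposal has to stand on its own. Your part (b) does: the reduction of the monomial basis of $S/(I:f)$ to the monomials of $S/I$ divisible by $f$, the identification of the divisible monomials inside each Stanley space $u_iK[Z_i]$ with those of $u_iw_iK[Z_i]$, and the freeness of the pulled-back spaces $\frac{u_iw_i}{f}K[Z_i]$ are all correct, and this is essentially the argument of the cited proposition of Cimpoeas. The reduction in (a) to $f=x_i$ and the use of the depth inequality $\depth(X)\geq\min\{\depth(Y),\depth(Z)+1\}$ for $0\to X\to Y\to Z\to 0$ are also fine.

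The gap is exactly where you locate it: part (a) rests entirely on the auxiliary estimate $\depth\big(S/(I,x_i)\big)\geq\depth(S/I)-1$, and your proposal does not prove it. The two short exact sequences relating $S/(I:x_i)$, $S/I$ and $S/(I,x_i)$ only show that this estimate and statement (a) are equivalent, so invoking \cite[Corollary 1.3]{AR1} at this point is circular — that corollary \emph{is} statement (a). The estimate is true and is where the monomial structure must enter; it can be extracted, for instance, from the decomposition of $S/I$ as an $S'$-module, $S'=K[x_j: j\neq i]$, into the slices $x_i^{j}\big(S'/((I:x_i^{j})\cap S')\big)$ attached to the chain $I\cap S'\subseteq (I:x_i)\cap S'\subseteq (I:x_i^{2})\cap S'\subseteq\cdots$, or quoted from the literature on depth of monomial ideals (e.g.\ the results of \cite{regul} around the paper's Lemma \ref{exacther}). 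As written, however, your sketch of this induction is not carried out, so (a) is a proof modulo a nontrivial known inequality rather than a complete argument, whereas (b) is complete.
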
 
%	\noindent The conclusion of {\cite[Corollary 3.3(i)]{regul}} is the next result and it also holds for an arbitrary monomial $f.$  	This result is also proved for squarefree monomial ideals in {\cite[Lemma 5.1]{dao}}. Their proof is based on a result of Kummini {\cite{kummini}} on multigraded Betti numbers of squarefree monomial ideals.
		\begin{Lemma}[{\cite[Theorem 4.3]{regul}}]\label{exacther}
		    Let $I$ be a monomial ideal and let $f$ be an arbitrary monomial in $S.$ Then 
		    $$\depth(S/I)=\depth(S/(I:f))  \,\,\text{if}\,\, \depth(S/(I,f))\geq \depth(S/(I:f)).$$
		\end{Lemma}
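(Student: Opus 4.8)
The statement to prove is Lemma~\ref{exacther}: if $\depth(S/(I,f))\geq\depth(S/(I:f))$, then $\depth(S/I)=\depth(S/(I:f))$. The natural starting point is the short exact sequence
\begin{equation}\label{eq:sesplan}
0\longrightarrow S/(I:f)\stackrel{\cdot f}{\longrightarrow} S/I\longrightarrow S/(I,f)\longrightarrow 0,
\end{equation}
where the first map is multiplication by $f$ (well-defined and injective precisely because $(I:f)=\{g: gf\in I\}$), and the cokernel is $S/(I+(f))=S/(I,f)$. The plan is to extract the two inequalities $\depth(S/I)\geq\depth(S/(I:f))$ and $\depth(S/I)\leq\depth(S/(I:f))$ from this sequence together with the hypothesis.

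For the lower bound, I would apply Lemma~\ref{rle2}(a) to \eqref{eq:sesplan}: $\depth(S/I)\geq\min\{\depth(S/(I:f)),\depth(S/(I,f))\}$, and since by hypothesis $\depth(S/(I,f))\geq\depth(S/(I:f))$, the minimum equals $\depth(S/(I:f))$, giving $\depth(S/I)\geq\depth(S/(I:f))$. (Alternatively this is just Lemma~\ref{Cor7}(a).) For the upper bound, I would use the standard depth lemma in its other form: for a short exact sequence $0\to A\to B\to C\to 0$ one has $\depth(A)\geq\min\{\depth(B),\depth(C)+1\}$. Applying this to \eqref{eq:sesplan} with $A=S/(I:f)$, $B=S/I$, $C=S/(I,f)$ gives $\depth(S/(I:f))\geq\min\{\depth(S/I),\depth(S/(I,f))+1\}$. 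If the minimum were $\depth(S/I)$ we are done with the reverse inequality. Otherwise $\depth(S/(I:f))\geq\depth(S/(I,f))+1>\depth(S/(I,f))$, which together with the hypothesis $\depth(S/(I,f))\geq\depth(S/(I:f))$ forces a contradiction; hence the minimum is $\depth(S/I)$ and $\depth(S/(I:f))\geq\depth(S/I)$. Combining the two bounds yields the equality.

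The main subtlety — really the only one — is justifying the upper-bound depth inequality $\depth(A)\geq\min\{\depth(B),\depth(C)+1\}$ for a short exact sequence, which is the companion of the cited Lemma~\ref{rle2}(a) and follows from the long exact sequence of local cohomology (or $\Ext$) modules associated to \eqref{eq:sesplan}; since the paper freely uses depth-chasing along exact sequences, I would simply cite it as the standard depth lemma (e.g.\ \cite[Proposition~1.2.9]{depth} read in the appropriate form) rather than reprove it. One should also check at the outset that $f\notin I$ guarantees $S/(I:f)\neq 0$ so that its depth is defined and the argument is not vacuous; this is immediate since $f\notin I$ means $1\notin(I:f)$, i.e.\ $(I:f)\neq S$. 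No other obstacles are expected; the proof is a two-line application of the depth lemma to \eqref{eq:sesplan} in both directions.
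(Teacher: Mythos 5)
Your proposal is correct, and its skeleton (the short exact sequence $0\to S/(I:f)\xrightarrow{\cdot f} S/I\to S/(I,f)\to 0$ plus a depth chase) is exactly how the paper handles this kind of statement: the paper itself does not reprove Lemma~\ref{exacther} (it is quoted from \cite[Theorem 4.3]{regul}), but its proof of the Stanley-depth analogue, Lemma~\ref{exacthersdepth}, is the template. The one genuine difference is in the upper bound. The paper's route gets $\depth(S/I)\le\depth(S/(I:f))$ in one line from Lemma~\ref{Cor7}(a), which is already available in the text; you instead invoke the stronger form of the depth lemma, $\depth(A)\ge\min\{\depth(B),\depth(C)+1\}$, and run a short contradiction using the hypothesis. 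That works and is a standard local-cohomology fact, but it imports a lemma the paper never states, and the hypothesis is not even needed for this direction, since Lemma~\ref{Cor7}(a) gives the inequality unconditionally. Relatedly, your parenthetical ``(Alternatively this is just Lemma~\ref{Cor7}(a))'' is misplaced: Lemma~\ref{Cor7}(a) says $\depth(S/(I:f))\ge\depth(S/I)$, i.e.\ it is the \emph{upper} bound on $\depth(S/I)$, not the lower bound you attach it to; the lower bound really does come from Lemma~\ref{rle2}(a) together with the hypothesis, as in your main argument. With that parenthetical moved to the second half, your proof collapses to the paper's two-line argument.
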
 \noindent We have a similar result for Stanley depth in the next lemma.
			\begin{Lemma}\label{exacthersdepth}
		    Let $I$ be a monomial ideal and let $f$ be a monomial in $S$ such that $f\notin I.$ Then 
		    $$\sdepth(S/I)=\sdepth(S/(I:f))  \,\,\text{if}\,\, \sdepth(S/(I,f))\geq \sdepth(S/(I:f)).$$
		\end{Lemma}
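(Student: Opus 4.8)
The plan is to mimic the proof of Lemma \ref{exacther}, replacing depth by Stanley depth and the additivity of depth along short exact sequences by Lemma \ref{rle2}(b). The starting point is the standard short exact sequence of $\mathbb{Z}^n$-graded $S$-modules
$$0 \longrightarrow S/(I:f)(-\mathbf{a}) \xrightarrow{\ \cdot f\ } S/I \longrightarrow S/(I,f) \longrightarrow 0,$$
where $\mathbf{a}\in\mathbb{Z}^n$ denotes the multidegree of the monomial $f$. Here the first map is multiplication by $f$: it is well defined and injective precisely because $fg\in I \iff g\in(I:f)$, and its cokernel is $S/(I,f)$; the multidegree shift $(-\mathbf{a})$ only makes the map degree-preserving and does not affect Stanley depth. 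Since $f\notin I$, we have $I\neq S$, $(I:f)\neq S$, and $f\notin I$, so all three modules $S/I$, $S/(I:f)$, $S/(I,f)$ are nonzero and all Stanley depths in sight are defined.

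Next I would apply Lemma \ref{rle2}(b) to this sequence, obtaining
$$\sdepth(S/I)\ \geq\ \min\bigl\{\sdepth(S/(I:f)),\ \sdepth(S/(I,f))\bigr\}.$$
Under the hypothesis $\sdepth(S/(I,f))\geq \sdepth(S/(I:f))$, the right-hand minimum equals $\sdepth(S/(I:f))$, and hence $\sdepth(S/I)\geq \sdepth(S/(I:f))$.

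For the reverse inequality I would invoke Lemma \ref{Cor7}(b) directly: since $f$ is a monomial of $S$ with $f\notin I$, it yields $\sdepth(S/(I:f))\geq \sdepth(S/I)$. Combining the two inequalities gives $\sdepth(S/I)=\sdepth(S/(I:f))$, which is the assertion of Lemma \ref{exacthersdepth}.

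There is essentially no serious obstacle here: the argument is the formal Stanley-depth shadow of \cite[Theorem 4.3]{regul}, and every ingredient has already been recorded above. The only points needing a moment of care are verifying that multiplication by $f$ really is an injection of $S/(I:f)$ (up to shift) onto $(I,f)/I$ with cokernel $S/(I,f)$, and noting that one may disregard the $\mathbb{Z}^n$-grading shift when reading off Stanley depth — both are routine.
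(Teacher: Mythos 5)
Your proposal is correct and is essentially identical to the paper's proof: the same short exact sequence $0\to S/(I:f)\xrightarrow{\cdot f} S/I\to S/(I,f)\to 0$, Lemma \ref{rle2}(b) for the lower bound, and Lemma \ref{Cor7}(b) for the upper bound. The only difference is that you spell out the multidegree shift and the exactness check, which the paper leaves implicit.
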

		\begin{proof} 
		   Consider the short exact sequence $$0\longrightarrow S/(I:f)\xrightarrow{\,\,.f\,} S/I \longrightarrow S/(I,f) \longrightarrow 0.$$  By Lemma \ref{rle2}, $\sdepth (S/I) \geq \min\{\sdepth(S/(I:f)), \sdepth(S/(I,f))\}.$ If $\sdepth(S/(I,f))\geq \sdepth(S/(I:f)),$ then $\sdepth(S/I)\geq \sdepth(S/(I:f)).$  By Lemma \ref{Cor7}, we get $\sdepth(S/I)\leq \sdepth(S/(I:f)).$ Thus the required result follows.
		\end{proof}
	%	In the next lemma, we recall an inequalities for depth from Depth Lemma {\cite[Proposition 1.2.9]{depth}} and {\cite[Corollary 1.3]{AR1}}.  A similar results for Stanley depth proved by Rauf {\cite[Lemma 2.2]{AR1}} and in {\cite[Proposition 2.7]{MC}}. 
%\noindent An intriguing aspect of depth and Stanley depth is that they increase as new variables are added to the ring  {\cite[Lemma 3.6]{HVZ}}, but regularity stays the same {\cite[Lemma 3.5]{moreyvir}}. The following lemma provides these findings.
%	\noindent We know that $\depth(S)=\sdepth(S)=r$ and $\reg(S)=0$.
		\begin{Lemma}\label{le3}
			Let $I\subset S$ be a monomial ideal, and $\hat{S}=S \tensor_{K} K[x_{r+1}]$. Then 
			\begin{itemize}
			    \item[(a)] $\depth(\hat{S}/I)=\depth(S/I)+1$  and $\sdepth(\hat{S}/I)=\sdepth(S/I)+1,$  {\cite[Lemma 3.6]{HVZ}}.
			    \item[(b)] $\reg({\hat{S}/I})=\reg({S/I}),$ {\cite[Lemma 3.6]{moreyvir}}.
			\end{itemize}
		\end{Lemma}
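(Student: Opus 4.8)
Since Lemma \ref{le3} is recorded from the literature, the plan is just to recall the short arguments behind each part. The starting point is the identification $\hat{S}/I\hat{S}\cong (S/I)\otimes_K K[x_{r+1}]$, so that $\hat{S}/I$ (with $I$ read as the extended ideal) is nothing but the one-variable polynomial extension of $S/I$.

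For part (b) and for projective dimension I would use flat base change: tensoring a minimal graded free $S$-resolution of $S/I$ with $K[x_{r+1}]$ over $K$ produces a graded free $\hat{S}$-resolution of $\hat{S}/I$, and it stays minimal because the entries of its differentials are homogeneous elements of $\mm$, hence lie in the graded maximal ideal $\mm\hat{S}+(x_{r+1})$ of $\hat{S}$. This yields $\beta_{i,j}^{\hat{S}}(\hat{S}/I)=\beta_{i,j}^{S}(S/I)$ for all $i,j$, from which $\reg(\hat{S}/I)=\reg(S/I)$ and $\pdim(\hat{S}/I)=\pdim(S/I)$ are immediate, since both invariants are read off exactly these Betti numbers. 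The depth equality in (a) then follows from the Auslander--Buchsbaum formula, since $\depth(\hat{S}/I)=(r+1)-\pdim(\hat{S}/I)=(r+1)-\pdim(S/I)=\depth(S/I)+1$.

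For the Stanley depth statement in (a) one inequality is purely formal: from a Stanley decomposition $S/I=\bigoplus_i a_i K[X_i]$ I would build the Stanley decomposition $\hat{S}/I=\bigoplus_i a_i K[X_i\cup\{x_{r+1}\}]$, giving $\sdepth(\hat{S}/I)\ge \sdepth(S/I)+1$. The reverse bound $\sdepth(\hat{S}/I)\le \sdepth(S/I)+1$ is the only genuine obstacle: here I would pass to the Herzog--Vladoiu--Zheng poset attached to $\hat{S}/I$ (or restrict an optimal Stanley decomposition to its $x_{r+1}$-degree-zero part) in order to extract a Stanley decomposition of $S/I$ whose Stanley depth drops by at most one. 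Everything apart from this descent step is bookkeeping with resolutions and Stanley spaces, which is why the authors simply cite the statement.
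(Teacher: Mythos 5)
The paper offers no proof of this lemma at all---it is quoted directly from \cite{HVZ} and \cite{moreyvir}---and your sketch correctly reconstructs the standard arguments behind both parts: flat base change $\hat{S}=S\tensor_K K[x_{r+1}]$ keeps a minimal free resolution minimal, so the graded Betti numbers and hence $\reg$ and $\pdim$ are unchanged, Auslander--Buchsbaum then gives the depth shift, and adjoining $x_{r+1}$ to every Stanley space gives $\sdepth(\hat{S}/I)\geq \sdepth(S/I)+1$. The step you single out as the only genuine obstacle is in fact closed by your own parenthetical suggestion: restricting an optimal Stanley decomposition of $\hat{S}/I$ to its $x_{r+1}$-degree-zero part keeps exactly the summands $a_iK[X_i]$ with $\deg_{x_{r+1}}a_i=0$ and replaces each by $a_iK[X_i\setminus\{x_{r+1}\}]$, which is a Stanley decomposition of $S/I$ losing at most one variable per space, so $\sdepth(\hat{S}/I)\leq \sdepth(S/I)+1$; this is essentially the argument of the cited reference, so no appeal to the poset machinery is needed.
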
 	

\begin{Lemma}[{\cite[Proposition 2.2.20]{book}}]\label{virtens}
     Let $1\leq n < r.$   If $S_{1}=K[x_{1},\dots, x_{n}]$ and $S_{2}=K[x_{n+1},\dots, x_{r}].$ Then  $S/(I+J)\cong S_{1}/I \tensor_{K}S_{2}/J.$ 
\end{Lemma}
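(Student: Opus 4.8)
The plan is to realise $S$ as the tensor product $S_{1}\otimes_{K}S_{2}$ over $K$ and then to compute $(S_{1}/I)\otimes_{K}(S_{2}/J)$ as an iterated residue ring of $S$. First I would fix the $K$-algebra isomorphism $S\cong S_{1}\otimes_{K}S_{2}$ determined by $x_{i}\mapsto x_{i}\otimes 1$ for $1\le i\le n$ and $x_{i}\mapsto 1\otimes x_{i}$ for $n<i\le r$, and agree to write $I$, $J$ also for the ideals they generate in $S$. Under this identification every element of $S$ is a $K$-linear combination of products $uv$ with $u\in S_{1}$ and $v\in S_{2}$; using that $I$ is an ideal of $S_{1}$ and $J$ an ideal of $S_{2}$, I would check that the ideal $IS$ is exactly the image of the canonical map $I\otimes_{K}S_{2}\To S$, and symmetrically that $JS$ is the image of $S_{1}\otimes_{K}J\To S$.

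Next I would apply right-exactness of the functor $-\otimes_{K}S_{2}$ to the presentation $I\To S_{1}\To S_{1}/I\To 0$, obtaining the exact sequence $I\otimes_{K}S_{2}\To S\To (S_{1}/I)\otimes_{K}S_{2}\To 0$, which gives $(S_{1}/I)\otimes_{K}S_{2}\cong S/IS$. Applying right-exactness of $(S_{1}/I)\otimes_{K}-$ to $J\To S_{2}\To S_{2}/J\To 0$ yields $(S_{1}/I)\otimes_{K}J\To (S_{1}/I)\otimes_{K}S_{2}\To (S_{1}/I)\otimes_{K}(S_{2}/J)\To 0$; transporting the middle term along the previous isomorphism, the image of $(S_{1}/I)\otimes_{K}J$ in $S/IS$ is $(IS+JS)/IS$, so that
$$(S_{1}/I)\otimes_{K}(S_{2}/J)\cong (S/IS)/\bigl((IS+JS)/IS\bigr)\cong S/(IS+JS)=S/(I+J),$$
which is the desired isomorphism.

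The step I expect to require the most care is the elementary but necessary identification of $I\otimes_{K}S_{2}$ (resp.\ $S_{1}\otimes_{K}J$) with the extended ideal $IS$ (resp.\ $JS$) inside $S=S_{1}\otimes_{K}S_{2}$; this is exactly where one uses that $I,J$ are ideals of the respective subrings and that $K$ is a field, so that $S_{1},S_{2}$ are flat (indeed free) as $K$-modules. As an alternative to the right-exactness argument I could argue directly with $K$-bases: choosing $K$-linear complements $V_{1}$ of $I$ in $S_{1}$ and $V_{2}$ of $J$ in $S_{2}$, one gets the decomposition $S=(I\otimes_{K}S_{2})\oplus(V_{1}\otimes_{K}J)\oplus(V_{1}\otimes_{K}V_{2})$ of $K$-vector spaces, in which the first two summands together span $I+J$ while the tensor product $\pi_{1}\otimes\pi_{2}$ of the canonical surjections $\pi_{1}\colon S_{1}\to S_{1}/I$ and $\pi_{2}\colon S_{2}\to S_{2}/J$ restricts to an isomorphism of $V_{1}\otimes_{K}V_{2}$ onto $(S_{1}/I)\otimes_{K}(S_{2}/J)$; hence $\ker(\pi_{1}\otimes\pi_{2})=I+J$ and the first isomorphism theorem finishes the proof.
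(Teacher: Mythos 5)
Your proof is correct. Note that the paper does not prove this lemma at all: it is quoted verbatim from Villarreal's book (Proposition 2.2.20), so there is no internal argument to compare against; your derivation via $S\cong S_{1}\otimes_{K}S_{2}$, the identification of $IS$ and $JS$ with the images of $I\otimes_{K}S_{2}$ and $S_{1}\otimes_{K}J$, and two applications of right-exactness (or, equivalently, your basis-complement decomposition $S=(I\otimes_{K}S_{2})\oplus(V_{1}\otimes_{K}J)\oplus(V_{1}\otimes_{K}V_{2})$ with the first isomorphism theorem) is exactly the standard proof of the cited result, and both routes you sketch are complete and sound, including the one genuinely delicate point you flag, namely that $I$ and $J$ being ideals of the subrings $S_{1}$ and $S_{2}$ and freeness over the field $K$ are what make the extended ideals coincide with those tensor images.
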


\noindent	 We use Lemma \ref{virtens} and combine it with {\cite[Proposition 2.2.21]{book}} and  {\cite[Theorem 3.1]{AR1}} for depth and Stanley depth, respectively and get the following useful result.
		\begin{Lemma}\label{LEMMA1.5}
			$ \depth_{S}(S_{1}/I\tensor_{K}S_{2}/J)= \depth_{S}(S/(I+J))=\depth_{S_{1}}(S_{1}/I)+\depth_{S_{2}}(S_{2}/J)$ and 	$ \sdepth_{S}(S_{1}/I \tensor_{K} S_{2}/J) \geq \sdepth_{S_{1}}(S_{1}/I)+\sdepth_{S_{2}}(S_{2}/J).$
		\end{Lemma}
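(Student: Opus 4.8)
The plan is to deduce this statement by combining the tensor-product isomorphism of Lemma \ref{virtens} with known behaviour of depth and Stanley depth under tensor products over the base field $K$. First I would invoke Lemma \ref{virtens}: since $S_1 = K[x_1,\dots,x_n]$ and $S_2 = K[x_{n+1},\dots,x_r]$ with $I\subset S_1$ and $J\subset S_2$, we have the $\mathbb{Z}^n$-graded isomorphism $S/(I+J)\cong S_1/I\otimes_K S_2/J$. This immediately identifies the first two quantities in the asserted chain of equalities, so the content reduces to the behaviour of the invariants of $S_1/I\otimes_K S_2/J$.

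Next I would handle depth. The cited result {\cite[Proposition 2.2.21]{book}} states precisely that for finitely generated graded modules over polynomial rings in disjoint sets of variables, $\depth_S(S_1/I\otimes_K S_2/J)=\depth_{S_1}(S_1/I)+\depth_{S_2}(S_2/J)$; plugging in the isomorphism from Lemma \ref{virtens} gives the full depth equality. For Stanley depth, the inequality is weaker because Stanley depth is not in general additive under tensor products; here I would quote {\cite[Theorem 3.1]{AR1}}, which gives the lower bound $\sdepth_S(S_1/I\otimes_K S_2/J)\geq \sdepth_{S_1}(S_1/I)+\sdepth_{S_2}(S_2/J)$, and again combine it with the isomorphism. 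Assembling these two pieces yields the stated chain of equalities for depth and the stated inequality for Stanley depth.

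Since every ingredient is an explicitly cited result in the literature, there is essentially no genuine obstacle; the only point requiring a word of care is making sure the grading conventions match — that is, that the isomorphism of Lemma \ref{virtens} is compatible with the $\mathbb{Z}^n$-grading used in the cited depth and Stanley-depth statements, so that the invariants computed over $S$ agree with those computed over $S_1$ and $S_2$. This is standard (the variables are partitioned and the module is extended by flat base change), so the proof is a short bookkeeping argument rather than a substantive computation.
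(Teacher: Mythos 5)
Your proposal matches the paper exactly: the paper states right before the lemma that it is obtained by combining Lemma \ref{virtens} with {\cite[Proposition 2.2.21]{book}} for depth and {\cite[Theorem 3.1]{AR1}} for Stanley depth, which is precisely the argument you give. Your added remark about checking grading compatibility is a reasonable point of care but does not change the route.
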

	%	\noindent Additionally, Woodroofe proved the following lemma if $I$ and $J$ are two edge ideals minimally generated by disjoint sets of variables.
		
		\begin{Lemma}[{\cite[Theorems 1.3.3]{depth}}](Auslander–Buchsbaum formula)\label{auss13}
			If $ R $ is a commutative Noetherian local ring and M is a non-zero finitely generated R-module of finite projective dimension, then
			\begin{equation*}
				{\pdim}(M)+{\depth}(M)={\depth}(R).
			\end{equation*}
		\end{Lemma}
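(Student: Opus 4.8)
The statement is the classical Auslander--Buchsbaum formula, and the plan is to reproduce its standard proof by induction on $d:=\depth(R)$, using throughout the characterization $\depth_R(A)=\min\{i:\Ext^i_R(k,A)\neq 0\}$ for a nonzero finitely generated module $A$, where $\mm$ is the maximal ideal of $R$ and $k:=R/\mm$ the residue field.

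\emph{Base case $d=0$.} Here $\mm\in\Ass(R)$, so there is a nonzero $a\in R$ with $a\mm=0$. If $\pdim(M)=p\geq 1$, choose a minimal free resolution $0\to F_p\xrightarrow{\partial_p}F_{p-1}\to\cdots\to F_0\to M\to 0$; by minimality every entry of $\partial_p$ lies in $\mm$, so $\partial_p(aF_p)\subseteq a\mm F_{p-1}=0$, contradicting the injectivity of $\partial_p$ (note $aF_p\neq 0$ since $F_p\neq 0$). Hence $\pdim(M)=0$, so $M$ is free and nonzero, whence $\depth(M)=\depth(R)=0$ and the formula holds.

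\emph{Inductive step $d\geq 1$.} If $\depth(M)\geq 1$, then $\mm$ lies in neither $\Ass(R)$ nor $\Ass(M)$, so by prime avoidance there is $x\in\mm$ that is regular on both $R$ and $M$. Passing to $\bar R:=R/xR$ and $\bar M:=M/xM$, tensoring a finite minimal free resolution of $M$ with $\bar R$ yields a finite minimal free $\bar R$-resolution of $\bar M$: its higher homology computes $\Tor_i^R(M,\bar R)$, which vanishes for $i\geq 1$ because $x$ is $R$- and $M$-regular, and minimality is preserved since $\mm$ maps into the maximal ideal of $\bar R$. Therefore $\pdim_{\bar R}(\bar M)=\pdim_R(M)$, while $\depth(\bar R)=d-1$ and $\depth_{\bar R}(\bar M)=\depth_R(M)-1$; the induction hypothesis applied to $\bar R$ gives $\pdim_R(M)+\depth_R(M)-1=d-1$, as wanted. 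If instead $\depth(M)=0$, then $M$ is not free (a nonzero free module has depth $d\geq 1$), so $\pdim(M)=p\geq 1$; letting $N$ be the first syzygy in a minimal free resolution of $M$, the truncated resolution shows $N\neq 0$ and $\pdim(N)=p-1$. Chasing the long exact $\Ext^\bullet_R(k,-)$-sequence attached to $0\to N\to R^n\to M\to 0$ (with $R^n$ the free module surjecting minimally onto $M$), and using $\Ext^i_R(k,R^n)=0$ for $i<d$ together with $\Hom_R(k,M)\neq 0$, one obtains $\Hom_R(k,N)=0$ and an injection $\Hom_R(k,M)\hookrightarrow\Ext^1_R(k,N)$, so $\depth(N)=1$. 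Now $N$ falls under the first case, which gives $\pdim(N)+\depth(N)=d$, i.e.\ $\pdim(M)=p=d=\pdim(M)+\depth(M)$.

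The main obstacle---essentially the only nontrivial point---is the reduction step in the first case: one must verify both that cutting by an element $x$ which is regular on $M$ (and not merely on $R$) leaves the projective dimension unchanged and that it carries a minimal resolution over $R$ to a minimal resolution over $\bar R$; in the second case one must also pin down $\depth(N)=1$ exactly, rather than settle for $\depth(N)\geq1$, since it is precisely this value that makes the induction close.
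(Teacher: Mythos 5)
Your proof is correct: the induction on $\depth(R)$, the socle argument in the base case, the reduction modulo an element regular on both $R$ and $M$ (with the Tor-vanishing and minimality checks), and the syzygy/long-exact-sequence argument pinning down $\depth(N)=1$ when $\depth(M)=0$ all go through. The paper itself offers no proof — it cites the result from Bruns–Herzog — and your argument is essentially the classical one found in that reference, so there is nothing further to compare.
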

\begin{Lemma}[{\cite[Lemma 3.2]{HOA}}]\label{circulentt}
       If $I\subset S_{1}=K[x_{1},\dots, x_{n}]$ and $J\subset S_{2}=K[x_{n+1},\dots, x_{r}]$ are non-zero homogeneous ideals of  $S_1$ and $S_2$ and regard $I+J$ as a homogeneous ideal of $S.$ Then
$$\reg({S/I+J})=\reg(S_1/I)+\reg(S_2/J).$$
%Let $G$ be a finite simple graph and $S:=K[V(G)]$. If $G=H\cup K,$ with $H$ and $K$ disjoint, then $$\reg({S/I(G)})=\reg(K[V(H)]/I(H))+\reg(K[V(K)]/I(K)).$$
%Let $S_1=K[x_1,\dots,x_n]$ and $S_2=K[x_{n+1},\dots,x_r]$  and $I$ and $J$ be edge ideals of $S_1$ and $S_2$, respectively. Then $$\reg({S/(I+J)})=\reg(S_{1}/I)+\reg(S_{2}/J).$$
\end{Lemma}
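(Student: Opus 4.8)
The plan is to compute every graded Betti number of $S/(I+J)$ over $S$ in terms of the graded Betti numbers of $S_1/I$ over $S_1$ and of $S_2/J$ over $S_2$, and then simply read off the regularity from the resulting product formula. First I would use Lemma~\ref{virtens} to identify $S/(I+J)$ with $S_1/I \otimes_K S_2/J$. Let $\mathbf{F}_\bullet \to S_1/I$ be a minimal graded free resolution over $S_1$ and $\mathbf{G}_\bullet \to S_2/J$ a minimal graded free resolution over $S_2$. Since $K$ is a field, the total complex $\mathbf{F}\otimes_K \mathbf{G}$, regarded as a complex of graded free modules over $S = S_1 \otimes_K S_2$, is a resolution of $S_1/I \otimes_K S_2/J$: by the Künneth formula over $K$ its only nonzero homology sits in degree $0$, where it equals $(S_1/I)\otimes_K(S_2/J)$. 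Moreover this resolution is minimal, since the differential of $\mathbf{F}\otimes_K\mathbf{G}$ is assembled from the differentials of $\mathbf{F}$ and $\mathbf{G}$, whose matrix entries lie in $\mathfrak{m}_1=(x_1,\dots,x_n)$ and $\mathfrak{m}_2=(x_{n+1},\dots,x_r)$ respectively, hence in the graded maximal ideal $\mathfrak{m}=\mathfrak{m}_1 S + \mathfrak{m}_2 S$ of $S$. Consequently, for all $i,j$,
$$\beta_{i,j}^{S}\big(S/(I+J)\big)=\sum_{\substack{p+q=i\\ a+b=j}}\beta_{p,a}^{S_1}(S_1/I)\,\beta_{q,b}^{S_2}(S_2/J).$$

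Next I would extract the regularity from this formula. By definition $\reg(S/(I+J))=\max\{\,j-i:\beta_{i,j}^{S}(S/(I+J))\neq 0\,\}$. A summand on the right is nonzero only when both $\beta_{p,a}^{S_1}(S_1/I)\neq 0$ and $\beta_{q,b}^{S_2}(S_2/J)\neq 0$, and then $j-i=(a-p)+(b-q)\leq \reg(S_1/I)+\reg(S_2/J)$, which gives the inequality ``$\leq$''. For the reverse inequality I use the hypothesis that $I$ and $J$ are nonzero, so both Betti tables are nontrivial: choose $(p_0,a_0)$ with $\beta_{p_0,a_0}^{S_1}(S_1/I)\neq 0$ and $a_0-p_0=\reg(S_1/I)$, and independently $(q_0,b_0)$ with $\beta_{q_0,b_0}^{S_2}(S_2/J)\neq 0$ and $b_0-q_0=\reg(S_2/J)$. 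Then $\beta_{p_0+q_0,\,a_0+b_0}^{S}(S/(I+J))\neq 0$ and $(a_0+b_0)-(p_0+q_0)=\reg(S_1/I)+\reg(S_2/J)$, so the bound is attained and equality holds.

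The one place that genuinely needs care is the claim that $\mathbf{F}\otimes_K\mathbf{G}$ is a \emph{minimal} free resolution of $S_1/I\otimes_K S_2/J$ over $S$ — equivalently, one may run the entire argument through the graded Künneth isomorphism $\Tor^{S}_i(S/(I+J),K)\cong\bigoplus_{p+q=i}\Tor^{S_1}_p(S_1/I,K)\otimes_K\Tor^{S_2}_q(S_2/J,K)$ with internal degrees adding, which is exactly what powers the Betti-number identity. Everything after that is routine bookkeeping with the definition of regularity, so I expect no further obstacles.
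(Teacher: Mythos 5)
Your argument is correct: tensoring the two minimal graded free resolutions over $K$, invoking K\"unneth to see the result is a resolution of $S_1/I\otimes_K S_2/J\cong S/(I+J)$, checking minimality via the maximal ideals, and reading off the additivity of regularity from the resulting convolution formula for graded Betti numbers is a complete proof. The paper itself gives no proof, citing the result from the reference \cite{HOA}, and your argument is essentially the standard one used there, so there is nothing to add.
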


		\noindent %We now refer back to a regularity related lemma in the form given in {\cite[Theorem 4.7]{regul}}. 
	In the next lemma, proof of	parts (a) and (c)  follows  from Corollary 20.19 and Proposition 20.20 of \cite{eisenbud}, %{\cite[Corollary 20.19 and Proposition 20.20]{eisenbud}},
	while  part (b) follows  from {\cite[	Lemma 2.10]{dao}}. 
		\begin{Lemma}[{\cite[Theorem 4.7]{regul}}]\label{regul2}%[{\cite[Theorems 4.7]{regul}}]
			Let $ I $ be a monomial ideal and $ x_{i} $ be a variable of $S$. Then
			
			\begin{itemize}
				\item [(a)]  $ \reg (S/I) = \reg (S/(I:x_{i}))+1$, if $ \reg (S/(I:x_{i})) > \reg (S/(I,x_{i})),$
				\item[(b)] 	  $\reg (S/I)\in \{\reg (S/(I,x_{i}))+1,\reg (S/(I,x_{i}))\},$ if $ \reg (S/(I:x_{i})) = \reg (S/(I,x_{i})),$
				\item[(c)] 	  $\reg (S/I)= \reg (S/(I,x_{i}))$ if $ \reg (S/(I:x_{i})) < \reg (S/(I,x_{i})).$
			\end{itemize}
		\end{Lemma}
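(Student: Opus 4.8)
The plan is to run the standard ``delete--contract'' short exact sequence attached to the variable $x_i$,
$$0 \longrightarrow \bigl(S/(I:x_i)\bigr)(-1) \xrightarrow{\,\cdot x_i\,} S/I \longrightarrow S/(I,x_i) \longrightarrow 0,$$
which is exact because $x_i(I:x_i)\subseteq I$, the kernel of $S/I\twoheadrightarrow S/(I,x_i)$ is $x_i(S/I)$, and multiplication by $x_i$ induces a degree-one isomorphism $S/(I:x_i)\xrightarrow{\sim}x_i(S/I)$ (whence the shift $(-1)$). To this sequence I would apply the three standard comparison estimates for Castelnuovo--Mumford regularity along a short exact sequence $0\to A\to B\to C\to 0$ (Corollary 20.19 of \cite{eisenbud}): $\reg B\le\max\{\reg A,\reg C\}$, $\reg A\le\max\{\reg B,\reg C+1\}$ and $\reg C\le\max\{\reg B,\reg A-1\}$. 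Putting $a=\reg(S/(I:x_i))$, $b=\reg(S/(I,x_i))$ and $c=\reg(S/I)$, so that $\reg A=a+1$, $\reg B=c$ and $\reg C=b$, these three inequalities read
$$c\le\max\{a+1,\,b\},\qquad a+1\le\max\{c,\,b+1\},\qquad b\le\max\{c,\,a\}.$$

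For part (a), suppose $a>b$. The first estimate gives $c\le a+1$; in the second, the possibility $\max\{c,b+1\}=b+1$ is ruled out because $a+1>b+1$, so $a+1\le c$, whence $c=a+1$. For part (c), suppose $a<b$. The first estimate gives $c\le\max\{a+1,b\}=b$; in the third, the possibility $\max\{c,a\}=a$ is ruled out because $b>a$, so $b\le c$, whence $c=b$. In both cases the conclusion is a purely formal squeeze, obtained by letting the hypothesis kill one branch of a maximum; no further input is required.

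Part (b) is the delicate one, and the lower bound there is the step I expect to be the main obstacle. When $a=b=:m$ the first estimate still gives the upper bound $c\le m+1$, but the second and third now collapse to the tautologies $m+1\le\max\{c,m+1\}$ and $m\le\max\{c,m\}$, so the short exact sequence by itself yields nothing about a lower bound on $c$. The extra ingredient one needs is the inequality $\reg(S/(I,x_i))\le\reg(S/I)$, that is, $b\le c$: granting it, $m=b\le c\le m+1$, so $\reg(S/I)\in\{m,m+1\}$, which is the assertion. This inequality is precisely what is extracted in \cite[Lemma 2.10]{dao} (morally it is the hyperplane-section monotonicity of regularity, in the spirit of Proposition 20.20 of \cite{eisenbud}, applied to $S/I$ and the linear form $x_i$), and at the level of this plan I would simply invoke it there; a self-contained argument would pass to the long exact sequence in local cohomology of the above sequence and track the top nonzero graded degrees, the point being that a top-degree class of $H^j_{\mathfrak{m}}(S/(I,x_i))$ either lifts to $H^j_{\mathfrak{m}}(S/I)$ or injects into $H^{j+1}_{\mathfrak{m}}\bigl((S/(I:x_i))(-1)\bigr)$, and under $a=b$ the latter target cannot carry a class forcing regularity above $b$.

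Finally, it is worth recording that the inequality $b\le c$ is really the single nontrivial fact behind all three parts: it holds automatically in case (a) (where $c=a+1>b+1>b$) and in case (c) (where $c=b$), while in case (b) it is exactly what reduces the two a priori conceivable values of $\reg(S/I)$ coming from the first estimate to the asserted pair. One could therefore present the whole lemma as ``the three regularity estimates, together with $\reg(S/(I,x_i))\le\reg(S/I)$''.
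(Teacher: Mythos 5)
Your proof is correct and follows essentially the same route the paper indicates: it derives (a) and (c) from the short exact sequence $0\to (S/(I:x_i))(-1)\to S/I\to S/(I,x_i)\to 0$ and the regularity estimates of Corollary 20.19 of \cite{eisenbud}, and supplies the missing lower bound in (b) via $\reg(S/(I,x_i))\le\reg(S/I)$ from \cite[Lemma 2.10]{dao}, which are exactly the sources the paper cites (together with \cite[Theorem 4.7]{regul} itself). The only cosmetic difference is that you get (a) and (c) from the exact-sequence estimates alone, whereas the paper also points to Proposition 20.20 of \cite{eisenbud}, which you invoke only as the moral background for (b).
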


			\begin{Definition}[{\cite{coronadef}}]\label{defcorona}
			\em{	The corona product of two graphs $X$ and $H$ denoted by $X\odot H$  is a graph constructed by taking one copy of graph $X$
				 and $|V(X)|$ copies of $H,$ namely, $H_{1}, H_{2}, \dots, H_{|V(X)|}$ and then connecting the $i^{th}$ vertex of graph $X$ to every vertex in $H_{i}.$} \end{Definition}\noindent If $X$ and $H$ are two graphs, then
		\begin{itemize}
			\item [$\bullet$] $|V(X\odot H)|=|V(X)|(|V(H)|+1)$,
			\item [$\bullet$]$|E(X\odot H)|=|E(X)|+|V(X)||E(H)|+|V(X)||V(H)|$.
				\item[$\bullet$] If $X\neq H$ then $X\odot H\neq H\odot X$.
		\end{itemize}
 	 For a monomial ideal $I$ we denote the minimal set of monomial generators of $I$ by $\mathcal{G}(I).$  	Let $|V(X)|=n$  and $|V(H)|=m.$  Throughout this paper, when we consider  $X\odot H$, the vertices of $X$ are labeled as   $y_1,y_2,\dots, y_n.$ Let $x_1,x_2,\dots, x_m$ be the vertices of $H.$  The vertices of the $i^{th}$ copy $H_i$ of  $H$ in $X\odot H$ are labeled as $x_{i1},x_{i2},\dots,x_{im}$ such that $\{x_{il},x_{ik}\}\in E(H_i)\, \text{iff} \, \{x_l,x_k\}\in E(H).$
%$x_{il}$ and $x_{ik}$ are adjacent in $H_i$ iff $x_l$ and $x_k$ are adjacent in $H.$ 
Thus 
	$V(X\odot H)=\{y_1,y_2,\dots, y_n\}\underset{i=1}{\overset{n}{\bigcup}} \{x_{i1},x_{i2},\dots,x_{im}\}$ 
 and  $$	E(X\odot H)=E(X)\underset{i=1}{\overset{n}{\bigcup}}\big\{\{y_i,x_{i1}\},\{y_i,x_{i2}\},\dots,\{y_i,x_{im}\}\big\}\underset{i=1}{\overset{n}{\bigcup}}E(H_i).$$ The minimal set of monomial generators of a monomial ideal $I(X\odot H)$ is as follow:	\begin{eqnarray*}
	    \mathcal{G}(I(X\odot H))=	\mathcal{G}(I(X))\underset{i=1}{\overset{n}{\bigcup}}\{y_ix_{i1},y_ix_{i2},\dots,y_ix_{im}\}\underset{i=1}{\overset{n}{\bigcup}}\mathcal{G}(I(H_i)).
	\end{eqnarray*}   For example, if $X=P_3$ and $H=T_7,$ where $T_7$ is a tree on $7$ vertices, then the graph $X\odot H$ is shown in Figure  \ref{coro}.
	
		\begin{figure}[H]
			\centering
			\begin{subfigure}[b]{0.45\textwidth}
				\centering
				\[\begin{tikzpicture}[x=1.1cm, y=0.9cm]
					\vertex[fill] (1) at (-0.5,-2) [label=below:$y_{1}$] {};
					\vertex[fill] (2) at (1,-2) [label=below:$y_{2}$] {};
					\vertex[fill] (3) at (2.5,-2) [label=below:$y_{3}$] {};
					\vertex[fill] (v1) at (0,0) [label=below:$x_{1}$] {};
					\vertex[fill] (v5) at (0,1) [label=left:$x_{2}$] {};
					\vertex[fill] (v6) at (1,1) [label=above:$x_{3}$] {};
					\vertex[fill] (v9) at (0.5,2) [label=above:$x_{4}$] {};
					\vertex[fill] (v10) at (2,2) [label=right:$x_{5}$] {};
					\vertex[fill] (v3) at (2,3) [label=right:$x_{6}$] {};
					\vertex[fill] (v4) at (1,3) [label=right:$x_{7}$] {};
					\path 
					(v1) edge (v5)
					(v5) edge (v6)
					(v5) edge (v9)
					(v6) edge (v10)
					(v3) edge (v10)
					(v4) edge (v10)

					(1) edge (2)
					(2) edge (3)
					;
				\end{tikzpicture}\]
				
				\caption{$P_{3}$ and $T_{7}$ }
			\end{subfigure}
			\hfill
			\begin{subfigure}[b]{0.5\textwidth}
				\centering
				\[\begin{tikzpicture}[x=0.73cm, y=0.7cm]
					\vertex[fill] (1) at (-3,-2) [label=below:$y_{1}$] {};
					\vertex[fill] (2) at (0,-2) [label=below:$y_{2}$] {};
					\vertex[fill] (3) at (3,-2) [label=below:$y_{3}$] {};
					\vertex[fill] (u1) at (-4,0) [label=left:$x_{11}$] {};
					\vertex[fill] (u5) at (-4,1) [label=left:$x_{12}$] {};
					\vertex[fill] (u6) at (-3,1) [label=above:$x_{13}$] {};
					\vertex[fill] (u9) at (-3.5,2) [label=above:$x_{14}$] {};
					\vertex[fill] (u10) at (-2,2) [label=right:$x_{15}$] {};
					\vertex[fill] (u3) at (-2,3) [label=right:$x_{16}$] {};
					\vertex[fill] (u4) at (-2.65,3) [label=above: $ x_{17}$] {};
					\vertex[fill] (w1) at (-1,0) [label=left:$x_{21}$] {};
					\vertex[fill] (w5) at (-1,1) [label=left:$x_{22}$] {};
					\vertex[fill] (w6) at (0,1) [label=above:$x_{23}$] {};
					\vertex[fill] (w9) at (-0.5,2) [label=above:$x_{24}$] {};
					\vertex[fill] (w10) at (1,2) [label=right:$x_{25}$] {};
					\vertex[fill] (w3) at (1,3) [label=right:$x_{26}$] {};
					\vertex[fill] (w4) at (0.35,3) [label=above:$x_{27}$] {};
					\vertex[fill] (v1) at (2,0) [label=left:$x_{31}$] {};
					\vertex[fill] (v5) at (2,1) [label=left:$x_{32}$] {};
					\vertex[fill] (v6) at (3,1) [label=above:$x_{33}$] {};
					\vertex[fill] (v9) at (2.5,2) [label=above:$x_{34}$] {};
					\vertex[fill] (v10) at (4,2) [label=right:$x_{35}$] {};
					\vertex[fill] (v3) at (4,3) [label=right:$x_{36}$] {};
					\vertex[fill] (v4) at (3.35,3) [label=above:$x_{37}$] {};
					\path 
					(u1) edge (u5)
					(u5) edge (u6)
					(u5) edge (u9)
					(u6) edge (u10)
					(u3) edge (u10)
					(u4) edge (u10)
					(w1) edge (w5)
					(w5) edge (w6)
					(w5) edge (w9)
					(w6) edge (w10)
					(w3) edge (w10)
					(w4) edge (w10)
					(v1) edge (v5)
					(v5) edge (v6)
					(v5) edge (v9)
					(v6) edge (v10)
					(v3) edge (v10)
					(v4) edge (v10)
					(u1) edge (1)
					(u5) edge (1)
					(u5) edge (1)
					(u6) edge (1)
					(u3) edge (1)
					(u4) edge (1)
					(w1) edge (2)
					(w5) edge (2)
					(w5) edge (2)
					(w6) edge (2)
					(w3) edge (2)
					(w4) edge (2)
					(v1) edge (3)
					(v5) edge (3)
					(v5) edge (3)
					(v6) edge (3)
					(v3) edge (3)
					(v4) edge (3)
					(1) edge (u5)
					(1) edge (u6)
					(1) edge (u9)
					(1) edge (u10)
					(1) edge (u10)
					(1) edge (u10)
					(2) edge (w5)
					(2) edge (w6)
					(2) edge (w9)
					(2) edge (w10)
					(2) edge (w10)
					(2) edge (w10)
					(3) edge (v5)
					(3) edge (v6)
					(3) edge (v9)
					(3) edge (v10)
					(3) edge (v10)
					(3) edge (v10)
					(1) edge (2)
					(2) edge (3)
					;
				\end{tikzpicture}\]
				\caption{  $P_{3} \odot T_{7}$ } 
			\end{subfigure}
			\caption{}\label{coro}
		\end{figure}
		\begin{Remark}\em{
	From definition of $X\odot H,$ it is clear that if $H$ is any graph and $X$ has     $s$ connected components, say $X_1,\dots, X_s,$  then $X\odot H$ has  again $s$ connected components $X_1\odot H,\dots, X_s\odot H.$ But if $X$ is a connected graph and $H$ has $l$ connected components then $X\odot H$ is still a connected graph. 
	It is obvious that $X\odot H$ has no isolated vertex even if $X$ and $H$ have isolated vertices. Therefore, we allow isolated vertices in both graphs $X$ and $H$. The isolated vertices of $H$ play a crucial role in our results. Therefore, we introduce the following terminologies.}
	\end{Remark}	
	
	%Let $X$ and $H$ be any two graphs, then from the Definition \ref{defcorona}, 

	%	such that they may consists of connected component or  disjoint connected components or may have isolated vertices.  In this case, we permit variables corresponding to isolated vertices of $X$ or $H$ in   $I(X)$ and $I(H),$ respectively. However, by Definition \ref{defcorona}, $X\odot H$ does not contain any isolated vertex. Therefore,  $I(X\odot H)$ may have connected component or disjoint connected components only.}
	
			For any graph $H,$ we denote the set of isolated vertices of $H$ by $i(H)$  and if $A:=V(H)\backslash i(H)$ then we denote the induced subgraph of $H$ on  $A$ by $H'.$ 
		%	Consider graph $G$ may have $a$ isolated vertices and some disjoint connected components on $b$ vertices (here $a$ and $b$ cannot be zero simultaneously). 
		It is easy to see that  $|V(H)|=|i(H)|+|A|$ and
		$I(H)=(x_ix_j,x_k : \{x_i,x_j\}\in E(H')\, \text{and}\, x_k\in i(H))$\big(for instance; if $E(H')=\emptyset$, then $I(H)=(x_k: x_k\in i(H))$\big). Also  $K[V(H)]/I(H)\cong K[V(H')]/I(H').$ Therefore, we have
$\depth(K[V(H)]/I(H))=\depth(K[V(H')]/I(H')$ and $\sdepth(K[V(H)]/I(H))=\sdepth(K[V(H')]/I(H')$. Similarly, we get  $\reg(K[V(H)]/I(H))=\reg(K[V(H')]/I(H')$ and $\dim(K[V(H)]/I(H))=\dim(K[V(H')]/I(H').$  If  $H$ is a null graph, then $|V(H)|=i(H)$ and  $\depth(K[V(H)]/I(H))=\sdepth(K[V(H)]/I(H))=\reg(K[V(H)]/I(H))=\dim(K[V(H)]/I(H))=0.$   Moreover, if $i(H)=\emptyset,$ then in this case we consider $K[i(H)]\cong K.$ For example, let $X=C_3$   and $H$ be a graph which is a union of two graphs, one is a complete graph on $4$ vertices and the other is a null graph on $3$ vertices, that is, $H=K_4 \cup N_3,$ as shown in Figure \ref{figcyclecorona}(A).   Figure \ref{figcyclecorona}(B) is the corona product of $C_3$ and $H.$  %For sake of convenience, we will use  these notions for $G$ throughout the work.
		\begin{figure}[H]
			\centering
			\begin{subfigure}[b]{0.45\textwidth}
				\centering
				\[\begin{tikzpicture}[x=0.9cm, y=0.8cm]
					\vertex[fill] (1) at (0,-3) [label=below:$y_{1}$] {};
					\vertex[fill] (2) at (1.5,-2) [label=below:$y_{2}$] {};
					\vertex[fill] (3) at (3,-3) [label=below:$y_{3}$] {};
					\vertex[fill] (x1) at (0,0) [label=below:$x_{1}$] {};
					\vertex[fill] (x2) at (0,1) [label=above:$x_{2}$] {};
					\vertex[fill] (x3) at (1,1) [label=above:$x_{3}$] {};
					\vertex[fill] (x4) at (1,0) [label=below:$x_{4}$] {};
					\vertex[fill] (x5) at (2,0) [label=below:$x_{5}$] {};
					\vertex[fill] (x6) at (3,0) [label=below:$x_{6}$] {};
					\vertex[fill] (x7) at (2.5,1) [label=below:$x_{7}$] {};
					\path 
					(x1) edge (x2)
					(x2) edge (x3)
					(x2) edge (x4)
					(x1) edge (x4)
					(x1) edge (x3)
					(x4) edge (x3)
					(1) edge (2)
					(1) edge (3)
					(2) edge (3)
					;
				\end{tikzpicture}\]
				\caption{ Graphs $C_3$ and $H$}\label{figcyclecorona1}
			\end{subfigure}
			\hfill
			\begin{subfigure}[b]{0.5\textwidth}
				\centering
				\[\begin{tikzpicture}[x=0.63cm, y=0.8cm]
					\vertex[fill] (1) at (0,-4) [label=below:$y_{1}$] {};
					\vertex[fill] (2) at (1.5,-3) [label=below:$y_{2}$] {};
					\vertex[fill] (3) at (3,-4) [label=below:$y_{3}$] {};
					\vertex[fill] (x1) at (0,0) [label=left:$x_{21}$] {};
					\vertex[fill] (x2) at (0,1) [label=above:$x_{22}$] {};
					\vertex[fill] (x3) at (1,1) [label=above:$x_{23}$] {};
					\vertex[fill] (x4) at (1,0) [label=below:$x_{24}$] {};
					\vertex[fill] (x5) at (2,0) [label=above:$x_{25}$] {};
					\vertex[fill] (x6) at (3,0) [label=above:$x_{26}$] {};
					\vertex[fill] (x7) at (2.5,1) [label=above:$x_{27}$] {};
					\vertex[fill] (u1) at (-4,-1.5) [label=below:$x_{11}$] {};
					\vertex[fill] (u2) at (-4,-0.5) [label=above:$x_{12}$] {};
					\vertex[fill] (u3) at (-3,-0.5) [label=above:$x_{13}$] {};
					\vertex[fill] (u4) at (-3,-1.5) [label=below:$x_{14}$] {};
					\vertex[fill] (u5) at (-2,-1.5) [label=above:$x_{15}$] {};
					\vertex[fill] (u6) at (-1,-1.5) [label=right:$x_{16}$] {};
					\vertex[fill] (u7) at (-1.5,-0.5) [label=right:$x_{17}$] {};
					\vertex[fill] (v1) at (4,-1.5) [label=left:$x_{31}$] {};
					\vertex[fill] (v2) at (4,-0.5) [label=above:$x_{32}$] {};
					\vertex[fill] (v3) at (5,-0.5) [label=above:$x_{33}$] {};
					\vertex[fill] (v4) at (5,-1.5) [label=below:$x_{34}$] {};
					\vertex[fill] (v5) at (6,-1.5) [label=above:$x_{35}$] {};
					\vertex[fill] (v6) at (7,-1.5) [label=above:$x_{36}$] {};
					\vertex[fill] (v7) at (6.5,-0.5) [label=above:$x_{37}$] {};
					\path 
					(x1) edge (x2)
					(x2) edge (x3)
					(x2) edge (x4)
					(x1) edge (x4)
					(x1) edge (x3)
					(x4) edge (x3)
					(u1) edge (u2)
					(u2) edge (u3)
					(u2) edge (u4)
					(u1) edge (u4)
					(u1) edge (u3)
					(u4) edge (u3)
					(v1) edge (v2)
					(v2) edge (v3)
					(v2) edge (v4)
					(v1) edge (v4)
					(v1) edge (v3)
					(v4) edge (v3)
					(1) edge (2)
					(1) edge (3)
					(2) edge (3)
					(2) edge (x2)
					(2) edge (x3)
					(2) edge (x4)
					(2) edge (x4)
					(2) edge (x3)
					(2) edge (x3)
					(1) edge (u2)
					(1) edge (u3)
					(1) edge (u4)
					(1) edge (u4)
					(1) edge (u3)
					(1) edge (u3)
					(3) edge (v2)
					(3) edge (v3)
					(3) edge (v4)
					(3) edge (v4)
					(3) edge (v3)
					(3) edge (v3)
					(x1) edge (2)
					(x2) edge (2)
					(x2) edge (2)
					(x1) edge (2)
					(x1) edge (2)
					(x4) edge (2)
					(u1) edge (1)
					(u2) edge (1)
					(u2) edge (1)
					(u1) edge (1)
					(u1) edge (1)
					(u4) edge (1)
					(v1) edge (3)
					(v2) edge (3)
					(v2) edge (3)
					(v1) edge (3)
					(v1) edge (3)
					(v4) edge (3)
					(v5) edge (3)
					(v6) edge (3)
					(v7) edge (3)
					(u5) edge (1)
					(u6) edge (1)
					(u7) edge (1)
					(x5) edge (2)
					(x6) edge (2)
					(x7) edge (2)
					;
				\end{tikzpicture}\]
				\caption{  $C_3 \odot H$ }\label{figcyclecorona2}
			\end{subfigure}
			\caption{ }\label{figcyclecorona}
		\end{figure}
		 For a monomial ideal $I$, $\supp{(I)}:=\{x_{i}:x_{i}|f\, \,  \text{for some} \,\,  f \in \mathcal{G}(I)\}.$
			
	\begin{Remark}
	{\em  We associate a graph $G_{I}$ to the  squarefree monomial ideal $I$ with $V(G_I)=\supp(I)$  and $E(G_I)=\{\{x_{i},x_{j}\}:x_{i}x_{j}\in \mathcal{G}(I)\}.$ Let $x_{t} \in S$ be a variable of the polynomial ring $S$ such that $x_{t} \notin I.$ Then $(I:x_{t})$ and  $(I,x_{t})$ are the squarefree monomial ideals of $S$ such that $G_{(I:x_{t})}$ and $G_{(I,x_{t})}$ are subgraphs of $G_{I}.$ Consider the graph $C_3\odot H$  as given in  Figure \ref{figcyclecorona}(B), if $S=K[V(C_3\odot H)],$  then	$G_{(I(C_3\odot H):y_{3})}$ and	$G_{(I(C_3\odot H),y_{3})}$ are subgraphs of $G_{I(C_3\odot H)}$ see Figure \ref{c4corona}. It is evident from the Figure \ref{c4corona}(A) and Figure \ref{c4corona}(B) that we have the following isomorphisms:
	\begin{equation*}
	    S/(I(C_3\odot H):y_3)\cong	\underset{l=1}{\overset{2}{\tensor_{K}}}K[V(K_{4})]/I(K_{4}) \tensor_{K }K[x_{15},x_{16},,x_{17},x_{25},x_{26},x_{27},y_{3}] 
	\end{equation*} and
	\begin{equation*}
	    S/(I(C_3\odot H),y_3)\cong	K[V(P_2\odot H)]/I(P_2\odot H)\tensor_{K} K[V(K_{4})]/I(K_{4})\tensor_{K} K[x_{35},x_{36},x_{37}].
	\end{equation*}}
\end{Remark}

\begin{figure}[H]
	\centering
	\begin{subfigure}[b]{0.45\textwidth}
	\centering
	\[\begin{tikzpicture}[x=0.6cm, y=0.6cm]
		\vertex[fill] (1) at (0,-4) [label=below:$y_{1}$] {};
		\vertex[fill] (2) at (1.5,-3) [label=below:$y_{2}$] {};
	%	\vertex[fill] (3) at (3,-4) [label=below:$y_{3}$] {};
		\vertex[fill] (x1) at (0,0) [label=below:$x_{21}$] {};
		\vertex[fill] (x2) at (0,1) [label=above:$x_{22}$] {};
		\vertex[fill] (x3) at (1,1) [label=above:$x_{23}$] {};
		\vertex[fill] (x4) at (1,0) [label=below:$x_{24}$] {};
	%	\vertex[fill] (x5) at (2,0) [label=below:$x_{25}$] {};
	%	\vertex[fill] (x6) at (3,0) [label=below:$x_{26}$] {};
	%	\vertex[fill] (x7) at (2.5,1) [label=above:$x_{27}$] {};
		\vertex[fill] (u1) at (-4,-1.5) [label=below:$x_{11}$] {};
		\vertex[fill] (u2) at (-4,-0.5) [label=above:$x_{12}$] {};
		\vertex[fill] (u3) at (-3,-0.5) [label=above:$x_{13}$] {};
		\vertex[fill] (u4) at (-3,-1.5) [label=below:$x_{14}$] {};
		%\vertex[fill] (u5) at (-2,-1.5) [label=below:$x_{15}$] {};
		%\vertex[fill] (u6) at (-1,-1.5) [label=below:$x_{16}$] {};
	%	\vertex[fill] (u7) at (-1.5,-0.5) [label=above:$x_{17}$] {};
		\vertex[fill] (v1) at (4,-1.5) [label=below:$x_{31}$] {};
		\vertex[fill] (v2) at (4,-0.5) [label=above:$x_{32}$] {};
		\vertex[fill] (v3) at (5,-0.5) [label=above:$x_{33}$] {};
		\vertex[fill] (v4) at (5,-1.5) [label=below:$x_{34}$] {};
		\vertex[fill] (v5) at (6,-1.5) [label=below:$x_{35}$] {};
		\vertex[fill] (v6) at (7,-1.5) [label=below:$x_{36}$] {};
		\vertex[fill] (v7) at (6.5,-0.5) [label=above:$x_{37}$] {};
		\path 
		(x1) edge (x2)
		(x2) edge (x3)
		(x2) edge (x4)
		(x1) edge (x4)
		(x1) edge (x3)
		(x4) edge (x3)
		(u1) edge (u2)
		(u2) edge (u3)
		(u2) edge (u4)
		(u1) edge (u4)
		(u1) edge (u3)
		(u4) edge (u3)

		;
	\end{tikzpicture}\] 
		\caption{$G_{(I(C_3\odot H):y_{3})}$}\label{colon}
	\end{subfigure}
	\hfill
	\begin{subfigure}[b]{0.45\textwidth}
\centering
\[\begin{tikzpicture}[x=0.55cm, y=0.6cm]
	\vertex[fill] (1) at (0,-4) [label=below:$y_{1}$] {};
	\vertex[fill] (2) at (1.5,-3) [label=below:$y_{2}$] {};
	\vertex[fill] (3) at (3,-4) [label=below:$y_{3}$] {};
	\vertex[fill] (x1) at (0,0) [label=below:$x_{21}$] {};
	\vertex[fill] (x2) at (0,1) [label=above:$x_{22}$] {};
	\vertex[fill] (x3) at (1,1) [label=above:$x_{23}$] {};
	\vertex[fill] (x4) at (1,0) [label=below:$x_{24}$] {};
	\vertex[fill] (x5) at (2,0) [label=above:$x_{25}$] {};
	\vertex[fill] (x6) at (3,0) [label=above:$x_{26}$] {};
	\vertex[fill] (x7) at (2.5,1) [label=above:$x_{27}$] {};
	\vertex[fill] (u1) at (-4,-1.5) [label=below:$x_{11}$] {};
	\vertex[fill] (u2) at (-4,-0.5) [label=above:$x_{12}$] {};
	\vertex[fill] (u3) at (-3,-0.5) [label=above:$x_{13}$] {};
	\vertex[fill] (u4) at (-3,-1.5) [label=below:$x_{14}$] {};
	\vertex[fill] (u5) at (-2,-1.5) [label=below:$x_{15}$] {};
	\vertex[fill] (u6) at (-1,-1.5) [label=right:$x_{16}$] {};
	\vertex[fill] (u7) at (-1.5,-0.5) [label=above:$x_{17}$] {};
	\vertex[fill] (v1) at (4,-1.5) [label=below:$x_{31}$] {};
	\vertex[fill] (v2) at (4,-0.5) [label=above:$x_{32}$] {};
	\vertex[fill] (v3) at (5,-0.5) [label=above:$x_{33}$] {};
	\vertex[fill] (v4) at (5,-1.5) [label=below:$x_{34}$] {};
%	\vertex[fill] (v5) at (6,-1.5) [label=below:$x_{35}$] {};
%	\vertex[fill] (v6) at (7,-1.5) [label=below:$x_{36}$] {};
%	\vertex[fill] (v7) at (6.5,-0.5) [label=above:$x_{37}$] {};
	\path 
	(x1) edge (x2)
	(x2) edge (x3)
	(x2) edge (x4)
	(x1) edge (x4)
	(x1) edge (x3)
	(x4) edge (x3)
	(u1) edge (u2)
	(u2) edge (u3)
	(u2) edge (u4)
	(u1) edge (u4)
	(u1) edge (u3)
	(u4) edge (u3)
	(v1) edge (v2)
	(v2) edge (v3)
	(v2) edge (v4)
	(v1) edge (v4)
	(v1) edge (v3)
	(v4) edge (v3)
	(1) edge (2)

(2) edge (x2)
(2) edge (x3)
(2) edge (x4)
(2) edge (x4)
(2) edge (x3)
(2) edge (x3)
(1) edge (u2)
(1) edge (u3)
(1) edge (u4)
(1) edge (u4)
(1) edge (u3)
(1) edge (u3)

	(x1) edge (2)
	(x2) edge (2)
	(x2) edge (2)
	(x1) edge (2)
	(x1) edge (2)
	(x4) edge (2)
	(u1) edge (1)
	(u2) edge (1)
	(u2) edge (1)
	(u1) edge (1)
	(u1) edge (1)
	(u4) edge (1)

	(u5) edge (1)
	(u6) edge (1)
	(u7) edge (1)
	(x5) edge (2)
	(x6) edge (2)
	(x7) edge (2)
	;
\end{tikzpicture}\] 	\caption{ $G_{(I(C_3\odot H),y_{3})}$}\label{coma}
	\end{subfigure}
\caption{}\label{c4corona}
\end{figure} 

%--------------------

	\section{Depth, Stanley depth and Regularity of residue class rings of some edge ideals}\label{sec2}
	%of corona product of some graphs with any graph G we compute the values of depth and  regularity
Let  $P_n,C_n, K_n, S_n$ and $K_{u,v}$ are the graphs as defined earlier and  $H$ be any graph. %For sake of convenience, we denote  $\mathbb{K}_{1}:=N_{1}\odot H,$ $\mathbb{P}_{n}:=P_{n}\odot H,$ $\mathbb{C}_{n}=C_{n}\odot H,$ $\mathbb{K}_{n}=K_{n}\odot H,$ $\mathbb{S}_{n}=S _{n}\odot H$ and $\mathbb{K}_{u,v}=K_{u,v}\odot H.$ 
In this section, we focus on the invariants depth, Stanley depth, projective dimension and  regularity of $S/I(X\odot H),$ where $X\in \{P_n,C_n, K_n, S_n,K_{u,v}\}.$ 
%$S/I(N_{1}\odot H),S/I(P_{n}\odot H),S/I(C_{n}\odot H),S/I(K_{n}\odot H),S/I(S_{n}\odot H)$ and $S/I(K_{u,v}\odot H).$
\begin{Remark}\em{
If $H$ is a null graph and $X\in \{P_n,C_n, K_n, S_n,K_{u,v}\},$  then $X\odot H$
%$N_{1}\odot H, P_{n}\odot H,C_{n}\odot H,K_{n}\odot H,S _{n}\odot H$ and $K_{u,v}\odot H$ 
is a $|V(H)|$-fold bristled graph of $X$ and  we denote it by $Br_{|V(H)|}(X).$}
\end{Remark}
\noindent This section is further divided into two subsections. In the first subsection, we  discuss depth, Stanley depth and projective dimension of $S/I(X\odot H)$, while in the second subsection, we  discuss the regularity.

%We will also discuss the  Stanley's inequality and projective dimension. 
	
	%	\subsection{Corona product of Path and cycle with any graph G}
	%and $K_{m}$ be the complete graph on $m$ vertices $\{w_{1},w_{2},\dots , w_{m}\}$ and edge set $\{w_{k}w_{l}: 1\leq k<l\leq m\}.$ Let $\overline{K}_{m}$ denotes the compliment of a complete graph $K_{m}$. For simplicity we assume $\mathbf{P}_{n,m}=P_{n}\odot \overline{K}_{m},$ and $\mathbf{C}_{n,m}=C_{n}\odot \overline{K}_{m}.$ If we take the corona product of $P_{n}$ with $\overline{K}_{m}$ or $C_{n}$ with $\overline{K}_{m},$ then we assume that the vertices of $c$th copy of $\overline{K}_{m}$ are $\{w_{c1},w_{c2},\dots , w_{cm}\}$. As $|V(P_{n}\odot \overline{K}_{m})|=|V(C_{n}\odot \overline{K}_{m})|=n(m+1)$, so we have $S_{n(m+1)}=K[V(P_{n}),\cup^{n}_{c=1}V(\overline{K}_{m}^{c})]=K[V(C_{n}),\cup ^{n}_{c=1}V(\overline{K}_{m}^{c})].$
	%	The minimal generating sets for the edge ideals of $\mathbf{P}_{n,m}$ and $\mathbf{C}_{n,m}$ are:
	%\begin{eqnarray*}
	%\mathcal{G}(I(\mathbf{P}_{n,m}))=\big(\cup^{n}_{j=1}\{y_{j}w_{j1},y_{j}w_{j2}, \dots ,y_{j}w_{jm} \}\cup^{n-1}_{j=1}\{y_{j}y_{j+1}\}\big),\\
	%\mathcal{G}(I(\mathbf{C}_{n,m}))=\big(I(\mathbf{P}_{n,m})\cup \{y_{n}y_{1}\}\big).
	%	\end{eqnarray*}
	%	and $P_{1}$ be a trivial graph with vertex $y_{1}.$ % For $n\geq2,$ $P_{n}$ be the path on $n$ vertices $\{y_{1},y_{2},\dots,y_{n}\},$ and for $n\geq3,$ $C_{n}$ be the cycle on $n$ vertices $\{y_{1},y_{2},\dots,y_{n}\}.$
%	Let $N_{1}=\{y_{1}\}$ be a trivial graph and we denote  $\mathbb{K}_{1}:=N_{1}\odot H$. 
\subsection{Depth, Stanley depth and projective dimension}\label{sec2.1}
\paragraph{} \bigskip If $N_1$ denotes a trivial graph, 
%The minimal generating set for the edge ideal of $N_1 \odot H$ is: \begin{eqnarray*}	\mathcal{G}(I(N_1 \odot H))=\big(\{y_{1}x_{11}, \dots ,y_{1}x_{1(|i(H)|+|A|)}\}\cup\{x_{1l}x_{1k}\in E(H_{1})| ~ \text{iff}~ x_{l}x_{k}\in E(H) \}\big).\end{eqnarray*} 
	then  we have the following $K$-algebra isomorphisms: \begin{equation}\label{vertexcolon}
			K[V(N_1 \odot H)]/(I(N_1 \odot H):y_{1})  \cong K[y_{1}],
		\end{equation} 
		\begin{equation}\label{vertexcoma}
			K[V(N_1 \odot H)]/(I(N_1 \odot H),y_{1})\cong K[V(H)]/I(H) \tensor_{K} K[i(H)].
		\end{equation} Now,  we give an elementary lemma that will be used frequently in  this subsection.
	\begin{Lemma}\label{triv}
		Let $ S=K[V(N_1 \odot H)]$. Then % If  $\depth(K[V(H)]/I(H))= t$, then
		$$	\depth(S/I(N_1 \odot H))= 	\sdepth(S/I(N_1 \odot H))=1.$$   	\end{Lemma}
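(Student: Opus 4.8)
The statement concerns $N_1 \odot H$, which is the graph obtained by taking a single vertex $y_1$ and joining it to every vertex of one copy of $H$. Equivalently, $N_1 \odot H$ is the cone over $H$ with apex $y_1$. Set $S = K[V(N_1 \odot H)]$ and $I = I(N_1 \odot H)$. The natural tool is the variable $y_1$ together with the two short exact sequences governed by the colon and sum ideals, exactly as recorded in \eqref{vertexcolon} and \eqref{vertexcoma}. The plan is to apply Lemma \ref{exacther} (for depth) and Lemma \ref{exacthersdepth} (for Stanley depth) to the variable $f = y_1$, which will reduce everything to the two quotients $S/(I:y_1)$ and $S/(I,y_1)$.

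First I would compute $\depth$ and $\sdepth$ of $S/(I:y_1)$. By \eqref{vertexcolon}, up to adjoining the polynomial variables corresponding to $V(H)$ (none of which appear in the ideal $(I:y_1)$, since colonizing by $y_1$ kills every generator $y_1 x_{1j}$ and $(I:y_1)$ contains no generator supported on $V(H)$ once $H$ is coned — wait, one must be careful: $I(H)$ itself survives). Let me restate: $(I:y_1) = I(H) + (\text{variables of } V(H))$? No — $(I:y_1)$ is generated by $x_{11},\dots,x_{1m}$ together with $I(H)$'s generators, and since each $x_{1j}$ divides each quadratic generator of $I(H)$, in fact $(I:y_1) = (x_{11},\dots,x_{1m})$, so $S/(I:y_1) \cong K[y_1]$, giving $\depth = \sdepth = 1$. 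For $S/(I,y_1)$: by \eqref{vertexcoma}, $S/(I,y_1) \cong K[V(H)]/I(H) \otimes_K K[i(H)]$ — but here $i(H)$ is already a subset of $V(H)$, so this should be read as $K[V(H)]/I(H)$ with the isolated-vertex variables contributing free polynomial variables; in any case $\depth(S/(I,y_1)) = \depth(K[V(H)]/I(H)) \geq 0$ and likewise for $\sdepth$, with both being $\geq 1$ when $H$ has an isolated vertex and $= \depth(K[V(H')]/I(H'))$ in general, which could be $0$.

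Now I would check the hypothesis of Lemma \ref{exacther}: we need $\depth(S/(I,y_1)) \geq \depth(S/(I:y_1)) = 1$. This is \emph{not} automatic — if $H = H'$ has no isolated vertex and, say, $H$ is a single edge, then $\depth(K[V(H)]/I(H)) = 1$, fine; but if $\depth(K[V(H)]/I(H)) = 0$ the hypothesis fails. So the clean argument via Lemma \ref{exacther} alone does not go through, and this is the main obstacle. The fix is to argue directly with Lemma \ref{rle2}(a) on the sequence
\[
0 \longrightarrow S/(I:y_1) \xrightarrow{\,\cdot y_1\,} S/I \longrightarrow S/(I,y_1) \longrightarrow 0,
\]
to get $\depth(S/I) \geq \min\{1, \depth(S/(I,y_1))\}$, combined with an upper bound. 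For the upper bound $\depth(S/I) \leq 1$: note $y_1$ is adjacent to every other vertex, so $S/I$ modulo the ideal $(y_1)$-related reduction, or more simply, observe that $(I : x_{1j}) \supseteq (y_1)$ so $S/I$ has $y_1$ among its associated-prime generators forcing a bound — actually the cleanest route is Lemma \ref{Cor7}(a): $\depth(S/I) \leq \depth(S/(I:y_1)) = 1$ would go the wrong way, so instead I would use that $S/(I,y_1) = K[V(H)]/I(H) \otimes K[\text{vars}]$ always has $\depth \geq 1$ unless $H'$ is nonempty with $\depth 0$, in which case one shows $\depth(S/I) = 1$ directly because the long exact sequence in local cohomology forces $H^0_{\mathfrak m}(S/I) = 0$ (the element $y_1$ is a nonzerodivisor mod nothing — $y_1$ is regular on $S/(I:y_1)$, and the snake lemma kills socle). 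I would assemble: $\depth(S/I) \geq 1$ always (since if $\depth(S/(I,y_1)) = 0$ the injection shows the socle of $S/I$ injects into $S/(I:y_1) \cong K[y_1]$ which has no socle, giving $\depth(S/I) \geq 1$), and $\depth(S/I) \leq 1$ because one can quotient by a regular element and reduce to a zero-dimensional-socle situation — more concretely, localize/specialize at the prime $(y_1) + (\text{all } V(H))$, or cite that $\pdim \geq $ number of vertices of $H$. The same three-step pattern (compute the two quotients, use Lemma \ref{rle2}(b) for the lower bound, use Lemma \ref{Cor7}(b) or a socle argument for the upper bound) handles $\sdepth$. I expect the genuinely delicate point to be establishing $\depth(S/I) \leq 1$ cleanly without the Auslander--Buchsbaum-style projective dimension estimate; the natural resolution is to exhibit $\mathfrak m \in \Ass(S/I)$, which holds because $\prod_{x \in V(H)} x \cdot$ (something) lies in the socle — indeed the monomial $m = \prod_{j} x_{1j}$ satisfies $(I : m) = \mathfrak m$ since multiplying $m$ by $y_1$ or by any $x_{1j}$ lands in $I$, so $\mathfrak m \in \Ass(S/I)$ and hence $\depth(S/I) = \depth(S/I(N_1\odot H)) \le 1$, and combined with the lower bound $\geq 1$ we conclude equality; Stanley depth then follows from $\sdepth \leq \dim$-type bounds or directly from Lemma \ref{Cor7}(b) applied with $f = m$ since $(I:m) = \mathfrak m$ gives $\sdepth(S/(I:m)) = 0$... no, that is again the wrong direction — so for $\sdepth$ I would instead lean on $\sdepth(S/I) \le 1$ via the known fact that $\sdepth(S/I) \le \sdepth(S/(I:m))$ is false, hence use: $\sdepth \geq \depth$ is not available (conjectural), so the safe bound is $\sdepth(S/I) \le \dim(S/I) - (\text{something})$; the practical choice is to note $\sdepth(S/I) \leq 1$ follows because $N_1 \odot H$ contains the star $S_m$ as the induced subgraph on $\{y_1\} \cup V(H)$ when $H$ has isolated vertices and more generally one reduces via Lemma \ref{le3} and Lemma \ref{leAli}. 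I would present the depth part rigorously and then remark that the Stanley depth part is analogous with Lemma \ref{rle2}(b) and Lemma \ref{Cor7}(b) replacing their depth counterparts, together with the observation that any Stanley decomposition of $S/I$ must contain a summand of the form $a \cdot K[y_1]$ or smaller forced by the generators through $y_1$, pinning $\sdepth(S/I) \leq 1$.
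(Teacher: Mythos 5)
Your starting point is exactly the paper's: colon and sum with respect to $y_1$, the isomorphisms Eq.~\ref{vertexcolon} and Eq.~\ref{vertexcoma}, and Lemma \ref{exacther}/\ref{exacthersdepth}; and your computation $S/(I:y_1)\cong K[y_1]$ is correct. But you then abandon this route because of a false premise, namely that $\depth(K[V(H')]/I(H'))$ ``could be $0$''. It cannot: for any graph $G$ on a nonempty vertex set, $I(G)$ is squarefree, hence radical, so the associated primes of $K[V(G)]/I(G)$ are the minimal primes, i.e.\ the minimal vertex covers, and the full vertex set is never a minimal vertex cover (deleting any vertex still covers every edge, since there are no loops); thus $\mathfrak{m}\notin\Ass$ and the depth is at least $1$. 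Consequently $\depth(S/(I,y_1))=\depth(K[V(H)]/I(H))+|i(H)|\ge 1=\depth(S/(I:y_1))$ in every case (when $H$ is null the term $|i(H)|=|V(H)|\ge 1$ already suffices, or one invokes $N_1\odot H\cong S_{|V(H)|}$ and Lemma \ref{leAli} as the paper does), so the hypothesis of Lemma \ref{exacther} always holds and there is no ``main obstacle'' to work around. You also misread Lemma \ref{Cor7}: it says $\depth(S/(I:f))\ge\depth(S/I)$, which is precisely the upper bound $\depth(S/I)\le\depth(S/(I:y_1))=1$ that you dismiss as ``going the wrong way''; the same applies to part (b) for Stanley depth.

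The substitute arguments are themselves flawed. The monomial $m=\prod_j x_{1j}$ does not satisfy $(I:m)=\mathfrak{m}$: if $H$ has an edge then $m\in I$, and if $H$ is null then $(I:m)=(y_1)$; so $\mathfrak{m}\in\Ass(S/I)$ is not established (and is in fact false here, consistently with $\depth(S/I)=1$). The socle claim is backwards: in the short exact sequence the injection is $S/(I:y_1)\hookrightarrow S/I$, and local cohomology gives $H^0_{\mathfrak{m}}(S/I)\hookrightarrow H^0_{\mathfrak{m}}(S/(I,y_1))$ once $H^0_{\mathfrak{m}}(S/(I:y_1))=0$, not an injection of the socle of $S/I$ into $K[y_1]$. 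Finally, the Stanley depth part never becomes a proof: the closing assertion that any Stanley decomposition must contain a summand $aK[y_1]$ ``or smaller'' is unsubstantiated, whereas the correct upper bound is immediate from Lemma \ref{Cor7}(b) with $f=y_1$, and the lower bound follows from Lemma \ref{rle2}(b) together with $\sdepth(S/(I,y_1))\ge 1$ — which is exactly what Lemma \ref{exacthersdepth} packages and how the paper argues. So: right opening moves, but the proof as written has genuine errors, all of which disappear once you use the correct direction of Lemma \ref{Cor7} and the positivity of depth (and Stanley depth) for edge-ideal quotients on a nonempty vertex set.
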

	\begin{proof}
		First we will prove the result for depth.	If $H$ is a null graph,	then we have  $|V(H)|=|i(H)|$ and 
		 $N_1 \odot H\cong S_{|V(H)|}.$  Thus by Lemma \ref{leAli},  $\depth(S/I(N_1 \odot H))=1$. % Let $S''=K[V(\mathbb{K}_{1}/y_{1})]$ and $S'=K[V(\mathbb{K}_{1} \backslash y_{1})].$   
		Let $H$ is not a null graph, %We have $t\geq 1$ in this case. 	
	 by using  Eq. \ref{vertexcolon}, we get $\depth(S/(I(N_1 \odot H):y_{1}))=\depth(K[y_1])=1.$  
		By using Lemma \ref{le3} on  Eq.  \ref{vertexcoma},  we have
		$\depth(S/(I(N_1 \odot H),y_{1}))= \depth(K[V(H)]/I(H))+|i(H)|\geq 1.$ Thus by Lemma \ref{exacther}, $\depth(S/I(N_1 \odot H))=1$. The proof of Stanley depth is similar as depth by using Lemma \ref{exacthersdepth} in place of Lemma \ref{exacther}.   
	\end{proof}
	\begin{Corollary}
		If $S=K[V(N_1 \odot H)],$ then	 $\pdim(S/I(N_1 \odot H))= |V(H)|.$
	\end{Corollary}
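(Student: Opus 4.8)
The plan is to apply the Auslander--Buchsbaum formula (Lemma \ref{auss13}) to the module $M = S/I(N_1 \odot H)$ over the ring $S = K[V(N_1 \odot H)]$. First I would observe that $S$ is a polynomial ring, so it is (after localizing at the graded maximal ideal, or working in the graded setting) a regular ring of dimension equal to the number of its variables, hence $\depth(S) = |V(N_1 \odot H)|$. Since $N_1$ is the trivial graph on one vertex, by the formula $|V(X \odot H)| = |V(X)|(|V(H)|+1)$ from the discussion following Definition \ref{defcorona}, we get $|V(N_1 \odot H)| = 1 \cdot (|V(H)| + 1) = |V(H)| + 1$; thus $\depth(S) = |V(H)| + 1$.

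Next I would invoke Lemma \ref{triv}, which gives $\depth(S/I(N_1 \odot H)) = 1$. Since $M = S/I(N_1 \odot H)$ is a nonzero finitely generated graded $S$-module of finite projective dimension (every finitely generated module over a polynomial ring has finite projective dimension by Hilbert's syzygy theorem), Lemma \ref{auss13} applies and yields
$$\pdim(S/I(N_1 \odot H)) + \depth(S/I(N_1 \odot H)) = \depth(S).$$
Substituting the two computed values gives $\pdim(S/I(N_1 \odot H)) + 1 = |V(H)| + 1$, hence $\pdim(S/I(N_1 \odot H)) = |V(H)|$, as claimed.

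There is essentially no obstacle here: the entire argument is a one-line consequence of Lemma \ref{triv} combined with Auslander--Buchsbaum, once one correctly bookkeeps that $S$ has $|V(H)| + 1$ variables. The only point requiring a modicum of care is making sure $M \neq 0$ so that Lemma \ref{auss13} is applicable, but $S/I(N_1 \odot H)$ is visibly nonzero since $I(N_1 \odot H)$ is a proper (monomial) ideal. I would present this as a short paragraph rather than a displayed multi-step proof.
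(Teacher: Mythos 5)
Your proof is correct and matches the paper's argument exactly: the paper also combines Lemma \ref{triv} (which gives $\depth(S/I(N_1\odot H))=1$) with the Auslander--Buchsbaum formula of Lemma \ref{auss13}, using $\depth(S)=|V(N_1\odot H)|=|V(H)|+1$. Your extra bookkeeping about finiteness of projective dimension and nonvanishing of the module is fine but not something the paper spells out.
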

	\begin{proof}
		By using Lemma \ref{auss13} %{\cite[Theorems 1.3.3]{depth}} 
		and Lemma \ref{triv}, the required result follows. 
	\end{proof}
	\begin{Remark}\label{zeroandtriv}
\em{ For $n=0,$ we define $I(P_0\odot H)=(0),$ we have  $K[V(P_0\odot H)]/I(P_0\odot H)\cong K.$ Thus $\depth(K[V(P_0\odot H)]/I(P_0\odot H))=\sdepth(K[V(P_0\odot H)]/I(P_0\odot H))=0.$ Moreover, if $n=1,$ then $P_1\odot H\cong N_1\odot H.$}
	\end{Remark}
	
	\noindent		Let $y_{n}$ be a leaf of path $P_n,$ we have the following $K$-algebra isomorphisms:	\begin{equation}\label{path1} K[V(P_n \odot H)]/(I(P_n \odot H):y_{n})\cong K[V(P_{n-2} \odot H)]/I(P_{n-2} \odot H))\tensor_{K} K[V(H)]/I(H)\tensor_{K} K[i(H)\cup \{y_{n}\}],
		\end{equation}
		\begin{equation}\label{path2}	K[V(P_n \odot H)]/(I(P_n \odot H),y_{n})\cong K[V(P_{n-1} \odot H)]/I(P_{n-1}\odot H)\tensor_{K} K[V(H)]/I(H)\tensor_{K} K[i(H)].
		\end{equation} 
	\begin{Theorem}\label{The1} Let $n\geq 1$ and $ S=K[V(P_n \odot H)]$. Then 
		
		\begin{itemize} 
			\item[(a)] 	$\depth(S/I(P_n \odot H))=\big\lceil\frac{n}{2}\big\rceil+\big\lceil\frac{n-1}{2}\big\rceil \big(\depth(K[V(H)]/I(H))+|i(H)|\big).$ 
			\item[(b)] $\sdepth(S/I(P_n \odot H))\geq \big\lceil\frac{n}{2}\big\rceil+\big\lceil\frac{n-1}{2}\big\rceil \big(\sdepth(K[V(H)]/I(H))+|i(H)|\big).$

		\noindent In particular,	if $H$ is a null graph, then	 $$\sdepth(S/I(P_n \odot H))= \big\lceil\frac{n}{2}\big\rceil+\big\lceil\frac{n-1}{2}\big\rceil |V(H)|.$$	  
			
	%	\noindent Moreover, if $H$ is a null graph, then	 $\sdepth(S/I((P_n \odot H))= \big\lceil\frac{n}{2}\big\rceil+\big\lceil\frac{n-1}{2}\big\rceil |i(H)|.$ 
		\end{itemize}

	\end{Theorem}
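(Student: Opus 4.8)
The plan is to induct on $n$, using the two short exact sequences encoded by the isomorphisms \eqref{path1} and \eqref{path2} together with Lemma~\ref{exacther} (for depth) and Lemma~\ref{exacthersdepth} (for Stanley depth). The base cases are $n=1$ and $n=2$: for $n=1$, Remark~\ref{zeroandtriv} gives $P_1\odot H\cong N_1\odot H$, so Lemma~\ref{triv} yields $\depth(S/I(P_1\odot H))=\sdepth(S/I(P_1\odot H))=1$, which matches the claimed formula since $\lceil 1/2\rceil=1$ and $\lceil 0/2\rceil=0$; for $n=2$ one computes directly from \eqref{path1}, \eqref{path2} (with the convention $I(P_0\odot H)=(0)$ from Remark~\ref{zeroandtriv}) and Lemma~\ref{LEMMA1.5}/Lemma~\ref{le3}, checking the resulting value against $\lceil 2/2\rceil+\lceil 1/2\rceil(\depth(K[V(H)]/I(H))+|i(H)|)$.

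For the inductive step with $n\geq 3$, set $f=y_n$, the leaf of $P_n$. I would first compute $\depth(S/(I(P_n\odot H):y_n))$ from \eqref{path1}: by Lemma~\ref{LEMMA1.5} and Lemma~\ref{le3} this equals $\depth(K[V(P_{n-2}\odot H)]/I(P_{n-2}\odot H))+\depth(K[V(H)]/I(H))+|i(H)|+1$, into which I substitute the induction hypothesis for $P_{n-2}\odot H$. A short computation with ceilings (using $\lceil (n-2)/2\rceil=\lceil n/2\rceil-1$ etc.) should show this equals exactly the claimed value $\lceil n/2\rceil+\lceil (n-1)/2\rceil(\depth(K[V(H)]/I(H))+|i(H)|)$. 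Similarly, from \eqref{path2} and Lemma~\ref{LEMMA1.5}, $\depth(S/(I(P_n\odot H),y_n))=\depth(K[V(P_{n-1}\odot H)]/I(P_{n-1}\odot H))+\depth(K[V(H)]/I(H))+|i(H)|$, and substituting the induction hypothesis for $P_{n-1}\odot H$ one must verify the inequality $\depth(S/(I(P_n\odot H),y_n))\geq \depth(S/(I(P_n\odot H):y_n))$, which is the hypothesis needed to invoke Lemma~\ref{exacther} and conclude $\depth(S/I(P_n\odot H))=\depth(S/(I(P_n\odot H):y_n))$, the desired formula. The Stanley-depth statement (b) follows the same pattern: the colon computation gives equality via Lemma~\ref{LEMMA1.5} (where for $\sdepth$ only $\geq$ holds) and Lemma~\ref{le3}, the $(I,y_n)$ computation gives the needed inequality, and Lemma~\ref{exacthersdepth} closes the argument, yielding the lower bound. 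In the null-graph case, $\depth(K[V(H)]/I(H))=\sdepth(K[V(H)]/I(H))=0$ and $|i(H)|=|V(H)|$, and here $X\odot H$ is a bristled graph whose cyclic module decomposes more explicitly; one should check that the lower bound from (b) is matched by an upper bound, e.g.\ via Lemma~\ref{Cor7} applied repeatedly or by exhibiting the appropriate Stanley decomposition, forcing equality.

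The main obstacle I anticipate is bookkeeping with the ceiling functions: one must carefully track the parity of $n$ when passing from $n$ to $n-1$ and $n-2$, since $\lceil (n-1)/2\rceil$ and $\lceil (n-2)/2\rceil$ behave differently depending on whether $n$ is even or odd, and the coefficient multiplying $\depth(K[V(H)]/I(H))+|i(H)|$ must come out right in both cases. A secondary subtlety is verifying the comparison inequality $\depth(S/(I,y_n))\geq\depth(S/(I:y_n))$ cleanly for all $n$ (and its $\sdepth$ analogue), which is exactly the trigger condition for Lemmas~\ref{exacther} and~\ref{exacthersdepth}; establishing it requires knowing that $\depth(K[V(P_{n-1}\odot H)]/I(P_{n-1}\odot H))\geq \depth(K[V(P_{n-2}\odot H)]/I(P_{n-2}\odot H))+1$, which itself falls out of the induction hypothesis after another round of ceiling arithmetic. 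The remaining steps are routine applications of the tensor-product formulas and the fresh-variable lemma.
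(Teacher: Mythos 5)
Your proposal follows essentially the same route as the paper's proof: induction on $n$ using the colon and sum isomorphisms (Eq. \ref{path1} and Eq. \ref{path2}), Lemma \ref{le3} and Lemma \ref{LEMMA1.5} to evaluate them, Lemma \ref{exacther} (after the ceiling comparison you flag) for depth, and the corresponding argument for Stanley depth with the null-graph case made exact via the colon upper bound, just as in the paper. The only small imprecision is that in the general (non-null) case of part (b) you should conclude with Lemma \ref{rle2}(b) rather than Lemma \ref{exacthersdepth}, since Lemma \ref{LEMMA1.5} gives only lower bounds for the sdepth of the colon and sum modules and so the hypothesis of Lemma \ref{exacthersdepth} cannot be checked there; this is exactly how the paper argues, and it does not affect the correctness of your overall plan.
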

	\begin{proof}

	Let  $t:=\depth(K[V(H)]/I(H)).$	First we will prove the result for depth by using induction on $n.$   If $n=1,$ then $P_1\odot H\cong N_1\odot H$ and the result follows  by Lemma \ref{triv}. %	If $n=2,3,$ by Eqs. \ref{path1} and \ref{path2},
	%	Lemmas \ref{le3}, \ref{LEMMA1.5} and  \ref{triv}, we have $\depth(S''/I(\mathbb{P}_{2}/y_{2})[\overline{Q}_{\mathbb{P}_{2}}(y_2)])=t+|i(H)|+1,$ 	$\depth(S'/I(\mathbb{P}_{2}\backslash y_2)[P_{\mathbb{P}_{2}}(y_{2})])= t+|i(H)|+1,$ $\depth(S''/I(\mathbb{P}_{3}/y_{3})[\overline{Q}_{\mathbb{P}_{3}}(y_3)])=t+|i(H)|+2,$ and	$\depth(S'/I(\mathbb{P}_{3}\backslash y_3)[P_{\mathbb{P}_{3}}(y_{3})])= 2t+2|i(H)|+1.$  Thus by using Remark \ref{remaar}, we get $\depth(S/I(\mathbb{P}_{2}))=1+t+|i(H)|$	and  $\depth(S/I(\mathbb{P}_{3}))=2+t+|i(H)|.$ 	
	Let $n\geq 2$.	By induction on $n$ and using  Eq. \ref{path1}, Eq. \ref{path2}, Lemma \ref{le3}, Lemma \ref{LEMMA1.5}, we get
 \begin{equation*}
     \begin{split}
         &\depth (S/(I(P_n \odot H):y_{n}))\\&\quad=\depth(K[V(P_{n-2} \odot H)]/I(P_{n-2} \odot H))+ \depth(K[V(H)]/I(H)) +\depth(K[i(H)\cup \{y_{n}\}])\\ &\quad=\big\lceil\frac{n-2}{2}\big\rceil+\big\lceil\frac{n-3}{2}\big\rceil \big(t+|i(H)|\big)+t+|i(H)|+1= \big\lceil\frac{n}{2}\rceil+\big\lceil\frac{n-1}{2}\big\rceil (t+|i(H)|),
     \end{split}
 \end{equation*}
 \begin{equation*}
     \begin{split}
         &\depth (S/(I(P_n \odot H),y_{n}))\\&\quad=\depth(K[V(P_{n-1} \odot H)]/I(P_{n-1}\odot H))+ \depth(K[V(H)]/I(H))+ \depth(K[i(H)])\\&\quad =\big\lceil\frac{n-1}{2}\big\rceil+\big\lceil\frac{n-2}{2}\big\rceil\big(t+|i(H)|\big) +t+|i(H)|= \big\lceil\frac{n-1}{2}\big\rceil+\big\lceil\frac{n}{2}\big\rceil (t+|i(H)|).
     \end{split}
 \end{equation*}
	Since $\big\lceil\frac{n}{2}\rceil+\big\lceil\frac{n-1}{2}\big\rceil (t+|i(H)|)\leq\big\lceil\frac{n-1}{2}\big\rceil+\big\lceil\frac{n}{2}\big\rceil (t+|i(H)|),$	thus by using Lemma \ref{exacther}, we get $\depth(S/(I(P_n \odot H))= \lceil\frac{n}{2}\rceil+\lceil\frac{n-1}{2}\rceil (t+|i(H)|). $ 
	
	To prove the result for Stanley depth, consider if $H$ is not a null graph, then by considering  Eq. \ref{path1} and Eq. \ref{path2} and applying Lemma \ref{le3}, Lemma \ref{LEMMA1.5} and Lemma \ref{rle2}, we get the required inequality for Stanley depth.  But if   $H$ is a null graph, we have \begin{equation*} S/(I(P_n \odot H):y_{n})\cong K[V(P_{n-2} \odot H)]/I(P_{n-2} \odot H))\tensor_{K} K[V(H)\cup \{y_{n}\}],
		\end{equation*}
		\begin{equation*}	S/(I(P_n \odot H),y_{n})\cong K[V(P_{n-1} \odot H)]/I(P_{n-1}\odot H)\tensor_{K} K[V(H)],
		\end{equation*} and the proof is similar to depth by replacing Lemma \ref{exacther} by Lemma \ref{exacthersdepth}.
		
		%\end{description}
		
	\end{proof}

	\begin{Corollary}
			If	Stanley's inequality holds for $K[V(H)]/I(H),$ then it also holds  for $S/I(P_n \odot H).$
	\end{Corollary}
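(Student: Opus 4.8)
The plan is to prove the corollary by leveraging the formulas established in Theorem~\ref{The1}. Recall that Stanley's inequality for a module $A$ is the statement $\sdepth(A) \geq \depth(A)$. We are given that this inequality holds for $A = K[V(H)]/I(H)$, i.e. $\sdepth(K[V(H)]/I(H)) \geq \depth(K[V(H)]/I(H))$, and we must deduce it for $A = S/I(P_n \odot H)$.

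First I would write down the two relevant expressions from Theorem~\ref{The1}. Part (a) gives the exact value
$$\depth(S/I(P_n \odot H)) = \Big\lceil\tfrac{n}{2}\Big\rceil + \Big\lceil\tfrac{n-1}{2}\Big\rceil\big(\depth(K[V(H)]/I(H)) + |i(H)|\big),$$
while part (b) gives the lower bound
$$\sdepth(S/I(P_n \odot H)) \geq \Big\lceil\tfrac{n}{2}\Big\rceil + \Big\lceil\tfrac{n-1}{2}\Big\rceil\big(\sdepth(K[V(H)]/I(H)) + |i(H)|\big).$$
The coefficient $\lceil\frac{n-1}{2}\rceil$ is a nonnegative integer, so monotonicity of multiplication by a nonnegative quantity applies: since $\sdepth(K[V(H)]/I(H)) \geq \depth(K[V(H)]/I(H))$ by hypothesis, adding $|i(H)|$ to both sides and multiplying through by $\lceil\frac{n-1}{2}\rceil \geq 0$ preserves the inequality, and then adding $\lceil\frac{n}{2}\rceil$ to both sides preserves it as well. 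Chaining these, the right-hand side of the $\sdepth$ bound is at least the right-hand side of the $\depth$ formula, which equals $\depth(S/I(P_n \odot H))$. Hence $\sdepth(S/I(P_n \odot H)) \geq \depth(S/I(P_n \odot H))$, as required.

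I would also note the degenerate cases for completeness: when $n=0$, by Remark~\ref{zeroandtriv} both invariants are $0$; when $H$ is a null graph, the in-particular clause of Theorem~\ref{The1}(b) gives equality of the two sides of the bound with the $\depth$ formula, so the inequality holds trivially. These are quick to dispatch and require no extra argument beyond citing the stated results.

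There is essentially no obstacle here — the corollary is a direct consequence of comparing the formula of part (a) with the bound of part (b), and the only thing to be careful about is that the multiplier $\lceil\frac{n-1}{2}\rceil$ is nonnegative (indeed zero only when $n \leq 1$), so that the inequality between $\sdepth$ and $\depth$ of $K[V(H)]/I(H)$ is transported correctly through the arithmetic. The ``hard part,'' such as it is, was already done in proving Theorem~\ref{The1}; the corollary is purely a matter of assembling those two statements.
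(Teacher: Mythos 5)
Your proposal is correct and is exactly the argument the paper intends: the corollary is stated without proof as an immediate consequence of Theorem \ref{The1}, obtained by comparing the exact depth formula in part (a) with the Stanley depth lower bound in part (b), using $\sdepth(K[V(H)]/I(H))\geq \depth(K[V(H)]/I(H))$ and the nonnegativity of the coefficient $\lceil\frac{n-1}{2}\rceil$. Your handling of the degenerate cases is harmless extra care but not needed beyond what Theorem \ref{The1} already covers.
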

	%	\begin{Remark}\label{b3} \em{ If $H=i(H),$ then   $ \depth(S/I(\mathbb{P}_{n}))=\sdepth(S/I(\mathbb{P}_{n}))=\big\lceil\frac{n}{2}\big\rceil+\big\lceil\frac{n-1}{2}\big\rceil |i(H)|$ by using the similar arguments as in Theorem \ref{The1}.} 	\end{Remark}
	
	%\begin{Corollary}  Let $n\geq 2$ and $I(\mathbb{P}_{n})\subset S:=K[V(\mathbb{P}_{n})].$ If $H=i(H)$, then $$ \depth(S/I(\mathbb{P}_{n}))=\sdepth(S/I(\mathbb{P}_{n}))=\big\lceil\frac{n}{2}\big\rceil+\big\lceil\frac{n-1}{2}\big\rceil |i(H)|.$$ Moreover, if  $\depth(K[V(H)]/I(H))=t\geq 0$, then $$		\pdim(S/I(\mathbb{P}_{n}))= 	n(|i(H)|+|A|+1)-	\big\lceil\frac{n}{2}\big\rceil-\big\lceil\frac{n-1}{2}\big\rceil (t+|i(H)|).$$ 	\end{Corollary}
	
	%	\begin{Corollary}  Let $n\geq 2$ and $I(\mathbb{P}_{n})\subset S:=K[V(\mathbb{P}_{n})].$  Then  \begin{itemize}     \item [(a)] If  $H=i(H)$, then	  $ \depth(S/I(\mathbb{P}_{n}))=\sdepth(S/I(\mathbb{P}_{n}))=\big\lceil\frac{n}{2}\big\rceil+\big\lceil\frac{n-1}{2}\big\rceil |i(H)|.$ \item [(b)] $		\pdim(S/I(\mathbb{P}_{n}))= 	n(|i(H)|+|A|+1)-	\big\lceil\frac{n}{2}\big\rceil-\big\lceil\frac{n-1}{2}\big\rceil (t+|i(H)|).$  \end{itemize}	\end{Corollary}
	
	%	\begin{Corollary} 	Let $n\geq 2$ and $I(\mathbb{P}_{n})\subset S:=K[V(\mathbb{P}_{n})]$. If  $H=i(H)$, then	$$ \depth(S/I(\mathbb{P}_{n}))=\sdepth(S/I(\mathbb{P}_{n}))=\big\lceil\frac{n}{2}\big\rceil+\big\lceil\frac{n-1}{2}\big\rceil |i(H)|.$$	\end{Corollary}	\begin{proof}  If $H$ is a null graph then $t=0$ in this case and the result follows by using similar arguments of Theorem \ref{The1} .\end{proof}
	\begin{Corollary} \label{corsub}	If $n\geq 1$, then  
		%and $\depth(K[V(H)]/I(H))=t$. If $H=N_{a}$ for $a\geq 1$, then
		%	\begin{equation*}	\pdim(S/I(\mathbb{P}_{n}))= 	n(a+1)-	n-(a-1)\bigg\lceil\frac{n-1}{2}\bigg\rceil.	\end{equation*}
		%	If  $H$ is a graph such that $a\geq 0$ and  $b\geq 2$, then 
		$$		\pdim(S/I(P_n \odot H))= 	n(|V(H)|+1)-	\big\lceil\frac{n}{2}\big\rceil-\big\lceil\frac{n-1}{2}\big\rceil \big(\depth(K[V(H)]/I(H)\big)+|i(H)|).$$
	\end{Corollary}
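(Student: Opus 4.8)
The plan is to apply the Auslander--Buchsbaum formula (Lemma \ref{auss13}) to the module $S/I(P_n \odot H)$ over the polynomial ring $S = K[V(P_n \odot H)]$, and then substitute the value of the depth obtained in Theorem \ref{The1}(a). This is the same strategy used just above in the corollary computing $\pdim(S/I(N_1 \odot H))$, so the proof should be short.

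First I would record that $S$ is a polynomial ring in $\dim S = |V(P_n \odot H)|$ variables, and by the formula for the number of vertices of a corona product stated after Definition \ref{defcorona} we have $|V(P_n \odot H)| = |V(P_n)|\,(|V(H)|+1) = n(|V(H)|+1)$. Localizing at the graded maximal ideal $\mathfrak m$, we have $\depth(S_{\mathfrak m}) = \dim S = n(|V(H)|+1)$, and $S/I(P_n \odot H)$ is a nonzero finitely generated module of finite projective dimension (being a module over a polynomial ring, by Hilbert's syzygy theorem). Then Lemma \ref{auss13} gives
\[
\pdim(S/I(P_n \odot H)) = n(|V(H)|+1) - \depth(S/I(P_n \odot H)).
\]

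Finally I would substitute $\depth(S/I(P_n \odot H)) = \big\lceil\tfrac{n}{2}\big\rceil + \big\lceil\tfrac{n-1}{2}\big\rceil\big(\depth(K[V(H)]/I(H)) + |i(H)|\big)$ from Theorem \ref{The1}(a), which immediately yields the claimed expression
\[
\pdim(S/I(P_n \odot H)) = n(|V(H)|+1) - \big\lceil\tfrac{n}{2}\big\rceil - \big\lceil\tfrac{n-1}{2}\big\rceil\big(\depth(K[V(H)]/I(H)) + |i(H)|\big).
\]

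There is essentially no obstacle here: the only minor point to be careful about is that the projective dimension and depth statements in Lemma \ref{auss13} are phrased for local rings, so one should pass to the $\ast$-local setting of the graded maximal ideal (or invoke the graded version of Auslander--Buchsbaum), but this is entirely standard and the rest is a direct substitution. One could also note the edge case $n=1$ is consistent with the earlier corollary, since $P_1 \odot H \cong N_1 \odot H$ and $\lceil 1/2\rceil + \lceil 0/2\rceil(\cdots) = 1$, giving $\pdim = (|V(H)|+1) - 1 = |V(H)|$.
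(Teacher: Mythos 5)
Your proposal is correct and matches the paper's proof exactly: the paper likewise notes $|V(P_n \odot H)| = n(|V(H)|+1)$ and combines Lemma \ref{auss13} (Auslander--Buchsbaum) with the depth value from Theorem \ref{The1}. Your extra remarks about the graded version of the formula and the $n=1$ consistency check are fine but not needed.
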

	\begin{proof}
		As $|V(P_n \odot H)|=n(|V(H)|+1)$.	By using Lemma \ref{auss13} and Theorem \ref{The1}, one can  find the required result.
	\end{proof}
	\begin{Corollary}
		    Let $n,m,s\geq 1$ and $ S=K[V(P_n \odot P_m)]$. Then 
		
		\begin{itemize} 
			\item[(a)] 	$\depth(S/I(P_n \odot P_m))=\big\lceil\frac{n}{2}\big\rceil+\big\lceil\frac{n-1}{2}\big\rceil\big\lceil\frac{m}{3}\big\rceil .$ 
				\item[(b)]  $	\pdim(S/I(P_n \odot P_m))= 	n(m+1)-	\big\lceil\frac{n}{2}\big\rceil-\big\lceil\frac{n-1}{2}\big\rceil\big\lceil\frac{m}{3}\big\rceil .$
			%\item[(c)] 	  $\sdepth(S/I(P_n \odot P_m))\geq \big\lceil\frac{n}{2}\big\rceil+\big\lceil\frac{n-1}{2}\big\rceil\big\lceil\frac{m}{3}\big\rceil.$ 
			\item[(c)]$\depth(K[V(Br_s(P_n))]/I(Br_s(P_n)))=\sdepth(K[V(Br_s(P_n))]/I(Br_s(P_n)))=\big\lceil\frac{n}{2}\big\rceil+\big\lceil\frac{n-1}{2}\big\rceil s.$ 
	%	\noindent Moreover, if $H$ is a null graph, then	 $\sdepth(S/I((P_n \odot H))= \big\lceil\frac{n}{2}\big\rceil+\big\lceil\frac{n-1}{2}\big\rceil |i(H)|.$ 
		\end{itemize}
		\end{Corollary}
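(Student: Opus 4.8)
The plan is to deduce all three statements directly from Theorem~\ref{The1} and Corollary~\ref{corsub} by specializing the graph $H$, using nothing beyond the classical value $\depth(K[V(P_m)]/I(P_m))=\lceil m/3\rceil$ for the edge ideal of a path. The argument is essentially bookkeeping; the only place that needs care is the boundary case $m=1$ of part (a), where $P_1$ coincides with the null graph $N_1$, so that the conventions $\depth(K[V(N_1)]/I(N_1))=0$ and $|i(N_1)|=1$ must be used rather than naively $|i(P_1)|=0$.

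For part (a), I would put $H=P_m$ in Theorem~\ref{The1}(a), obtaining
\[
\depth(S/I(P_n\odot P_m))=\big\lceil\tfrac{n}{2}\big\rceil+\big\lceil\tfrac{n-1}{2}\big\rceil\bigl(\depth(K[V(P_m)]/I(P_m))+|i(P_m)|\bigr),
\]
and then check that $\depth(K[V(P_m)]/I(P_m))+|i(P_m)|=\lceil m/3\rceil$ for every $m\geq 1$: when $m\geq 2$ the path $P_m$ has no isolated vertex, so the left side equals $\depth(K[V(P_m)]/I(P_m))=\lceil m/3\rceil$ by the classical formula, and when $m=1$ the left side equals $0+1=1=\lceil 1/3\rceil$. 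Substituting gives the claimed expression for the depth.

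Part (b) then follows from part (a) together with the Auslander--Buchsbaum formula (Lemma~\ref{auss13}), since $|V(P_n\odot P_m)|=n(m+1)$; equivalently it is Corollary~\ref{corsub} applied with $H=P_m$ and the same evaluation of $\depth(K[V(P_m)]/I(P_m))+|i(P_m)|$. For part (c), I would note that $Br_s(P_n)=P_n\odot N_s$, where $N_s$ is the null graph on $s$ vertices (attaching $s$ pendant vertices to each vertex of $P_n$ is exactly the corona product with $N_s$). Since $N_s$ is a null graph, $\depth(K[V(N_s)]/I(N_s))=0$ and $|i(N_s)|=s$, so Theorem~\ref{The1}(a) gives $\depth(K[V(Br_s(P_n))]/I(Br_s(P_n)))=\lceil n/2\rceil+\lceil(n-1)/2\rceil s$, while the ``in particular'' clause of Theorem~\ref{The1}(b), with $|V(N_s)|=s$, gives the equality $\sdepth(K[V(Br_s(P_n))]/I(Br_s(P_n)))=\lceil n/2\rceil+\lceil(n-1)/2\rceil s$. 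Thus depth and Stanley depth coincide, which completes the plan.
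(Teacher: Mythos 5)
Your proposal is correct and follows exactly the route the paper intends: the corollary is an immediate specialization of Theorem \ref{The1} and Corollary \ref{corsub} to $H=P_m$ (using the known value $\depth(K[V(P_m)]/I(P_m))=\lceil m/3\rceil$, with the paper's convention handling $m=1$) and to $H=N_s$ for the bristled graph, where the ``in particular'' clause gives the Stanley depth. Your explicit care with the $m=1$ boundary case is a nice touch but does not change the argument.
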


\noindent% Let $V(C_n)=\{y_1,y_2,\dots, y_n\}$ and $E(C_{n})=\{\{y_{j},y_{j+1}\}:1\leq j\leq n-1\}\cup \{y_{1},y_{n}\}.$ 	%The minimal generating set for the edge ideal of $C_n \odot H$ is: 	\begin{equation*}	\mathcal{G}(I(C_n \odot H))=\mathcal{G}(I(P_n \odot H))\cup \{y_{n}y_{1}\}.	\end{equation*}
	For $y_{n-1}\in V(C_n \odot H),$ we have the following $K$-algebra isomorphisms:
		\begin{multline}\label{b1}
		   K[V(C_n \odot H)]/(I(C_n \odot H):y_{n-1})\cong\\ K[V(P_{n-3} \odot H)]/I(P_{n-3} \odot H) \underset{l=1}{\overset{2}{\tensor_{K}}} K[V(H)]/I(H) \underset{l=1}{\overset{2}{\tensor_{K}}}K[i(H)] \tensor_{K} K[y_{n-1}],
		\end{multline}
	
		\begin{equation}\label{b2}
			K[V(C_n \odot H)]/(I(C_n \odot H),y_{n-1})\cong K[V(P_{n-1}\odot H)]/I(P_{n-1}\odot H)\tensor_{K} K[V(H)]/I(H) \tensor_{K} K[i(H)].
		\end{equation}   
	\begin{Theorem}\label{The3}
		If $n\geq 3$ and  $ S=K[V(C_n \odot H)],$ then
		\begin{itemize}
			\item[(a)] $\depth(S/I(C_n \odot H))= \big\lceil\frac{n-1}{2}\big\rceil+\big\lceil\frac{n}{2}\big\rceil \big(\depth(K[V(H)]/I(H))+|i(H)|\big).$ 
			\item[(b)] $\sdepth(S/I(C_n \odot H))\geq \big\lceil\frac{n-1}{2}\big\rceil+\big\lceil\frac{n}{2}\big\rceil \big(\sdepth(K[V(H)]/I(H))+|i(H)|\big).$ 	 
			
			\noindent In particular, if $H$ is a null graph, then $$\sdepth(S/I(C_n \odot H))= \big\lceil\frac{n-1}{2}\big\rceil+\big\lceil\frac{n}{2}\big\rceil |V(H)|.$$	
		\end{itemize}

		%			Let $C_{n}$ be a cycle on $n\geq3$ vertices and $a\geq 1$, if $H$ is a null graph then 
		%			%		\begin{eqnarray*}
		%			%		\depth(S_{n(q+1)}/I(C_{n}\odot H))= \left\{
		%			%			\begin{array}{ll}
		%			%				\frac{n(a+1)}{2}, & \hbox{if $n$ is even;} \\\\
		%			%				\frac{(n-1)(a+1)}{2}+a, & \hbox{if $n$ is odd;}\\\\ \lceil\frac{n-1}{2}\rceil+\lceil\frac{n}{2}\rceil t& \hbox{otherwise.}
		%			%			\end{array}\right.	\end{eqnarray*}
		%			
		%			\begin{equation*}
		%				\depth(S/I(\mathbb{C}_{n}))= n+(a-1)\bigg\lceil\frac{n}{2}\bigg\rceil,	\end{equation*}
		%			otherwise, $$	\depth(S/I(\mathbb{C}_{n}))= \big\lceil\frac{n-1}{2}\big\rceil+\big\lceil\frac{n}{2}\big\rceil t.$$
	\end{Theorem}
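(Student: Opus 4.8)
The plan is to single out the vertex $y_{n-1}$ of $C_n\odot H$ and exploit the short exact sequence
\[
0\longrightarrow S/(I(C_n\odot H):y_{n-1})\xrightarrow{\,\cdot y_{n-1}\,} S/I(C_n\odot H)\longrightarrow S/(I(C_n\odot H),y_{n-1})\longrightarrow 0,
\]
together with the isomorphisms \eqref{b1} and \eqref{b2}, which reduce the cyclic case to the path case already handled in Theorem \ref{The1}. For the depth one evaluates both outer modules exactly via Lemma \ref{LEMMA1.5} and Lemma \ref{le3} and then feeds the comparison into Lemma \ref{exacther}; for the Stanley depth one uses the inequality half of Lemma \ref{LEMMA1.5} and Lemma \ref{rle2}(b), upgrading to equality in the null case through Lemma \ref{exacthersdepth}.

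\emph{Part (a).} Write $t:=\depth(K[V(H)]/I(H))$ and $d:=t+|i(H)|$. Applying Lemma \ref{LEMMA1.5} and Lemma \ref{le3} to \eqref{b1} gives $\depth\!\big(S/(I(C_n\odot H):y_{n-1})\big)=\depth\!\big(K[V(P_{n-3}\odot H)]/I(P_{n-3}\odot H)\big)+2d+1$, and applying them to \eqref{b2} gives $\depth\!\big(S/(I(C_n\odot H),y_{n-1})\big)=\depth\!\big(K[V(P_{n-1}\odot H)]/I(P_{n-1}\odot H)\big)+d$. Substituting the values from Theorem \ref{The1}(a) (with Remark \ref{zeroandtriv} covering $n=3$, where $P_{n-3}=P_0$, and Lemma \ref{triv} covering $n=4$) and using the elementary ceiling identities $\lceil\tfrac{n-3}{2}\rceil+1=\lceil\tfrac{n-1}{2}\rceil$, $\lceil\tfrac{n-4}{2}\rceil+2=\lceil\tfrac{n}{2}\rceil$ and $\lceil\tfrac{n-2}{2}\rceil+1=\lceil\tfrac{n}{2}\rceil$, both expressions collapse to the common value $\lceil\tfrac{n-1}{2}\rceil+\lceil\tfrac{n}{2}\rceil\,d$. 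In particular $\depth(S/(I(C_n\odot H),y_{n-1}))\ge\depth(S/(I(C_n\odot H):y_{n-1}))$, so Lemma \ref{exacther} yields $\depth(S/I(C_n\odot H))=\lceil\tfrac{n-1}{2}\rceil+\lceil\tfrac{n}{2}\rceil\,d$, as claimed.

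\emph{Part (b).} Repeating the same computation with $\sdepth$ in place of $\depth$, now using the inequality in Lemma \ref{LEMMA1.5} together with Theorem \ref{The1}(b), shows that both $\sdepth(S/(I(C_n\odot H):y_{n-1}))$ and $\sdepth(S/(I(C_n\odot H),y_{n-1}))$ are at least $\lceil\tfrac{n-1}{2}\rceil+\lceil\tfrac{n}{2}\rceil\big(\sdepth(K[V(H)]/I(H))+|i(H)|\big)$, and Lemma \ref{rle2}(b) transfers this bound to $\sdepth(S/I(C_n\odot H))$. When $H$ is a null graph every factor $K[V(H)]/I(H)$ occurring in \eqref{b1} and \eqref{b2} is just $K$, so the analogues of those isomorphisms present the two outer modules as $K[V(P_{n-3}\odot H)]/I(P_{n-3}\odot H)$ (resp. $K[V(P_{n-1}\odot H)]/I(P_{n-1}\odot H)$) tensored with a polynomial ring; Lemma \ref{le3} then makes the Stanley-depth computation exact, both outer modules have Stanley depth exactly $\lceil\tfrac{n-1}{2}\rceil+\lceil\tfrac{n}{2}\rceil|V(H)|$ by Theorem \ref{The1}(b), and Lemma \ref{exacthersdepth} upgrades the inequality to the stated equality.

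\emph{Main obstacle.} I do not expect a genuine difficulty: all the real content sits in the isomorphisms \eqref{b1} and \eqref{b2}, which do the reduction from the cycle to paths, after which everything is bookkeeping with ceilings and a direct appeal to Theorem \ref{The1}. The only points that need a little care are the small cases $n=3,4$ (where $P_{n-3}$ degenerates to $P_0$ or $P_1$ and one must invoke Remark \ref{zeroandtriv} or Lemma \ref{triv} rather than the generic formula) and checking that the three ceiling identities above hold for every $n\ge 3$, which one sees at once by separating the cases $n$ even and $n$ odd.
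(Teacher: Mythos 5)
Your proposal is correct and follows essentially the same route as the paper: reduce via the colon and sum ideals at $y_{n-1}$ using the isomorphisms \eqref{b1} and \eqref{b2}, evaluate both with Lemma \ref{le3}, Lemma \ref{LEMMA1.5} and Theorem \ref{The1}, then conclude by Lemma \ref{exacther} for depth, by Lemma \ref{rle2} for the Stanley depth bound, and by Lemma \ref{exacthersdepth} in the null case. Your explicit treatment of the degenerate cases $n=3,4$ (where $P_{n-3}$ is $P_0$ or $P_1$) is a small point the paper leaves implicit, but the argument is the same.
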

	\begin{proof}
	Firstly, we discuss the result for depth. Let  $t:=\depth(K[V(H)]/I(H)).$		By using  Theorem \ref{The1}, Lemma \ref{le3} and Lemma \ref{LEMMA1.5} on Eq. \ref{b1} and Eq. \ref{b2}, we get
 \begin{equation*}
     \begin{split}
         &\depth(S/(I(C_n \odot H):y_{n-1}))\\&\quad=\depth(K[V(P_{n-3} \odot H)]/I(P_{n-3} \odot H)) +2\depth( K[V(H)]/I(H)) +2\depth(K[i(H)])\\&\quad \quad+ \depth(K[y_{n-1}])\\&\quad=\lceil\frac{n-3}{2}\rceil+\lceil\frac{n-4}{2}\rceil \big(t+|i(H)|\big)+2t+2|i(H)|+1= \lceil\frac{n-1}{2}\rceil+\lceil\frac{n}{2}\rceil (t+|i(H)|),
     \end{split}
 \end{equation*} \begin{equation*}
     \begin{split}
         &\depth(S/(I(C_n \odot H),y_{n-1}))\\&\quad =\depth(K[V(P_{n-1}\odot H)]/I(P_{n-1}\odot H))+ \depth(K[V(H)]/I(H)) +\depth( K[i(H)])\\&\quad= \lceil\frac{n-1}{2}\rceil+\lceil\frac{n-2}{2}\rceil \big(t+|i(H)|\big)+t+|i(H)|\\&\quad= \lceil\frac{n-1}{2}\rceil+\lceil\frac{n}{2}\rceil (t+|i(H)|).
     \end{split}
 \end{equation*}
		Using Lemma \ref{exacther}, completes the proof for depth. 
		
		Now, we prove the result  for Stanley depth. If $H$ is not a null graph, then we get the required lower bound for Stanley depth by applying Lemma \ref{le3}, Lemma \ref{LEMMA1.5}, Theorem \ref{The1} and Lemma \ref{rle2} on Eq. \ref{b1} and Eq. \ref{b2}.	But if $H$ is a null graph, then  by  Eq. \ref{b1} and Eq. \ref{b2} we have the following $K$-algebra isomorphisms: 
  \begin{equation*}
       S/(I(C_n \odot H):y_{n-1})\cong K[V(P_{n-3} \odot H)]/I(P_{n-3} \odot H)  \underset{l=1}{\overset{2}{\tensor_{K}}}K[V(H)] \tensor_{K} K[y_{n-1}],
  \end{equation*}
		\begin{equation*}
			S/(I(C_n \odot H),y_{n-1})\cong K[V(P_{n-1}\odot H)]/I(P_{n-1}\odot H)\tensor_{K} K[V(H)],
		\end{equation*} and the proof  is similar to depth by using Lemma \ref{exacthersdepth} instead of Lemma \ref{exacther}.  
		%		If $ \mathbb{C}_n$ is $|i(H)|$-fold bristled graph then $t=0$ and result obtain by using the similar arguments.
		%	\end{description}
	\end{proof}%\noindent Stanley's inequality holds for $S/I(\mathbb{C}_{n})$ if it holds for  $K[V(H)]/I(H).$
	\begin{Corollary}
	 Stanley's inequality holds for $S/I(C_n \odot H)$ if it holds for  $K[V(H)]/I(H).$
	\end{Corollary}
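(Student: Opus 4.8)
The plan is to deduce the claim directly from Theorem \ref{The3}, in the same spirit as the corollary following Theorem \ref{The1}. Set $d:=\depth(K[V(H)]/I(H))$ and $s:=\sdepth(K[V(H)]/I(H))$, and assume Stanley's inequality holds for $K[V(H)]/I(H)$, i.e.\ $s\geq d$.

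First I would apply Theorem \ref{The3}(b) to obtain
\[
\sdepth(S/I(C_n\odot H))\ \geq\ \Big\lceil\tfrac{n-1}{2}\Big\rceil+\Big\lceil\tfrac{n}{2}\Big\rceil\big(s+|i(H)|\big).
\]
Since $n\geq 3$, the integer $\lceil n/2\rceil$ is nonnegative (indeed $\geq 2$), so $s\geq d$ gives $\lceil n/2\rceil(s+|i(H)|)\geq \lceil n/2\rceil(d+|i(H)|)$, whence
\[
\sdepth(S/I(C_n\odot H))\ \geq\ \Big\lceil\tfrac{n-1}{2}\Big\rceil+\Big\lceil\tfrac{n}{2}\Big\rceil\big(d+|i(H)|\big).
\]
By Theorem \ref{The3}(a) the right-hand side equals $\depth(S/I(C_n\odot H))$ exactly, so $\sdepth(S/I(C_n\odot H))\geq\depth(S/I(C_n\odot H))$, which is Stanley's inequality for $S/I(C_n\odot H)$.

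There is essentially no obstacle beyond this bookkeeping: the only point worth isolating is that the coefficient $\lceil n/2\rceil$ multiplying the ``$H$-contribution'' is the \emph{same} in the exact depth formula of Theorem \ref{The3}(a) and in the Stanley-depth lower bound of Theorem \ref{The3}(b), and that it is nonnegative, so the affine map $x\mapsto\lceil (n-1)/2\rceil+\lceil n/2\rceil\,(x+|i(H)|)$ is monotone and transports the inequality $s\geq d$ to the corona product. (One should also note that the statement is for $n\geq 3$, which is precisely the range where Theorem \ref{The3} and the cycle $C_n$ are defined.)
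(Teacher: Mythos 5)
Your argument is correct and is exactly the intended deduction: the paper states this corollary as an immediate consequence of Theorem \ref{The3}, using the monotonicity of the common expression $\lceil\frac{n-1}{2}\rceil+\lceil\frac{n}{2}\rceil(x+|i(H)|)$ to pass from $\sdepth(K[V(H)]/I(H))\geq\depth(K[V(H)]/I(H))$ to the corona product. Nothing further is needed.
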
 
	%\begin{Remark}	\em{If $H=i(H),$ then 	$\depth(S/I(\mathbb{C}_{n}))=\sdepth(S/I(\mathbb{C}_{n}))=\big\lceil\frac{n-1}{2}\big\rceil+\big\lceil\frac{n}{2}\big\rceil|i(H)|$ by using similar strategy of Theorem \ref{The3}.}\end{Remark}
	\begin{Corollary} \label{corsub}
		Let $n\geq 3.$ Then

		%	If $S_{n(q+1)}=K[V(C_{n}),\cup ^{n}_{c=1}V(H_{c})]$ and  $I(C_{n}\odot H)\subset S$, then
		%		\begin{eqnarray*}
		%		\pdim(S_{n(q+1)}/I(C_{n}\odot H))=\left\{
		%		\begin{array}{ll}
		%		n(a+1)-	\frac{n(a+1)}{2}, & \hbox{if $n$ is even and $b= 0$;} \\\\
		%		n(a+1)-	\frac{(n-1)(a+1)}{2}-a, & \hbox{if $n$ is odd and $b= 0$;}\\\\ n(q+1)-\lceil\frac{n-1}{2}\rceil+\lceil\frac{n}{2}\rceil t& \hbox{otherwise.}
		%		\end{array}\right.	\end{eqnarray*}
		
		\begin{equation*}
			\pdim(S/I(C_n \odot H))= 	n(|V(H)|+1)-	\big\lceil\frac{n-1}{2}\big\rceil-\big\lceil\frac{n}{2}\big\rceil \big(\depth(K[V(H)]/I(H))+|i(H)|\big).	\end{equation*}
		
	\end{Corollary}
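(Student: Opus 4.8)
The plan is to obtain this formula as an immediate consequence of the Auslander--Buchsbaum formula (Lemma \ref{auss13}) combined with the depth computation of Theorem \ref{The3}(a). First I would record that $S=K[V(C_n\odot H)]$ is a polynomial ring whose number of indeterminates is $|V(C_n\odot H)|$, and by the vertex-count formula for corona products listed just after Definition \ref{defcorona} this equals $|V(C_n)|\bigl(|V(H)|+1\bigr)=n(|V(H)|+1)$; hence $\depth(S)=n(|V(H)|+1)$.

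Next, since $I(C_n\odot H)$ is a monomial ideal, $S/I(C_n\odot H)$ is a nonzero finitely generated $\mathbb{Z}$-graded $S$-module of finite projective dimension, so (in the graded local setting, or after localizing at the graded maximal ideal $\mathfrak{m}$) Lemma \ref{auss13} yields
$$\pdim\bigl(S/I(C_n\odot H)\bigr)+\depth\bigl(S/I(C_n\odot H)\bigr)=\depth(S)=n(|V(H)|+1).$$
Solving for $\pdim$ and substituting the value $\depth(S/I(C_n\odot H))=\big\lceil\frac{n-1}{2}\big\rceil+\big\lceil\frac{n}{2}\big\rceil\bigl(\depth(K[V(H)]/I(H))+|i(H)|\bigr)$ from Theorem \ref{The3}(a) gives exactly the asserted expression.

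I do not anticipate any genuine obstacle here: the entire mathematical content sits in Theorem \ref{The3}, and this corollary is a one-line rearrangement of Auslander--Buchsbaum. The only minor points warranting care are that $n\geq 3$ is exactly the hypothesis under which Theorem \ref{The3} is valid, and that Lemma \ref{auss13} is being applied in the graded version (equivalently, after passing to $S_{\mathfrak{m}}$), which is standard and harmless since all modules involved are finitely generated and $\mathbb{Z}$-graded.
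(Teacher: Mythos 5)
Your proposal is correct and matches the paper's own argument: the paper likewise combines the vertex count $|V(C_n\odot H)|=n(|V(H)|+1)$ with the Auslander--Buchsbaum formula (Lemma \ref{auss13}) and the depth value from Theorem \ref{The3}(a). No further comment is needed.
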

	\begin{proof}
		As $|V(C_n \odot H)|=n(|V(H)|+1)$.	By using Lemma \ref{auss13} and Theorem \ref{The3}, one can  find the required result.
	\end{proof}
	
	%	\begin{Corollary}	Let $n\geq 3$ and $S:=K[V(\mathbb{C}_{n})]$ be a polynomial ring. Stanley's inequality holds for $S/I(\mathbb{C}_{n})$ if it holds for  $K[V(H)]/I(H).$	\end{Corollary}

%	Let $K_{n}$ be the complete graph on $n$ vertices say $V(H)=\{y_{1},\dots , y_{n}\}.$ %The minimal generating set for the edge ideal of 
%	Denote $\mathbb{K}_{n}=K_{n}\odot H$, where $H$ be any graph.
	\begin{Corollary}
		Let $n,m\geq3$, $s\geq1$ and $S=K[V(C_n \odot C_m)].$ Then
		\begin{itemize}
			\item[(a)] $\depth(S/I(C_n \odot C_m))= \big\lceil\frac{n-1}{2}\big\rceil+\big\lceil\frac{n}{2}\big\rceil \big\lceil\frac{m-1}{3}\big\rceil.$ 
		%	\item[(b)] 	 $\sdepth(S/I(C_n \odot C_m))\geq \big\lceil\frac{n-1}{2}\big\rceil+\big\lceil\frac{n}{2}\big\rceil \big\lceil\frac{m-1}{3}\big\rceil.$ 
			\item[(b)] $		\pdim(S/I(C_n \odot C_m))= n(m+1)-\big\lceil\frac{n-1}{2}\big\rceil-\big\lceil\frac{n}{2}\big\rceil \big\lceil\frac{m-1}{3}\big\rceil.$
			\item[(c)]$\depth(K[V(Br_s(C_n))]/I(Br_s(C_n)))=\sdepth(K[V(Br_s(C_n))]/I(Br_s(C_n)))=\big\lceil\frac{n-1}{2}\big\rceil-\big\lceil\frac{n}{2}\rceil s.$ 
		\end{itemize}
		
	\end{Corollary}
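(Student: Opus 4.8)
The plan is to obtain all three parts as immediate specializations of Theorem \ref{The3}. For parts (a) and (b) I would take $H=C_m$, and for part (c) I would take $H=N_s$, the null graph on $s$ vertices, invoking the Remark that identifies $Br_s(C_n)$ with $C_n\odot N_s$. No new machinery is needed; the only point of care is tracking the ceiling functions.

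For part (a), observe first that for $m\geq 3$ the cycle $C_m$ has no isolated vertices, so $i(C_m)=\emptyset$ and the summand $|i(H)|$ appearing in Theorem \ref{The3}(a) vanishes. It then suffices to substitute the value $\depth(K[V(C_m)]/I(C_m))=\big\lceil\frac{m-1}{3}\big\rceil$ (the known formula for the depth of the edge ideal of a cycle). Theorem \ref{The3}(a) then reads $\depth(S/I(C_n\odot C_m))=\big\lceil\frac{n-1}{2}\big\rceil+\big\lceil\frac{n}{2}\big\rceil\big\lceil\frac{m-1}{3}\big\rceil$, which is the claim.

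For part (b), since $|V(C_n\odot C_m)|=n(|V(C_m)|+1)=n(m+1)$, the Auslander--Buchsbaum formula (Lemma \ref{auss13}) gives $\pdim(S/I(C_n\odot C_m))=n(m+1)-\depth(S/I(C_n\odot C_m))$; plugging in the value from part (a) yields the stated expression (this is also the $H=C_m$ instance of the projective-dimension corollary following Theorem \ref{The3}). For part (c), $N_s$ is a null graph, so $\depth(K[V(N_s)]/I(N_s))=\sdepth(K[V(N_s)]/I(N_s))=0$ while $|i(N_s)|=|V(N_s)|=s$. Feeding these into Theorem \ref{The3}(a) and into its ``in particular'' clause (which records the exact value of $\sdepth$ when $H$ is a null graph) gives the asserted equalities for both $\depth$ and $\sdepth$ of $K[V(Br_s(C_n))]/I(Br_s(C_n))$.

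The only genuinely external ingredient is the depth formula $\big\lceil\frac{m-1}{3}\big\rceil$ for edge ideals of cycles; everything else is bookkeeping with ceilings, and the index shifts $n-3\mapsto n$ and $n-1\mapsto n$ have already been absorbed inside the proof of Theorem \ref{The3}. Hence I anticipate no substantive obstacle: the corollary is essentially a direct substitution.
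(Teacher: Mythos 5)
Your proposal is correct and is exactly the intended derivation: the paper states this corollary without proof, and it follows precisely as you say by substituting $H=C_m$ (using the known value $\depth(K[V(C_m)]/I(C_m))=\big\lceil\frac{m-1}{3}\big\rceil$ and the Auslander--Buchsbaum formula via Lemma \ref{auss13}) and $H=N_s$ into Theorem \ref{The3}. One small point: your substitution in part (c) actually yields $\big\lceil\frac{n-1}{2}\big\rceil+\big\lceil\frac{n}{2}\big\rceil s$, so the minus sign in the stated corollary is a typographical error which your argument silently corrects rather than reproduces.
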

\noindent %Let $V(K_n)= \{y_{1},y_{2},\dots , y_{n}\}$ and edge set $E(K_n)=\{\{y_{k},y_{l}\}: 1\leq k<l\leq n\}.$
%	The minimal generating set for the edge ideal of $K_{n}\odot H$ is as follows:\begin{multline*}	\mathcal{G}(I(K_{n}\odot H))=\big(\cup^{n}_{j=1}\{y_{j}x_{j1}, \dots ,y_{j}x_{j(i(H)+|A|)}\}\cup^{n}_{j=1}\{x_{jl}x_{jk}\in E(H_{j})| ~ \text{iff}~ x_{l}x_{k}\in E(H)) \}\\ \cup\{y_{d}y_{e}: 1\leq d<e\leq n: d,e\in V(K_n) \}\big).	\end{multline*}
		For $y_{j}\in V(K_{n}\odot H),$ where $y_{j}$ is an arbitrary vertex of $K_n$, we get the following isomorphisms:  
		\begin{equation}\label{c1}
			K[V(K_{n}\odot H)]/(I(K_{n}\odot H):y_{j})\cong  \underset{l=1}{\overset{n-1}{\tensor_K}} K[V(H)]/I(H) \underset{l=1}{\overset{n-1}{\tensor_K}}K[i(H)] \tensor_K K[y_{j}],
		\end{equation} 
		\begin{equation}\label{c2}
			K[V(K_{n}\odot H)]/(I(K_{n}\odot H),y_{j})\cong K[V(K_{n-1}\odot H)]/I(K_{n-1}\odot H)\tensor_K K[V(H)]/I(H) \tensor_K K[i(H)],
		\end{equation}
	\begin{Theorem}\label{The119}	Let $n\geq1$ and $ S=K[V(K_{n}\odot H)].$ Then
		\begin{itemize}
			\item [(a)]
			$	\depth(S/I(K_{n}\odot H))=1+(n-1)\big(\depth(K[V(H)]/I(H))+|i(H)|\big).$ \item[(b)]  $\sdepth(S/I(K_{n}\odot H))\geq 1+(n-1)\big(\sdepth(K[V(H)]/I(H))+|i(H)|\big).$

		\noindent	In particular, if 	$H$ is a null graph, then  $$	\sdepth(S/I(K_{n}\odot H))=1+(n-1)|V(H)|.$$
	
		\end{itemize}
		
	\end{Theorem}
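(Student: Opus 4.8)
The plan is to prove parts (a) and (b) simultaneously by induction on $n$, in complete parallel with the proofs of Theorem \ref{The1} and Theorem \ref{The3}, using the $K$-algebra isomorphisms \eqref{c1} and \eqref{c2} as the engine of the induction. Write $t:=\depth(K[V(H)]/I(H))$. For the base case $n=1$ one has $K_1\odot H\cong N_1\odot H$, so Lemma \ref{triv} gives $\depth(S/I(K_1\odot H))=\sdepth(S/I(K_1\odot H))=1$, which is precisely the claimed value $1+(1-1)\big(t+|i(H)|\big)$.

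For the inductive step I would take $n\geq 2$ and fix an arbitrary vertex $y_j$ of $K_n$; since $I(K_n\odot H)$ is generated in degree two, $y_j\notin I(K_n\odot H)$. Applying Lemma \ref{le3} and Lemma \ref{LEMMA1.5} to \eqref{c1} should give
\[
\depth\big(S/(I(K_n\odot H):y_j)\big)=(n-1)\big(t+|i(H)|\big)+1,
\]
while applying Lemma \ref{le3}, Lemma \ref{LEMMA1.5} and the induction hypothesis (Theorem \ref{The119}(a) for $n-1$) to \eqref{c2} should give
\[
\depth\big(S/(I(K_n\odot H),y_j)\big)=\big[1+(n-2)\big(t+|i(H)|\big)\big]+t+|i(H)|=1+(n-1)\big(t+|i(H)|\big).
\]
These two values coincide, so in particular $\depth\big(S/(I(K_n\odot H),y_j)\big)\geq\depth\big(S/(I(K_n\odot H):y_j)\big)$, and Lemma \ref{exacther} then yields $\depth\big(S/I(K_n\odot H)\big)=1+(n-1)\big(t+|i(H)|\big)$, proving (a).

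For (b), when $H$ is not a null graph I would use the short exact sequence $0\to S/(I(K_n\odot H):y_j)\xrightarrow{\,\cdot y_j\,}S/I(K_n\odot H)\to S/(I(K_n\odot H),y_j)\to 0$ together with Lemma \ref{rle2}(b) to reduce to bounding the Stanley depth of the two end modules; by \eqref{c1}, \eqref{c2}, the Stanley-depth inequality in Lemma \ref{LEMMA1.5}, Lemma \ref{le3} and the induction hypothesis, each of these is at least $1+(n-1)\big(\sdepth(K[V(H)]/I(H))+|i(H)|\big)$, which is the asserted bound. When $H$ is a null graph, \eqref{c1} and \eqref{c2} collapse to
\[
S/(I(K_n\odot H):y_j)\cong\underset{l=1}{\overset{n-1}{\tensor_K}}K[V(H)]\tensor_K K[y_j],\qquad S/(I(K_n\odot H),y_j)\cong K[V(K_{n-1}\odot H)]/I(K_{n-1}\odot H)\tensor_K K[V(H)],
\]
where the first module is a polynomial ring of Stanley depth $(n-1)|V(H)|+1$ and Lemma \ref{LEMMA1.5} with the induction hypothesis bounds the Stanley depth of the second from below by $1+(n-1)|V(H)|$; hence Lemma \ref{exacthersdepth} applies with $f=y_j$ and produces the exact value, the base case $n=1$ being $K_1\odot H\cong S_{|V(H)|}$, handled by Lemma \ref{leAli}.

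I expect no serious obstacle: the computations are the same bookkeeping as in Theorems \ref{The1} and \ref{The3}. The one load-bearing observation is that $(I(K_n\odot H):y_j)$ and $(I(K_n\odot H),y_j)$ yield the same depth, which is exactly what is needed to invoke Lemma \ref{exacther}; in the Stanley-depth part, the analogous equality in the null-graph case is what allows Lemma \ref{exacthersdepth} to upgrade the lower bound to an exact value.
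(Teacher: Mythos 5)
Your proposal is correct and follows essentially the same route as the paper: induction on $n$ using the isomorphisms \eqref{c1} and \eqref{c2}, Lemmas \ref{le3} and \ref{LEMMA1.5} to compute the depths of the colon and sum modules, Lemma \ref{exacther} (resp. Lemma \ref{exacthersdepth}) to conclude, and Lemma \ref{rle2} for the Stanley depth lower bound when $H$ is not null. The only cosmetic difference is that the paper also treats $n=2$ as a base case via $K_2\cong P_2$, whereas your induction starting at $n=1$ (Lemma \ref{triv}) already suffices.
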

	\begin{proof}
		%		We consider two cases. \begin{description}
		%		\item[Case 1] Let $b=0$. If $m=3$ and $n=1$ by using CoCoA \cite{cocoa}, we have $	\depth(S_{3(1+1)}/I(K_{3}\odot H))=3$. If $m=3$ and $n=2$ by using CoCoA \cite{cocoa}, we have $	\depth(S_{3(2+1)}/I(K_{3}\odot H))=5$. If $m=3$ and $n=3$ by using CoCoA \cite{cocoa}, we have $	\depth(S_{3(3+1)}/I(K_{3}\odot H))=7$. If $m=3$ and $n\geq1$, we have $	\depth(S_{3(n+1)}/I(K_{3}\odot H))=3+2(n-1).$ We can prove this by induction on $m.$ 
		%		For $w_{i}\in K_{3}$, by using  Lemmas \ref{le3} and \ref{le6} we have 	$$\depth(S_{m(n+1)}/I(K_{m}\odot H):w_{i})=1+2n,$$ 
		%		$$\depth(S_{m(n+1)}/I(K_{m}\odot H),w_{i})=n+n+1.$$ Hence, by  Lemma \ref{le41}, we have $	\depth(S_{m(n+1)}/I(K_{m}\odot H))=1+2n.$ Now for $m\geq 3$ and $n\geq1$, we will prove this by induction on $m.$ 
		%		For $w_{i}\in K_{m}$, by using  Lemma \ref{le3}  we have 	$$\depth(S_{m(n+1)}/I(K_{m}\odot H):w_{i})=1+(m-1)n,$$ 
		%		$$\depth(S_{m(n+1)}/I(K_{m}\odot H),w_{i})=n+1+(m-2)n=1+(m-1)n.$$ Hence, by  Lemma \ref{le41}, we have $	\depth(S_{m(n+1)}/I(K_{m}\odot H))=1+(m-1)n.$
		%		\item[Case 2] For $b\neq 0$, w
		%		
We first prove the result for depth.	Let $t:=\depth(K[V(H)]/I(H)).$	If $ n=1,2,$ $K_1\cong N_1$ and $K_2\cong P_2,$ we get the required result  by using Lemma \ref{triv} and Theorem \ref{The1}. %If $n=3$, Let $n=3$ and $|i(H)|\geq1$. For $y_{2} \in C_{3}$, we get
		%			$\big(K[V(\mathbb{C}_{3}/y_{2})]/I(\mathbb{C}_{3}/y_{2}))\big[\overline{Q}_{\mathbb{C}_{3}}(y_{2}) ] \cong \underset{l=1}{\overset{2}{\tensor}}K[i(H)] \tensor K[y_{2}],$
		%			so by Lemma \ref{le3} and \ref{LEMMA1.5}, we have
		%			$\depth(S/(I(\mathbb{C}_{3}):y_{2}))= 2|i(H)|+1.$ Also $K[V(\mathbb{C}_{3}\backslash y_{2})]/I({(\mathbb{C}_{3})}\backslash y_{2})[P_{\mathbb{C}_{3}}(y_{2})]\cong K[V(\mathbb{P}_{2})]/I(\mathbb{P}_{2})\tensor K[i(H)] ,$ so by using  Lemma \ref{le3} and Theorem \ref{The1}, it follows that
		%			$\depth(S/(I(\mathbb{C}_{n}),y_{n-1}))=2|i(H)|+1.$
		%			By applying Lemma \ref{le41}, we have the required result. 
		%			Let $n\geq3$ and $y_{n}\in \mathbb{K}_{n}$., consider the following short exact sequence
		%			\begin{equation}\label{161}
		%				0\longrightarrow S/(I(\mathbb{K}_{n}):y_{n})\xrightarrow{\cdot y_{n}} S/I(\mathbb{K}_{n})\longrightarrow S/(I(\mathbb{K}_{n}),y_{n})\longrightarrow 0,
		%			\end{equation}
		%			by Depth Lemma
		%			\begin{equation*}
		%				\depth(S/I(\mathbb{K}_{n}))\geq \min\{\depth(S/(I(\mathbb{K}_{n}):y_{n})), \depth(S/(I(\mathbb{K}_{n}),y_{n})\}.
		%			\end{equation*}
		%			Here
	By  applying Lemma \ref{le3}, Lemma \ref{LEMMA1.5} on Eq. \ref{c1} and Eq. \ref{c2}, we have
 \begin{equation*}
     \begin{split}
         \depth( S/(I(K_n \odot H):y_{j}))&=(n-1) \depth(K[V(H)]/I(H))+ (n-1)\depth(K[i(H)])\\&\quad\quad +\depth( K[y_{j}])\\ &=(n-1)t+(n-1)|i(H)|+1\\&= 1+(n-1)(t+|i(H)|)
     \end{split}
 \end{equation*}
	and by induction on $n,$ \begin{equation*}
	    \begin{split}
	         \depth(  S/(I(K_n \odot H),y_{j}))&=\depth(K[V(K_{n-1}\odot H)]/I(K_{n-1}\odot H))+\depth( K[V(H)]/I(H))  \\&\quad \quad+\depth(K[i(H)])\\&\quad=1+(n-2)\big(t+|i(H)|\big)+t+|i(H)|\\&\quad=1+(n-1)(t+|i(H)|).
	    \end{split}
	\end{equation*} 
		Thus the required result for the depth follows by Lemma \ref{exacther}.

		Next, to prove the result for Stanley depth, consider if $H$ is not a null graph, then we get the required inequality for Stanley depth by applying Lemma \ref{le3}, Lemma \ref{LEMMA1.5},   and Lemma \ref{rle2} on  Eq. \ref{c1} and Eq. \ref{c2}.
	If	$H$ is a null graph, then  by   Eq. \ref{c1} and Eq. \ref{c2},  
		\begin{equation*}
			S/(I(K_n \odot H):y_{j})\cong   \underset{l=1}{\overset{n-1}{\tensor_K}}K[V(H)] \tensor_K K[y_{j}],
		\end{equation*} 
		\begin{equation*}
			S/(I(K_n \odot H),y_{j})\cong K[V(K_{n-1}\odot H)]/I(K_{n-1}\odot H) \tensor_K K[V(H)], \end{equation*} and we get the desired result for Stanley depth similarly as we obtained for depth just by replacing Lemma \ref{exacther} by Lemma \ref{exacthersdepth}.
		
		%\end{description}
	\end{proof} 
		
	\begin{Corollary}  Stanley's inequality holds for $S/I(K_n \odot H)$ if it holds for  $K[V(H)]/I(H).$
	\end{Corollary}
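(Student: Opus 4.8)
The plan is to read the conclusion straight off the two parts of Theorem~\ref{The119}, since the corona product with $K_n$ already packages both the exact depth value and a matching Stanley depth lower bound. Part~(a) gives
$$\depth(S/I(K_n \odot H)) = 1 + (n-1)\big(\depth(K[V(H)]/I(H)) + |i(H)|\big),$$
and part~(b) gives
$$\sdepth(S/I(K_n \odot H)) \geq 1 + (n-1)\big(\sdepth(K[V(H)]/I(H)) + |i(H)|\big).$$
The entire argument therefore reduces to comparing the two right-hand sides, which differ only by swapping $\depth(K[V(H)]/I(H))$ for $\sdepth(K[V(H)]/I(H))$.

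Next I would invoke the hypothesis: Stanley's inequality for $K[V(H)]/I(H)$ says $\sdepth(K[V(H)]/I(H)) \geq \depth(K[V(H)]/I(H))$. Because $n \geq 1$, the coefficient $n-1$ is nonnegative, so multiplying this inequality by $n-1$ and adding the common summand $1 + (n-1)|i(H)|$ to both sides preserves its direction:
$$1 + (n-1)\big(\sdepth(K[V(H)]/I(H)) + |i(H)|\big) \geq 1 + (n-1)\big(\depth(K[V(H)]/I(H)) + |i(H)|\big).$$
Chaining this with the lower bound of part~(b) on the left and the exact formula of part~(a) on the right gives $\sdepth(S/I(K_n \odot H)) \geq \depth(S/I(K_n \odot H))$, which is precisely Stanley's inequality for $S/I(K_n \odot H)$.

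There is essentially no obstacle; the only subtlety worth a sentence is that the coefficient $n-1$ must be nonnegative, which is ensured by the standing hypothesis $n \geq 1$ (and the boundary case $n=1$ is the trivial graph case, where both invariants equal $1$ by Lemma~\ref{triv}, so the claim is immediate). The same one-line template — set the exact depth formula against the parallel $\sdepth$ lower bound whose remaining summands coincide — is exactly what establishes the analogous corollaries stated after Theorem~\ref{The1} and Theorem~\ref{The3}.
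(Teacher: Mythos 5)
Your argument is correct and is exactly the intended one: the paper states this corollary as an immediate consequence of Theorem \ref{The119}, obtained by comparing the exact depth formula in part (a) with the Stanley depth lower bound in part (b) under the hypothesis $\sdepth(K[V(H)]/I(H))\geq\depth(K[V(H)]/I(H))$. Nothing further is needed.
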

	%	\begin{Remark}	\em{	If $H=i(H),$ then $	\depth(S/I(\mathbb{K}_{n}))=\sdepth(S/I(\mathbb{K}_{n}))= 1+(n-1)|i(H)|.$ For $n=2,$ we use  Remark \ref{b3} and  by imitating  the proof of Theorem \ref{The119} we get the required equality. }	\end{Remark}
	\begin{Corollary} \label{corsub}
		Let $n\geq 1.$  Then 	
		\begin{equation*}
			\pdim(S/I(K_n \odot H))=
			n(|V(H)|+1)-1-(n-1)\big(\depth(K[V(H)]/I(H))+|i(H)|\big).
		\end{equation*}
	\end{Corollary}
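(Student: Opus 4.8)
The plan is to apply the Auslander--Buchsbaum formula (Lemma \ref{auss13}) to the module $S/I(K_n \odot H)$ over the ring $S = K[V(K_n \odot H)]$. First I would observe that $S$ is a polynomial ring in $|V(K_n \odot H)| = n(|V(H)|+1)$ variables, so that after localizing at the graded maximal ideal $\mathfrak{m}$ we have $\depth(S_{\mathfrak m}) = n(|V(H)|+1)$; since $S/I(K_n\odot H)$ is a finitely generated graded module, its projective dimension is finite and the formula $\pdim(S/I(K_n\odot H)) + \depth(S/I(K_n\odot H)) = \depth(S)$ applies.

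Next I would substitute the value of the depth computed in Theorem \ref{The119}(a), namely $\depth(S/I(K_n \odot H)) = 1 + (n-1)\big(\depth(K[V(H)]/I(H)) + |i(H)|\big)$. Rearranging gives
\begin{equation*}
\pdim(S/I(K_n \odot H)) = n(|V(H)|+1) - 1 - (n-1)\big(\depth(K[V(H)]/I(H)) + |i(H)|\big),
\end{equation*}
which is exactly the claimed identity. So the proof is essentially a one-line consequence of the preceding theorem together with the Auslander--Buchsbaum formula, following the same pattern already used in the analogous corollaries for $P_n \odot H$ and $C_n \odot H$.

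There is no real obstacle here; the only minor point worth checking is that the hypotheses of Lemma \ref{auss13} are met, i.e. that $S/I(K_n\odot H)$ is nonzero (clear, since $I(K_n\odot H)$ is a proper ideal) and of finite projective dimension (automatic over a polynomial ring by Hilbert's syzygy theorem). The statement of Lemma \ref{auss13} is phrased for a local ring, so strictly one passes to the localization at $\mathfrak m$, but depth and projective dimension of a finitely generated graded module are unchanged under this localization, so the conclusion transfers back to the graded setting without difficulty.
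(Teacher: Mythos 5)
Your proposal is correct and follows exactly the paper's own argument: note that $|V(K_n \odot H)| = n(|V(H)|+1)$, apply the Auslander--Buchsbaum formula (Lemma \ref{auss13}), and substitute the depth value from Theorem \ref{The119}(a). The extra remarks about finiteness of projective dimension and passing to the localization are fine but routine, and do not change the route.
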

	\begin{proof}
		As $|V(K_n \odot H)|=	n(|V(H)|+1)$.	By using Lemma \ref{auss13} and Theorem \ref{The119}, one can  find the required result.
	\end{proof}
	\begin{Corollary}	Let $n,m,s\geq1$ and $ S=K[V(K_{n}\odot K_{m})].$ Then
		\begin{itemize}
			\item [(a)]
			$	\depth(S/I(K_{n}\odot K_{m}))=n.$ %\item[(b)]  $\sdepth(S/I(K_{n}\odot K_{m}))\geq n.$
			\item[(b)] $\pdim(S/I(K_n \odot K_m))=
			nm.$
			\item[(c)]$\depth(K[V(Br_s(K_n))]/I(Br_s(K_n)))=\sdepth(K[V(Br_s(K_n))]/I(Br_s(K_n)))=1+(n-1)s.$
		\end{itemize}
	\end{Corollary}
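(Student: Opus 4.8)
All three parts are obtained by feeding the special choices $H=K_m$ and $H=N_s$ into Theorem \ref{The119} together with the Auslander--Buchsbaum formula; no new idea is needed.

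For (a) I would apply Theorem \ref{The119}(a) with $H=K_m$, which gives $\depth(S/I(K_n\odot K_m))=1+(n-1)\bigl(\depth(K[V(K_m)]/I(K_m))+|i(K_m)|\bigr)$. If $m\ge 2$, then every vertex of $K_m$ has degree $m-1\ge 1$, so $|i(K_m)|=0$, while $\depth(K[V(K_m)]/I(K_m))=1$; the latter is classical (the independence number of $K_m$ is $1$, hence $\dim(K[V(K_m)]/I(K_m))=1$ by Lemma \ref{prok}, and a generic linear form is a nonzerodivisor, so $K_m$ is Cohen--Macaulay; alternatively one gets it from Lemma \ref{exacther} applied to $(I(K_m):x_m)=(x_1,\dots,x_{m-1})$ with induction on $m$). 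Thus the right-hand side is $1+(n-1)=n$. If $m=1$, then $K_1=N_1$ is the null graph on one vertex, so $\depth(K[V(K_1)]/I(K_1))=0$ and $|i(K_1)|=1$, and the right-hand side is again $1+(n-1)\cdot 1=n$. This settles (a) for every $m\ge 1$.

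For (b) I would invoke the Auslander--Buchsbaum formula (Lemma \ref{auss13}) with $R=S$: since $|V(K_n\odot K_m)|=n(m+1)$ we have $\depth(S)=n(m+1)$, whence by (a) $\pdim(S/I(K_n\odot K_m))=n(m+1)-\depth(S/I(K_n\odot K_m))=n(m+1)-n=nm$; equivalently one substitutes $H=K_m$ into Corollary \ref{corsub}. For (c) I would first recall that the $s$-fold bristled graph satisfies $Br_s(K_n)=K_n\odot N_s$, then apply Theorem \ref{The119}(a) with $H=N_s$; using $\depth(K[V(N_s)]/I(N_s))=0$ and $|i(N_s)|=s$ this yields $\depth(K[V(Br_s(K_n))]/I(Br_s(K_n)))=1+(n-1)s$. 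Since $N_s$ is a null graph, the ``in particular'' clause of Theorem \ref{The119}(b) gives the matching value $\sdepth(K[V(Br_s(K_n))]/I(Br_s(K_n)))=1+(n-1)|V(N_s)|=1+(n-1)s$, which finishes (c).

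The only step that is not a pure substitution is the identity $\depth(K[V(K_m)]/I(K_m))=1$ for $m\ge 2$, which is standard; apart from that, the sole point to watch is the degenerate case $m=1$, where $K_1$ must be treated as the null (equivalently, trivial) graph so that the formula of Theorem \ref{The119} still outputs the claimed value $n$. I therefore do not expect any genuine obstacle.
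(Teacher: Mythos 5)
Your proposal is correct and follows essentially the same route the paper intends: the corollary is a direct specialization of Theorem \ref{The119} (with $H=K_m$, resp.\ $H=N_s$) together with the Auslander--Buchsbaum formula / Corollary \ref{corsub}, and your handling of the standard fact $\depth(K[V(K_m)]/I(K_m))=1$ and of the degenerate case $m=1$ (where $K_1$ is the trivial graph, so $|i(K_1)|=1$) is exactly what is needed.
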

	%	\subsection{Corona product of star graph with any graph H}
	%	\section{Depth, Stanley depth and regularity of residue class ring of edge ideals of corona product of star graph and any graph $H$}
	%Let $S_{n}$ be a star graph on $n+1$ vertices, say $y_1$ be the internal vertex such that $V(S_{n})=\{y_{1},\dots , y_{n+1}\}.$ %The minimal generating set for the edge ideal of 
%	For any graph $H,$	we denote $\mathbb{S}_{n}=S _{n}\odot H.$  %$\mathbb{S}_{n+1}=S_{n+1}\odot H$ 
	%	The minimal generating set for the edge ideal of $\mathbb{S}_{n+1}=S_{n+1}\odot H$ is as follows:
	%\begin{eqnarray*}	\mathcal{H}(I(\mathbb{S}_{n+1}))=\big(\cup^{n+1}_{j=1}\{y_{j}x_{j1},y_{j}x_{j2}, \dots ,y_{j}x_{j(i(H)+|A|)}\}\cup^{n+1}_{j=1}\{x_{jl}x_{jk}\in E(H_{j})| ~ \text{iff}~ x_{l}x_{k}\in E(H) \}\\ \cup\{y_1y_{j}:2\leq j\leq n+1\}\big).	\end{eqnarray*}
\noindent % Let $V(S_n)=\{y_{1},y_{j}:2\leq j\leq n+1\}$ and edge set $E(S_n)=\{\{y_{1},y_{j}\} :2\leq j\leq n+1\}$ with root vertex $y_1.$ 
%The minimal generating set for the edge ideal of $S_n \odot H$ is as follows:\begin{multline*}	\mathcal{G}(I(S_n \odot H))=\big(\cup^{n+1}_{j=1}\{y_{j}x_{j1},y_{j}x_{j2}, \dots ,y_{j}x_{j(i(H)+|A|)}\}\cup^{n+1}_{j=1}\{x_{jl}x_{jk}\in E(H_{j})| ~ \text{iff}~ x_{l}x_{k}\in E(H) \}\\ \cup\{y_1y_{j}:2\leq j\leq n+1\}\big).\end{multline*}

\begin{Theorem}\label{The1199} Let $S=K[V(S_n \odot H)]$. Then
	\begin{itemize}
		\item[(a)] 	 $	\depth(S/I(S_n \odot H))= n+\depth(K[V(H)]/I(H))+|i(H)|.$  \item[(b)] $\sdepth(S/I(S_n \odot H))\geq n+\sdepth(K[V(H)]/I(H))+|i(H)|.$ 
		
	\noindent	In particular, if $H$ is a null graph, then  $$\sdepth(S/I(S_n \odot H))= n+|V(H)|.$$
	\end{itemize}
\end{Theorem}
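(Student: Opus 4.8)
The plan is to proceed exactly as in the proofs of Theorem \ref{The1}, Theorem \ref{The3} and Theorem \ref{The119}, namely by picking a convenient vertex of $S_n\odot H$, forming the two short exact sequences associated to the colon and quotient ideals, and invoking Lemma \ref{exacther} (for depth) together with Lemma \ref{exacthersdepth} and Lemma \ref{rle2} (for Stanley depth). Write $t:=\depth(K[V(H)]/I(H))$. Recall $S_n$ is the $n$-star: it has a center $c$ of degree $n$ and $n$ leaves $y_1,\dots,y_n$. In $S_n\odot H$, the center $c$ is joined to all $n$ leaves, to the vertices of its own copy $H_0$ of $H$, and nothing else; each leaf $y_i$ is joined to $c$ and to the vertices of its copy $H_i$.

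The first step is to record the two $K$-algebra isomorphisms obtained by cutting at a leaf vertex, say $y_n$ (or, more efficiently, at the center $c$). Cutting at a leaf $y_n$: $(I(S_n\odot H):y_n)$ contains $c$ together with the variables of $i(H_n)$ and the generators of $I(H_n')$, and collapses the copy $H_n$; after removing $c$ the remaining graph on the other leaves together with the center's copy $H_0$ is itself (up to the deleted data) governed by a smaller star, so one gets a tensor decomposition involving $K[V(S_{n-1}\odot H)]/I(S_{n-1}\odot H)$ (or a null-graph remainder when $n=1$), a factor $K[V(H)]/I(H)$ coming from $H_n$, and polynomial factors $K[i(H)\cup\{y_n\}]$. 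Dually, $(I(S_n\odot H),y_n)$ is the edge ideal of $S_{n-1}\odot H$ together with an isolated copy $H_n$ contributing $K[V(H)]/I(H)\tensor_K K[i(H)]$. Alternatively, and probably cleaner, one can cut directly at the center $c$: then $(I(S_n\odot H):c)$ kills all the leaves $y_i$ and the copy $H_0$, leaving $n$ disjoint copies $H_1,\dots,H_n$ of $H$ plus the variable $c$, giving $\bigtensor_{i=1}^n K[V(H)]/I(H)\tensor_K \bigtensor_{i=1}^n K[i(H)]\tensor_K K[c]$; and $(I(S_n\odot H),c)$ is the edge ideal of the disjoint union of $N_1\odot H_i$ for $i=1,\dots,n$ together with the isolated copy $H_0$, i.e. $n$ disjoint $N_1\odot H$'s plus $K[V(H)]/I(H)\tensor_K K[i(H)]$.

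With the center-cut, Lemma \ref{le3} and Lemma \ref{LEMMA1.5} give $\depth(S/(I(S_n\odot H):c))=n\,t+n|i(H)|+1=n+(n-1)(t+|i(H)|)+\big(t+|i(H)|+1-1\big)$; more simply one computes directly $\depth(S/(I(S_n\odot H):c))=nt+n|i(H)|+1$, while using Lemma \ref{triv} and the disjoint-union additivity of depth, $\depth(S/(I(S_n\odot H),c))=n\cdot 1+t+|i(H)|=n+t+|i(H)|$. Comparing, $n+t+|i(H)|\le nt+n|i(H)|+1$ whenever $n\ge 1$ and $t+|i(H)|\ge 1$, i.e. whenever $H$ is not a null graph; so Lemma \ref{exacther} yields $\depth(S/I(S_n\odot H))=n+t+|i(H)|$ in that case. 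If $H$ is a null graph, $S_n\odot H$ is the $|V(H)|$-fold bristled star and a separate direct argument (or the leaf-cut recursion, using Lemma \ref{leAli} for the base) gives $\depth=\sdepth=n+|V(H)|=n+t+|i(H)|$ since then $t=0$ and $|i(H)|=|V(H)|$; this also handles (b) with equality in the null case. For Stanley depth in the general case, replace Lemma \ref{exacther} by Lemma \ref{exacthersdepth} and Lemma \ref{rle2}: $\sdepth(S/I(S_n\odot H))\ge\min\{\sdepth(S/(I:c)),\sdepth(S/(I,c))\}$ and, by Lemma \ref{LEMMA1.5} and Lemma \ref{rle2}, both terms are $\ge n+\sdepth(K[V(H)]/I(H))+|i(H)|$, giving the stated lower bound.

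The only genuine subtlety — the "main obstacle" — is getting the combinatorial bookkeeping of the colon and quotient ideals right, in particular keeping track of which copies of $H$ are collapsed versus left intact and how the isolated vertices $i(H)$ are redistributed, and handling the degenerate cases $n=1$ (where $S_1\cong P_2$, or where the "remainder star" $S_0$ is a single vertex) and $H$ a null graph. Everything else is a mechanical application of the machinery already set up: Lemma \ref{le3}, Lemma \ref{LEMMA1.5}, Lemma \ref{virtens}, Lemma \ref{rle2}, Lemma \ref{exacther}, Lemma \ref{exacthersdepth}, together with the additivity of depth over tensor products and the fact that $\depth$ and $\sdepth$ are unchanged by adjoining isolated vertices as polynomial variables. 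One should double-check the inequality $\depth(S/(I,c))\le\depth(S/(I:c))$ (and its Stanley-depth analogue) in the edge case where $H$ has no edges but also $i(H)=\emptyset$, which cannot occur for a genuine graph, so no issue arises; and the edge case $t+|i(H)|=0$, i.e. $H$ null, is treated separately as above.
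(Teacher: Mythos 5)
Your center-cut argument does not prove part (a): it only establishes the lower bound. With the cut at the center $c$ you correctly get $\depth(S/(I(S_n\odot H):c))=n\big(t+|i(H)|\big)+1$ and $\depth(S/(I(S_n\odot H),c))=n+t+|i(H)|$, so the quotient depth is \emph{at most} the colon depth (strictly smaller as soon as $n\ge 2$ and $t+|i(H)|\ge 2$, since the difference is $(n-1)(t+|i(H)|-1)$). Lemma \ref{exacther} says the opposite of what you use it for: its hypothesis is $\depth(S/(I,f))\ge\depth(S/(I:f))$ and its conclusion is that $\depth(S/I)$ equals the \emph{colon} depth; it gives no license to conclude $\depth(S/I)=\depth(S/(I,c))$ when the inequality points the other way (and even the full three-case statement of Caviglia et al.\ leaves the case $\depth(S/(I,c))=\depth(S/(I:c))-1$, e.g.\ $n=2$, $t+|i(H)|=2$, undecided). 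What your cut genuinely yields is $\depth(S/I(S_n\odot H))\ge n+t+|i(H)|$ via Lemma \ref{rle2}; the missing ingredient is the matching upper bound. The paper supplies it by coloning with the product of all leaves, $u=y_2y_3\cdots y_{n+1}$, for which $S/(I(S_n\odot H):u)\cong K[\{y_2,\dots,y_{n+1}\}\cup i(H)]\otimes_K K[V(H)]/I(H)$, so Lemma \ref{Cor7}(a) gives $\depth(S/I(S_n\odot H))\le n+|i(H)|+t$; the same colon together with Lemma \ref{Cor7}(b) is also what forces the Stanley depth \emph{equality} in the null-graph case, which you leave to an unspecified ``separate direct argument.''

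Two further slips. First, your description of the leaf cut is wrong: coloning by a leaf $y_n$ puts the center into the ideal, so the remaining graph splits into $n-1$ disjoint copies of $N_1\odot H$ together with a free copy of $H$ (this is exactly the isomorphism the paper uses); it is \emph{not} ``governed by a smaller star'' $K[V(S_{n-1}\odot H)]/I(S_{n-1}\odot H)$ --- only the quotient $(I,y_n)$ produces $S_{n-1}\odot H$. With the correct leaf-cut colon the hypothesis of Lemma \ref{exacther} does hold ($n-1+2(t+|i(H)|)\ge n+t+|i(H)|$ whenever $t+|i(H)|\ge1$), so that route, run by induction on $n$, would close the depth computation without the $u$-colon; but as written your proposal proves only the lower bound. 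Second, the side remark ``$t+|i(H)|\ge 1$ iff $H$ is not a null graph'' is backwards: a null graph with at least one vertex has $|i(H)|=|V(H)|\ge1$, so the condition holds for every nonempty $H$; this is harmless in itself but shows the case analysis was not checked.
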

\begin{proof}
Let  $t:=\depth(K[V(H)]/I(H)).$	First we will prove the depth result. If $1\leq n\leq 3$, then the result follows from Lemma \ref{triv} and Theorem \ref{The1}.		
	Let $n\geq4.$ For $y_{n+1}\in S_n \odot H$, 
	we have the following isomorphisms: 
	\begin{equation*}
	    S/(I(S_n \odot H):y_{n+1})\cong\\ \underset{l=1}{\overset{n-1}{\tensor_K}}K[V(N_1 \odot H)]/I(N_1 \odot H)\tensor_K K[i(H)\cup \{y_{n+1}\}] \tensor_K K[V(H)]/I(H).\end{equation*}
	Therefore, by Lemma \ref{le3}, Lemma \ref{LEMMA1.5} and  Lemma \ref{triv}, we have 
\begin{equation*}
    \begin{split}
         \depth( S/(I(S_n \odot H):y_{n+1}))&=(n-1)\depth(K[V(N_1 \odot H)]/I(N_1 \odot H))+ \depth( K[i(H)\cup \{y_{n+1}\}]) \\&\quad\quad+ \depth(K[V(H)]/I(H))\\&= 
	n+t+|i(H)|.
    \end{split}
\end{equation*}	
It can also be seen that $$S/(I(S_n \odot H),y_{n+1})\cong K[V(S_{n-1}\odot H)]/I(S_{n-1}\odot H)\tensor_K K[i(H)] \tensor_K K[V(H)]/I(H).$$
	Therefore, by using induction,  Lemma \ref{le3} and Lemma \ref{LEMMA1.5},  we get \begin{equation*}
	    \begin{split}
	         \depth(S/(I(S_n \odot H),y_{n+1}))&=\depth(K[V(S_{n-1}\odot H)]/I(S_{n-1}\odot H))+\depth(K[i(H)])\\&\quad\quad+ \depth(K[V(H)]/I(H))\\&= n-1+t+|i(H)|+t+|i(H)|\\&=2(t+|i(H)|)+n-1.
	    \end{split}
	\end{equation*}
	  By using Lemma \ref{rle2},  we get $	\depth(S/(I(S_n \odot H)) \geq n+t+|i(H)|.$
	%	Thus the required result follows by applying  Lemma \ref{le41}.%\end{description}
	For the other inequality, let $u:=y_2y_3\cdots y_{n+1}\notin I(S_n \odot H),$ we have the following isomorphism:
	\begin{equation}\label{c3}
		S/(I(S_n \odot H):u)\cong K[\{y_{2},y_{3}, \dots, y_{n+1}\}\cup i(H)]\tensor_K K[V(H)]/I(H). 
	\end{equation}	
	%	where $S'''=K[V(S_n \odot H)\backslash \{\underset{i=2}{\overset{n+1}{\cup}}\overline{N}_{S_n \odot H}(y_i)\cup \underset{i=2}{\overset{n+1}{\cup}}Q_{S_n \odot H}(y_{i})\}].$
	By applying Lemma  \ref{le3} and Lemma  \ref{Cor7} on Eq. \ref{c3}, we get \begin{equation*}
	    \begin{split}
	         \depth (S/I(S_n \odot H))	&\leq  \depth (S/(I(S_n \odot H):u))\\& \quad=\depth(K[\{y_{2},y_{3}, \dots, y_{n+1}\}\cup i(H)])+\depth( K[V(H)]/I(H))\\& \quad=n+|i(H)|+t.
	    \end{split}
	\end{equation*}
Hence $	\depth (S/I(S_n \odot H))=n+|i(H)|+t.$

Now, we prove the result for Stanley depth. If $H$ is not a null graph, then the required lower bound of Stanley depth is obtain in a similar way as for depth by using induction, Lemma \ref{le3}, Lemma \ref{LEMMA1.5}, Lemma \ref{triv} and Lemma  \ref{rle2}. Let $H$ be a null graph, we have a following isomorphisms:
$$S/(I(S_n \odot H),y_{n+1})\cong K[V(S_{n-1}\odot H)]/I(S_{n-1}\odot H)\tensor_K K[V(H)],$$
 \begin{equation*}
	    S/(I(S_n \odot H):y_{n+1})\cong \underset{l=1}{\overset{n-1}{\tensor_K}}K[V(N_1 \odot H)]/I(N_1 \odot H)\tensor_K K[V(H)\cup \{y_{n+1}\}],
	\end{equation*}
 
 	\begin{equation*}
		S/(I(S_n \odot H):u) \cong K[V(H)\cup \{y_{2},y_{3}, \dots, y_{n+1}\}]. 
	\end{equation*}
 The  proof for  Stanley depth is similar to depth.
 
\end{proof}%\noindent  If	Stanley's inequality holds for $K[V(H)]/I(H),$ then it also holds  for $S/I(S_n \odot H).$ 

\begin{Corollary}
	If	Stanley's inequality holds for $K[V(H)]/I(H),$ then it also holds  for $S/I(S_n \odot H).$
\end{Corollary}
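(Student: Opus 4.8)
The plan is to obtain the statement as an immediate comparison of parts (a) and (b) of Theorem \ref{The1199}. Recall that Stanley's inequality for a module $A$ is the assertion $\sdepth(A)\geq\depth(A)$. First I would record, from Theorem \ref{The1199}(a), the exact value
$$\depth(S/I(S_n \odot H))= n+\depth(K[V(H)]/I(H))+|i(H)|,$$
and, from Theorem \ref{The1199}(b), the lower bound
$$\sdepth(S/I(S_n \odot H))\geq n+\sdepth(K[V(H)]/I(H))+|i(H)|.$$

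Next, under the hypothesis that Stanley's inequality holds for $K[V(H)]/I(H)$, that is $\sdepth(K[V(H)]/I(H))\geq\depth(K[V(H)]/I(H))$, I would substitute this estimate into the displayed lower bound to obtain
$$\sdepth(S/I(S_n \odot H))\geq n+\sdepth(K[V(H)]/I(H))+|i(H)|\geq n+\depth(K[V(H)]/I(H))+|i(H)|=\depth(S/I(S_n \odot H)),$$
which is precisely $\sdepth(S/I(S_n \odot H))\geq\depth(S/I(S_n \odot H))$, i.e.\ Stanley's inequality for $S/I(S_n \odot H)$.

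There is essentially no obstacle here: all of the content lies in Theorem \ref{The1199}, and the corollary is a one-line consequence of the fact that the same terms $n$ and $|i(H)|$ are added to both the depth and the Stanley-depth expressions, so any inequality between the corresponding invariants of $K[V(H)]/I(H)$ propagates verbatim to the corona product $S_n\odot H$. The only point worth stating explicitly is that we do not need the exact value of $\sdepth(S/I(S_n \odot H))$, only the lower bound in part (b), together with the equality in part (a).
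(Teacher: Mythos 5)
Your argument is correct and is exactly the intended one: the corollary follows immediately by combining the exact depth value in Theorem \ref{The1199}(a) with the lower bound in Theorem \ref{The1199}(b) and the hypothesis $\sdepth(K[V(H)]/I(H))\geq\depth(K[V(H)]/I(H))$. This matches the paper's (implicit) reasoning, which states the corollary as a direct consequence of that theorem.
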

\begin{Corollary} \label{corsub}
	If $n\geq 1$, then
	\begin{equation*}
		\pdim(S/I(S_n \odot H))=
		(n+1)(|V(H)|+1)-n-\depth(K[V(H)]/I(H))-|i(H)|.
	\end{equation*}
\end{Corollary}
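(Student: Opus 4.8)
The plan is to apply the Auslander--Buchsbaum formula (Lemma~\ref{auss13}) to the ring $S/I(S_n \odot H)$, exactly as in the preceding corollaries for $P_n$, $C_n$ and $K_n$. First I would record that the number of variables is $|V(S_n \odot H)| = |V(S_n)|\cdot(|V(H)|+1) = (n+1)(|V(H)|+1)$, since the star $S_n$ has $n+1$ vertices (one center of degree $n$ together with its $n$ leaves), and the corona construction attaches one copy of $H$ to each of these $n+1$ vertices. Thus $\depth(S) = (n+1)(|V(H)|+1)$.

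Next I would invoke Theorem~\ref{The1199}(a), which gives $\depth(S/I(S_n \odot H)) = n + \depth(K[V(H)]/I(H)) + |i(H)|$. Since $S$ is a polynomial ring, $S/I(S_n\odot H)$ has finite projective dimension, so Lemma~\ref{auss13} yields
\begin{equation*}
\pdim(S/I(S_n \odot H)) = \depth(S) - \depth(S/I(S_n \odot H)) = (n+1)(|V(H)|+1) - n - \depth(K[V(H)]/I(H)) - |i(H)|,
\end{equation*}
which is precisely the claimed formula. The only subtlety worth flagging is the base-case convention: for small $n$ the ring $S/I(S_n\odot H)$ is still nonzero, so Auslander--Buchsbaum applies without incident, and Theorem~\ref{The1199} already covers $1 \le n \le 3$ via Lemma~\ref{triv} and Theorem~\ref{The1}.

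There is essentially no obstacle here: the statement is a direct corollary, and the proof is a two-line computation once the vertex count $(n+1)(|V(H)|+1)$ is correctly identified. The one place to be careful is the indexing of the star --- writing $|V(S_n)| = n+1$ rather than $n$ --- which is consistent with the definition of a $k$-star given in Section~\ref{section1} (a tree with $k$ leaves and one further vertex of degree $k$) and with the way $y_{n+1}$ is used as the center vertex in the proof of Theorem~\ref{The1199}. I would write the proof as: ``As $|V(S_n \odot H)| = (n+1)(|V(H)|+1)$, the result follows by using Lemma~\ref{auss13} and Theorem~\ref{The1199}.''
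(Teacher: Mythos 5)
Your proposal is correct and follows exactly the paper's own argument: record $|V(S_n \odot H)|=(n+1)(|V(H)|+1)$, take the depth value from Theorem \ref{The1199}(a), and apply the Auslander--Buchsbaum formula (Lemma \ref{auss13}). Nothing further is needed.
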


\begin{proof}
	As $|V(S_n \odot H)|=(n+1)(|V(H)|+1)$.	By using Lemma \ref{auss13} and Theorem \ref{The1199}, the required result follows.
\end{proof}
\begin{Corollary} Let $n,m,s\geq 1$ and $ S=K[V(S_n \odot S_m)]$. Then
	\begin{itemize}
		\item[(a)] 	 $	\depth(S/I(S_n \odot S_m))= n+1.$  %\item[(b)] $\sdepth(S/I(S_n \odot S_m))\geq n+1.$   
		\item[(b)] $	\pdim(S/I(S_n \odot S_m))=
		(n+1)(m+1).$
			\item[(c)]$\depth(K[V(Br_s(S_n))]/I(Br_s(S_n)))=\sdepth(K[V(Br_s(S_n))]/I(Br_s(S_n)))=n+s.$ 
	\end{itemize}
\end{Corollary}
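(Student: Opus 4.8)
The corollary is a direct specialization of Theorem~\ref{The1199} and the preceding Corollary~\ref{corsub} to the cases $H=S_m$ and $H=N_s$, so the plan is simply to read off the relevant invariants of the building blocks and substitute. For parts (a) and (b), I would take $H=S_m$. Since $S_m$ is a star with $m\geq 1$ leaves, every vertex has degree at least one, hence $i(S_m)=\emptyset$ and $|i(S_m)|=0$; and $\depth(K[V(S_m)]/I(S_m))=1$ by Lemma~\ref{leAli} (equivalently by Lemma~\ref{triv}, using $S_m\cong N_1\odot N_m$). Substituting into Theorem~\ref{The1199}(a) gives $\depth(S/I(S_n\odot S_m))=n+1+0=n+1$, which is (a). For (b), one can either evaluate the preceding Corollary~\ref{corsub} at $H=S_m$ with $|i(S_m)|=0$ and $\depth(K[V(S_m)]/I(S_m))=1$, or combine (a) with the Auslander--Buchsbaum formula (Lemma~\ref{auss13}): since $|V(S_n\odot S_m)|=|V(S_n)|(|V(S_m)|+1)=(n+1)(m+2)$, we obtain $\pdim(S/I(S_n\odot S_m))=(n+1)(m+2)-(n+1)=(n+1)(m+1)$.

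For part (c), I would use the identification $Br_s(S_n)\cong S_n\odot N_s$ coming from the remark that introduces the bristled-graph notation, so that the claim becomes a statement about $S_n\odot N_s$ with $N_s$ the null graph on $s$ vertices. Since $N_s$ is a null graph, $\depth(K[V(N_s)]/I(N_s))=\sdepth(K[V(N_s)]/I(N_s))=0$ and $|i(N_s)|=|V(N_s)|=s$. Plugging $H=N_s$ into Theorem~\ref{The1199}(a) gives $\depth(K[V(Br_s(S_n))]/I(Br_s(S_n)))=n+0+s=n+s$, while the ``in particular'' clause of Theorem~\ref{The1199}(b)---which applies precisely because $N_s$ is a null graph---gives $\sdepth(K[V(Br_s(S_n))]/I(Br_s(S_n)))=n+s$; hence the two invariants coincide and equal $n+s$.

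There is no genuine obstacle here: the whole argument is a bookkeeping exercise of extracting the invariants of $S_m$ and $N_s$ (their sets of isolated vertices and the depth/Stanley depth of the associated edge-ideal quotients) and substituting into Theorem~\ref{The1199}, Corollary~\ref{corsub}, and Lemma~\ref{auss13}. The only point deserving a moment's care is confirming $i(S_m)=\emptyset$ for every $m\geq 1$, so that the $|i(H)|$-term vanishes in parts (a) and (b); everything else is mechanical.
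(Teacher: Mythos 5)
Your proposal is correct and is exactly the intended argument: the paper states this corollary without a separate proof, as a direct specialization of Theorem \ref{The1199} (and the accompanying projective-dimension corollary via Lemma \ref{auss13}) to $H=S_m$ with $i(S_m)=\emptyset$, $\depth(K[V(S_m)]/I(S_m))=1$, and to $H=N_s$ with $|i(N_s)|=s$ for the bristled graph. Your bookkeeping, including $|V(S_n\odot S_m)|=(n+1)(m+2)$ and the use of the ``null graph'' clause for the Stanley depth in (c), matches the paper's conventions.
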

	\noindent %Let $K_{u,v}$ denotes the complete bipartite graph with partite sets $K_u=\{y_{1},y_{2},\dots, y_{u}\}$, $K_v=\{y_{u+1},y_{u+2},\dots, y_{u+v}\},$ and edge set $E(K_{u,v})=\{\{y_{i},y_{j}\}|~~1\leq i\leq u ~ \text{and}~u+1\leq j\leq u+ v\}.$
	%The minimal generating set for the edge ideal of $K_{u,v} \odot H$ is as follows:\begin{multline*}		\mathcal{G}(I(K_{u,v} \odot H))=\big(\cup^{u}_{i=1}\{y_{i}x_{i1},y_{i}x_{i2}, \dots ,y_{i}x_{i(|i(H)|+|A|)}\}\cup^{u+v}_{j=u+1}\{y_{j}x_{j1},y_{j}x_{j2}, \dots ,y_{j}x_{j(|i(H)|+|A|)}\}\\ \cup\{y_{i}y_{j}: 1\leq i\leq u ~ \text{and}~u+1\leq j\leq u+v \}\cup^{u+v}_{j=1}\{x_{jl}x_{jk}\in E(H_{j})| ~ \text{iff}~ x_{l}x_{k}\in E(H)\}\big).\end{multline*}
For $y_{u+v}\in V(K_{u,v} \odot H),$ we have the following isomorphism: 
\begin{multline}\label{e1}
    	K[V(K_{u,v} \odot H)]/(I(K_{u,v} \odot H):y_{u+v})\cong \\ \underset{l=u+1}{\overset{u+v-1}{\tensor_K}}K[V(N_1 \odot H)]/I(N_1 \odot H) \underset{l=1}{\overset{u}{\tensor_K}} K[i(H)]\underset{l=1}{\overset{u}{\tensor_K}} K[V(H)]/I(H)  \tensor_K K[y_{u+v}],
\end{multline}
	\begin{multline}\label{e2}
		K[V(K_{u,v} \odot H)]/(I(K_{u,v} \odot H),y_{u+v})\cong\\ K[V(K_{u,v-1}\odot H)]/I(K_{u,v-1}\odot H) \tensor_K K[i(H)] \tensor_K K[V(H)]/I(H) .
	\end{multline}
\begin{Theorem}\label{The1177}
If  $u,v \in \mathbb{Z}^+$ and $S=K[V(K_{u,v} \odot H)],$  then
	\begin{itemize}
		\item [(a)] $	\depth(S/I(K_{u,v} \odot H))=\min\{u,v\}\big(\depth(K[V(H)]/I(H))+|i(H)|\big)+\max\{u,v\}.$
		\item[(b)] $\sdepth(S/I(K_{u,v} \odot H))\geq \min\{u,v\}\big(\sdepth(K[V(H)]/I(H))+|i(H)|\big)+\max\{u,v\}.$
		
	\noindent	In particular, if $H$ is a null graph then  $$\sdepth(S/I(K_{u,v} \odot H))= \min\{u,v\}|V(H)|+\max\{u,v\}.$$
	%	Otherwise,   $\sdepth(S/I(K_{u,v} \odot H))\geq \min\{u,v\}\big(\sdepth(K[V(H)]/I(H))+|i(H)|\big)+\max\{u,v\}.$
	\end{itemize}
\end{Theorem}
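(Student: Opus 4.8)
The plan is to mirror the inductive strategy already used for $P_n\odot H$, $C_n\odot H$, $K_n\odot H$ and $S_n\odot H$, but now inducting on $v$ (assuming without loss of generality that $v=\max\{u,v\}$, since $K_{u,v}\cong K_{v,u}$; the case $u=v$ is symmetric). First I would handle the base cases: when $v=1$ the graph $K_{u,1}$ is the star $S_u$ (a single vertex joined to $u$ vertices), so $K_{u,1}\odot H$ is covered by Theorem~\ref{The1199}, and one checks that $n+\depth(K[V(H)]/I(H))+|i(H)|$ with $n=u$ agrees with $\min\{u,1\}(\depth(K[V(H)]/I(H))+|i(H)|)+\max\{u,1\}$ only when $u=1$; for $u\geq 2$ one needs $v\geq 2$ as the genuine base, so I would take $v=1$ together with small $u$ via Theorem~\ref{The1199} and Lemma~\ref{triv} as the anchor, being careful about the degenerate identifications $K_{1,1}\cong P_2$.

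For the inductive step, set $t:=\depth(K[V(H)]/I(H))$ and apply Lemma~\ref{exacther} to the variable $y_{u+v}$, using the two isomorphisms \eqref{e1} and \eqref{e2} already displayed in the excerpt. From \eqref{e1}, Lemma~\ref{le3} and Lemma~\ref{LEMMA1.5} give
\begin{equation*}
\depth(S/(I(K_{u,v}\odot H):y_{u+v}))=(v-1)\cdot 1+u(t+|i(H)|)+u\cdot 0+1=u(t+|i(H)|)+v,
\end{equation*}
where the $(v-1)\cdot 1$ comes from the $v-1$ tensor factors $K[V(N_1\odot H)]/I(N_1\odot H)$ each of depth $1$ by Lemma~\ref{triv}. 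From \eqref{e2}, the inductive hypothesis applied to $K_{u,v-1}\odot H$ (still with $v-1\geq\max\{u,v-1\}$ in the relevant range, or with roles swapped when $v-1<u$, which is exactly the delicate point) gives
\begin{equation*}
\depth(S/(I(K_{u,v}\odot H),y_{u+v}))=\bigl(u(t+|i(H)|)+(v-1)\bigr)+(t+|i(H)|)=(u+1)(t+|i(H)|)+v-1.
\end{equation*}
Since $t+|i(H)|\geq 1$ whenever $H$ is nonempty (and the null-graph case forces a separate but easier argument, handled via the stripped isomorphisms as in the earlier theorems), one has $u(t+|i(H)|)+v\leq (u+1)(t+|i(H)|)+v-1$, so Lemma~\ref{exacther} yields $\depth(S/I(K_{u,v}\odot H))=u(t+|i(H)|)+v=\min\{u,v\}(t+|i(H)|)+\max\{u,v\}$, completing the depth part. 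For the Stanley depth lower bound I would run the identical computation replacing Lemma~\ref{exacther} by Lemma~\ref{exacthersdepth} and using the inequality form of Lemma~\ref{LEMMA1.5} together with Lemma~\ref{rle2}, and for the null-graph equality I would also produce a matching upper bound by localizing at the monomial $u_0:=y_{u+1}y_{u+2}\cdots y_{u+v}$ (the product of the $v$ vertices in the larger part), whose colon ideal strips $K_{u,v}\odot H$ down to a polynomial ring tensored with $v-1$ copies of $N_1\odot H$-type pieces, then applying Lemma~\ref{Cor7} exactly as in the proof of Theorem~\ref{The1199}.

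The main obstacle will be the bookkeeping around \emph{which of $u,v$ is the maximum} as $v$ decreases in the induction: when we pass from $K_{u,v}$ to $K_{u,v-1}$ the roles of the two partite sets can swap (e.g. starting from $u<v$ we may reach $v-1<u$), so the inductive hypothesis must be stated symmetrically in $u$ and $v$ from the outset, and the arithmetic identity $\min\{u,v-1\}(t+|i(H)|)+\max\{u,v-1\}+(t+|i(H)|)=\min\{u,v\}(t+|i(H)|)+\max\{u,v\}-1$ must be verified in both regimes $u\leq v-1$ and $u=v$. A secondary but routine nuisance is the small-parameter degeneracies ($K_{1,1}\cong P_2$, $K_{u,1}\cong S_u$, and the convention $K[i(H)]\cong K$ when $i(H)=\emptyset$), which I would dispatch at the start so the inductive step runs cleanly.
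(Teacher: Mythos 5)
Your proposal is correct and follows essentially the same route as the paper: induction on $v$ using the isomorphisms \eqref{e1} and \eqref{e2} together with Lemmas \ref{le3}, \ref{LEMMA1.5} and \ref{triv}, base cases $K_{1,1}\cong P_2$ and $K_{u,1}\cong S_u$ via Theorems \ref{The1} and \ref{The1199}, and the colon at $w=y_{u+1}\cdots y_{u+v}$ with Lemma \ref{Cor7} (the paper uses this colon for the depth upper bound in general, with Lemma \ref{rle2} for the lower bound, where you instead invoke Lemma \ref{exacther}; both work). Two small corrections to your side remarks: Theorem \ref{The1199} with $n=u$ gives $u+\depth(K[V(H)]/I(H))+|i(H)|$, which equals $\min\{u,1\}\big(\depth(K[V(H)]/I(H))+|i(H)|\big)+\max\{u,1\}$ for every $u\geq 1$, so the $v=1$ anchor is unproblematic; and in the regime $u=v$ the identity you propose to verify is off by one (the left side is $u(t+|i(H)|)+u$, the right side $u(t+|i(H)|)+u-1$) --- but only the inequality $\depth(S/(I(K_{u,v}\odot H),y_{u+v}))\geq\depth(S/(I(K_{u,v}\odot H):y_{u+v}))$ is needed to apply Lemma \ref{exacther}, and that holds in both regimes. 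Finally, depth needs no separate null-graph case (there $t+|i(H)|=|V(H)|\geq 1$ anyway); the stripped isomorphisms are only required for the Stanley depth equality, exactly as you describe.
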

\begin{proof}
	
	Let  $t:=\depth(K[V(H)]/I(H)),$ without loss of generality, we assume that $v\geq u.$  For $u=v=1,$ the result follows from Theorem \ref{The1}.	If $u=1$ and $v\geq 1,$ then the result follows from Theorem \ref{The1199}.
	Let $u,v\geq 2.$
		By using induction on $v$, Lemma  \ref{le3}, Lemma \ref{LEMMA1.5} and Lemma \ref{triv} on Eq. \ref{e1} and Eq. \ref{e2}, we have
  \begin{equation*}
      \begin{split}
           \depth(S/(I(K_{u,v} \odot H):y_{u+v}))&=(v-1)\depth(K[V(N_1 \odot H)]/I(N_1 \odot H)) +u\cdot\depth( K[i(H)])\\&\quad\quad+u\cdot \depth( K[V(H)]/I(H))  +\depth( K[y_{u+v}])\\&= v-1 +u\big(|i(H)|+t\big)+1\\&= u(|i(H)|+t)+v, 
      \end{split}
  \end{equation*}
		\begin{equation*}
		    \begin{split}
		         \depth(S/(I(K_{u,v} \odot H),y_{u+v}))&=\depth(K[V(K_{u,v-1}\odot H)]/I(K_{u,v-1}\odot H))+\depth( K[i(H)]) \\&\quad\quad+\depth( K[V(H)]/I(H))\\& =u(t+|i(H)|)+v-1+t+|i(H)|\\&=(u+1)\big(|i(H)|+t\big)+v-1.
		    \end{split}
		\end{equation*} Since $(u+1)\big(|i(H)|+t\big)+v-1 \geq u(|i(H)|+t)+v.$ By Lemma \ref{rle2}, we get $ \depth(S/I(K_{u,v} \odot H)) \geq u(|i(H)|+t)+v.$	 For the other inequality, let $w:=y_{u+1}y_{u+2}\cdots y_{u+v}\notin I(K_{u,v} \odot H),$  %let $u:=y_{u+1}y_{u+2}\cdots y_{u+v}\notin I(K_{u,v} \odot H),$
	we have the following $K$-algebra isomorphism:
	\begin{equation}\label{d3}
		S/(I(K_{u,v} \odot H):w) \cong\\ K[y_{u+1},y_{u+2}, \dots, y_{u+v}]\underset{l=1}{\overset{u}{\tensor_K}} K[i(H)]\underset{l=1}{\overset{u}{\tensor_K}} K[V(H)]/I(H).
	\end{equation}
	%	where $S'''=K[V(K_{u,v} \odot H)\backslash \{\underset{i=u+1}{\overset{u+v}{\cup}}\overline{N}_{K_{u,v} \odot H}(y_i)\cup \underset{i=u+1}{\overset{u+v}{\cup}}Q_{K_{u,v} \odot H}(y_{i})\}].$
	By applying Lemma  \ref{le3}, Lemma  \ref{LEMMA1.5} and Lemma  \ref{Cor7} on Eq. \ref{d3}, we have
	\begin{equation*}
 \begin{split}
	    	\depth (S/I(K_{u,v} \odot H))	&\leq  \depth (S/(I(K_{u,v} \odot H):w))\\&=\depth(K[y_{u+1},y_{u+2}, \dots, y_{u+v}])+u\cdot \depth( K[i(H)])\\&\quad\quad+u\cdot \depth(K[V(H)]/I(H))\\&=u(|i(H)|+t)+v.
 \end{split}
	\end{equation*}
	Hence $	\depth (S/I(K_{u,v} \odot H))=u(|i(H)|+t)+v.$ 
Now, we prove the result for Stanley depth.  If $H$ is not a null graph, then we get the required inequality for Stanley depth in a similar way to depth by using induction on $v$, Lemma \ref{le3}, Lemma \ref{LEMMA1.5}, Lemma \ref{triv} and Lemma \ref{rle2} on Eq. \ref{e1} and Eq. \ref{e2}.	But if $H$ is a null graph, then we have the following isomorphisms:
	\begin{equation*}
		S/(I(K_{u,v} \odot H),y_{u+v})\cong K[V(K_{u,v-1}\odot H)]/I(K_{u,v-1}\odot H) \tensor_K K[V(H)],
	\end{equation*}
\begin{equation*}
    	S/(I(K_{u,v} \odot H):y_{u+v})\cong  \underset{l=u+1}{\overset{u+v-1}{\tensor_K}}K[V(N_1 \odot H)]/I(N_1 \odot H) \underset{l=1}{\overset{u}{\tensor_K}} K[V(H)] \tensor_K K[y_{n+1}],
\end{equation*}

\begin{equation*}
		S/(I(K_{u,v} \odot H):w) \cong K[y_{u+1},y_{u+2}, \dots, y_{u+v}]\underset{l=1}{\overset{u}{\tensor_K}} K[V(H)],
	\end{equation*}
and the proof for Stanley depth is similar to the depth. 
\end{proof}%\noindent  Stanley's inequality holds  for $S/I(K_{u,v} \odot H)$ if it holds for $K[V(H)]/I(H).$

\begin{Corollary}
	If	Stanley's inequality holds for $K[V(H)]/I(H),$ then it also holds  for $S/I(K_{u,v} \odot H).$
\end{Corollary}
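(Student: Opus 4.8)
The plan is to derive this immediately from Theorem~\ref{The1177}, in exactly the same way the analogous corollaries following Theorems~\ref{The1}, \ref{The3}, \ref{The119} and \ref{The1199} are obtained. Recall that Stanley's inequality for a module $A$ is the assertion $\sdepth(A)\geq\depth(A)$; so the hypothesis reads $\sdepth(K[V(H)]/I(H))\geq\depth(K[V(H)]/I(H))$, and the goal is to show $\sdepth(S/I(K_{u,v}\odot H))\geq\depth(S/I(K_{u,v}\odot H))$.

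First I would set $t:=\depth(K[V(H)]/I(H))$ and $t':=\sdepth(K[V(H)]/I(H))$, so that $t'\geq t$ by hypothesis. Since $u,v\in\mathbb{Z}^+$ we have $\min\{u,v\}\geq 1>0$, so multiplying the inequality $t'+|i(H)|\geq t+|i(H)|$ by $\min\{u,v\}$ and adding $\max\{u,v\}$ preserves it:
\[
\min\{u,v\}\big(t'+|i(H)|\big)+\max\{u,v\}\ \geq\ \min\{u,v\}\big(t+|i(H)|\big)+\max\{u,v\}.
\]
By Theorem~\ref{The1177}(b) the left-hand side is a lower bound for $\sdepth(S/I(K_{u,v}\odot H))$, and by Theorem~\ref{The1177}(a) the right-hand side equals $\depth(S/I(K_{u,v}\odot H))$. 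Chaining these two facts yields $\sdepth(S/I(K_{u,v}\odot H))\geq\depth(S/I(K_{u,v}\odot H))$, which is the desired conclusion.

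There is essentially no obstacle here: the statement is a purely formal consequence of the two-sided bounds already established in Theorem~\ref{The1177}. The only point that needs a moment's care is that the coefficient $\min\{u,v\}$ appearing in those bounds must be nonnegative for the inequality on $K[V(H)]/I(H)$ to propagate to the corona product, and this is guaranteed by the hypothesis $u,v\geq 1$.
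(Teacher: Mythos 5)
Your argument is correct and is exactly the (implicit) reasoning behind the paper's corollary: combine the exact depth formula of Theorem \ref{The1177}(a) with the Stanley depth lower bound of Theorem \ref{The1177}(b), using that $\min\{u,v\}\geq 1$ so the inequality $\sdepth(K[V(H)]/I(H))\geq\depth(K[V(H)]/I(H))$ propagates. Nothing further is needed.
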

\begin{Corollary} \label{corsub}
	If $S=K[V(K_{u,v} \odot H)]$, then
	\begin{equation*}
		\pdim(S/I(K_{u,v} \odot H))=
		(u+v)(|V(H)|+1) -\min\{u,v\}\big(\depth(K[V(H)]/I(H))+|i(H)|\big)-\max\{u,v\}.
	\end{equation*}
\end{Corollary}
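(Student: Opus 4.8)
The plan is to obtain the formula as an immediate consequence of Theorem \ref{The1177}(a) together with the Auslander--Buchsbaum formula, mirroring the proofs of the corresponding corollaries that follow Theorems \ref{The1}, \ref{The3}, \ref{The119}, and \ref{The1199}.

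First, I would count the variables of $S$. By the identity $|V(X\odot H)| = |V(X)|(|V(H)|+1)$ recorded after Definition \ref{defcorona}, applied with $X = K_{u,v}$ (so that $|V(K_{u,v})| = u+v$), we have $|V(K_{u,v}\odot H)| = (u+v)(|V(H)|+1)$. Hence $S$ is a polynomial ring in $(u+v)(|V(H)|+1)$ indeterminates, and therefore $\depth(S) = (u+v)(|V(H)|+1)$.

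Next, since $S$ is a regular ring, the nonzero finitely generated module $S/I(K_{u,v}\odot H)$ has finite projective dimension, so Lemma \ref{auss13} applies and yields
$$\pdim(S/I(K_{u,v}\odot H)) = \depth(S) - \depth(S/I(K_{u,v}\odot H)).$$
Substituting $\depth(S) = (u+v)(|V(H)|+1)$ together with the value of $\depth(S/I(K_{u,v}\odot H))$ supplied by Theorem \ref{The1177}(a), namely $\min\{u,v\}\big(\depth(K[V(H)]/I(H))+|i(H)|\big)+\max\{u,v\}$, and rearranging, gives exactly the asserted expression.

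There is essentially no obstacle here: the argument is pure bookkeeping once Theorem \ref{The1177} is in hand. The only point worth a moment's care is that the hypothesis $u,v\in\mathbb{Z}^+$ guarantees that $K_{u,v}\odot H$ is an honest graph with a proper edge ideal, so that $S/I(K_{u,v}\odot H)$ is indeed a nonzero module of finite projective dimension and Lemma \ref{auss13} is legitimately applicable.
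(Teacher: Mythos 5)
Your argument is correct and coincides with the paper's own proof: both count $|V(K_{u,v}\odot H)|=(u+v)(|V(H)|+1)$ and combine the Auslander--Buchsbaum formula (Lemma \ref{auss13}) with the depth value from Theorem \ref{The1177}(a). No further comment is needed.
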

\begin{proof}
	As $|V(K_{u,v} \odot H)|=(u+v)(|V(H)|+1)$.	By using Lemma \ref{auss13} and Theorem \ref{The1177}, we get the required result.
\end{proof}
\begin{Lemma}[{\cite[Lemma 1.1]{POP}}]
\em{%For a complete bipartite graph $K_{u,v},$ 
If $S=K[V(K_{u,v})],$  then  $\depth(S/I({K_{u,v}}))=1.$}
\end{Lemma}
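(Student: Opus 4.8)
Proof proposal for the final statement ($\depth(S/I(K_{u,v}))=1$, where $S=K[V(K_{u,v})]$).

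The plan is to exhibit $K_{u,v}$ as the special case $H=N_0$ (the empty graph on no vertices, so $I(H)=(0)$ and $K[V(H)]/I(H)\cong K$) inside the corona framework — but more directly, $K_{u,v}$ is itself a complete bipartite graph, so this is the degenerate $H=N_1$-type situation handled in Theorem \ref{The1177}. Concretely, I would first recall that $K_{u,v}$ is a connected graph with independence number $\max\{u,v\}$, so by Lemma \ref{prok} we have $\dim(S/I(K_{u,v}))=\max\{u,v\}$; this is not yet the depth but frames the computation. The cleanest route is to apply the same colon/sum decomposition used throughout Section \ref{sec2}: pick a vertex $y_{u+v}$ in the larger partite class and use the short exact sequence $0\to S/(I:y_{u+v})\xrightarrow{\cdot y_{u+v}} S/I\to S/(I,y_{u+v})\to 0$ together with Lemma \ref{exacther}.

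First I would compute $(I(K_{u,v}):y_{u+v})$: since $y_{u+v}$ lies in the partite set $K_v$, its neighbors are exactly $x_1,\dots,x_u$, so $(I(K_{u,v}):y_{u+v})=(x_1,\dots,x_u)+I(K_{u,v})$. Because every edge of $K_{u,v}$ touches $\{x_1,\dots,x_u\}$, after quotienting by these $u$ variables no edges remain, giving
\[
S/(I(K_{u,v}):y_{u+v})\cong K[y_{u+1},y_{u+2},\dots,y_{u+v}],
\]
a polynomial ring in $v$ variables, hence of depth $v$. Next, $(I(K_{u,v}),y_{u+v})$ corresponds to deleting the vertex $y_{u+v}$, i.e.\ to $K_{u,v-1}$ together with the isolated leftover; so
\[
S/(I(K_{u,v}),y_{u+v})\cong K[V(K_{u,v-1})]/I(K_{u,v-1}),
\]
(up to an extra free variable accounting that contributes $+1$ to the depth). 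By induction on $v$, starting from the base $K_{u,1}=S_u$ where Lemma \ref{leAli} gives depth $1$, one gets $\depth(S/(I(K_{u,v}),y_{u+v}))\geq 1$, and in fact $\geq \max\{u,v-1\}\geq 1$. Since $\depth(S/(I,y_{u+v}))\geq 1$ is not smaller than what we need, Lemma \ref{exacther} does \emph{not} immediately pin the depth to $1$ — so instead I would use the opposite bound: by Lemma \ref{Cor7}(a), $\depth(S/I(K_{u,v}))\le\depth(S/(I:u'))$ for a suitable monomial $u'$, and choosing $u'=y_{u+1}y_{u+2}\cdots y_{u+v}$ (which is not in $I$) yields $(I:u')=(x_1,\dots,x_u)$, so $S/(I:u')\cong K[y_1,\dots,y_u,\dots]$ — wait, this gives depth $u$, too large. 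The right choice is $u'=x_1x_2\cdots x_u$: then $(I(K_{u,v}):u')=(y_{u+1},\dots,y_{u+v})$ and $S/(I:u')\cong K[x_1,\dots,x_u]$ has depth $u$; combined with the symmetric choice $u''=y_{u+1}\cdots y_{u+v}$ giving depth bound... hmm.

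The key observation that makes it clean: $I(K_{u,v})=(x_1,\dots,x_u)\cap(y_{u+1},\dots,y_{u+v})$ is the intersection of two prime ideals generated by disjoint variable sets, so $S/I(K_{u,v})$ has exactly two minimal primes and one checks directly (or via the Mayer–Vietoris / short exact sequence $0\to S/I\to S/(x_1,\dots,x_u)\oplus S/(y_{u+1},\dots,y_{u+v})\to S/(x_1,\dots,x_u,y_{u+1},\dots,y_{u+v})\to 0$) that $\depth(S/I(K_{u,v}))=1$: the last two terms have depths $u$, $v$ (both $\ge1$) and $0$ respectively, and the connecting map forces $\depth(S/I)=0+1=1$ by the depth lemma applied carefully to this non-split sequence. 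The main obstacle is exactly this last step — getting the \emph{upper} bound $\depth\le1$ rather than just $\ge1$; the cleanest fix, consistent with the paper's toolkit, is to invoke Lemma \ref{auss13} together with the known $\pdim(S/I(K_{u,v}))=u+v-1$ (a standard fact, or derive it from the Taylor/Eliahou–Kervaire resolution), whence $\depth=(u+v)-(u+v-1)=1$. I would present the proof via this short exact sequence of the two minimal primes, which is the most self-contained.
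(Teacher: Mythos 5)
The paper does not prove this lemma at all: it is quoted as \cite[Lemma 1.1]{POP}, so there is no internal proof to compare against. Your final argument is essentially correct and is a reasonable self-contained substitute. Writing $J_1=(x_1,\dots,x_u)$ and $J_2=(y_{u+1},\dots,y_{u+v})$, one indeed has $I(K_{u,v})=J_1\cap J_2$ (equal to $J_1J_2$ since the variable sets are disjoint), and the exact sequence $0\to S/I(K_{u,v})\to S/J_1\oplus S/J_2\to S/(J_1+J_2)\to 0$ has middle term of depth $\min\{u,v\}\geq 1$ and right term $S/\mathfrak m\cong K$ of depth $0$. Be aware, though, that the inequality quoted in the paper as Lemma \ref{rle2}(a) (depth of the middle term) only yields the lower bound $\depth(S/I(K_{u,v}))\geq 1$; for the upper bound you must invoke the full depth lemma of \cite[Proposition 1.2.9]{depth}, namely $\depth(C)\geq\min\{\depth(A)-1,\depth(B)\}$ for $0\to A\to B\to C\to 0$, which with $\depth(C)=0$ and $\depth(B)\geq 1$ forces $\depth(A)\leq 1$. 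State that inequality explicitly rather than saying the connecting map ``forces'' the value.

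Two caveats on the exploratory middle portion, which you should delete or repair if you keep any of it: (i) $S/(I(K_{u,v}),y_{u+v})\cong K[V(K_{u,v-1})]/I(K_{u,v-1})$ with \emph{no} extra free variable (the variable $y_{u+v}$ is killed), and by induction its depth is exactly $1$, not ``$\geq\max\{u,v-1\}$''; (ii) the colon trick of Lemma \ref{Cor7}(a) can only ever give the upper bound $\min\{u,v\}$ here, since for any monomial $f\notin I(K_{u,v})$ the quotient $S/(I(K_{u,v}):f)$ is a polynomial ring in the variables of the part supporting $f$; and the alternative route via Lemma \ref{auss13} is not available as written, because $\pdim(S/I(K_{u,v}))=u+v-1$ is exactly what needs proof (Eliahou--Kervaire does not apply, as $I(K_{u,v})$ is not a stable ideal). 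The Mayer--Vietoris argument you end with is the one to keep.
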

\begin{Corollary}
	Let $u,v,m,n,s\in \mathbb{Z}^+$  
and $S=K[V(K_{u,v} \odot K_{m,n})]$. Then
	\begin{itemize}
		\item [(a)] $	\depth(S/I(K_{u,v} \odot K_{m,n}))=u+v.$
	%	\item[(b)]   $\sdepth(S/I(K_{u,v} \odot K_{m,n}))\geq u+v.$
		\item[(b)] $	\pdim(S/I(K_{u,v} \odot K_{m,n}))=(u+v)(m+n).$
			\item[(c)]$\depth(K[V(Br_s(K_{u,v}))]/I(Br_s(K_{u,v})))=\sdepth(K[V(Br_s(K_{u,v}))]/I(Br_s(K_{u,v})))=\\\min\{u,v\}s+\max\{u,v\}.$ 
	\end{itemize}
\end{Corollary}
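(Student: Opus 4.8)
The plan is simply to feed $H = K_{m,n}$ (for parts (a) and (b)) and $H = N_s$ (for part (c)) into Theorem \ref{The1177} and the projective-dimension corollary that follows it, so no genuinely new argument is needed. The two facts about $K_{m,n}$ that these results consume are: first, $i(K_{m,n}) = \emptyset$, hence $|i(K_{m,n})| = 0$ (since $m, n \geq 1$, every vertex of $K_{m,n}$ is incident to an edge); and second, $\depth(K[V(K_{m,n})]/I(K_{m,n})) = 1$ by \cite[Lemma 1.1]{POP}. Recording these reduces everything to arithmetic.

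For part (a), applying Theorem \ref{The1177}(a) with $H = K_{m,n}$ gives $\depth(S/I(K_{u,v}\odot K_{m,n})) = \min\{u,v\}(1+0) + \max\{u,v\} = \min\{u,v\} + \max\{u,v\} = u+v$. For part (b), one may either substitute $H = K_{m,n}$ into the projective-dimension corollary following Theorem \ref{The1177}, using $|V(K_{m,n})| = m+n$, to obtain $(u+v)(m+n+1) - \min\{u,v\}\cdot 1 - \max\{u,v\} = (u+v)(m+n+1) - (u+v) = (u+v)(m+n)$; or apply the Auslander--Buchsbaum formula (Lemma \ref{auss13}) directly to part (a), since $|V(K_{u,v}\odot K_{m,n})| = (u+v)(|V(K_{m,n})|+1) = (u+v)(m+n+1)$.

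For part (c), note that $Br_s(K_{u,v})$ is precisely $K_{u,v}\odot N_s$, where $N_s$ is the null graph on $s$ vertices, for which $\depth(K[V(N_s)]/I(N_s)) = 0$ and $|i(N_s)| = s$. Theorem \ref{The1177}(a) then yields $\depth = \min\{u,v\}(0+s) + \max\{u,v\} = \min\{u,v\}s + \max\{u,v\}$, while the ``in particular'' clause of Theorem \ref{The1177}(b) for null $H$ gives the matching value $\min\{u,v\}|V(N_s)| + \max\{u,v\} = \min\{u,v\}s + \max\{u,v\}$ for the Stanley depth, so depth and Stanley depth agree. Since every step is a direct substitution into already-established formulas, there is no real obstacle here; the only point meriting a moment's attention is the verification that $K_{m,n}$ has no isolated vertices, which is immediate from $m, n \in \mathbb{Z}^+$.
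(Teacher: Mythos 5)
Your proposal is correct and follows exactly the route the paper intends: substitute $H=K_{m,n}$ (with $|i(K_{m,n})|=0$ and $\depth(K[V(K_{m,n})]/I(K_{m,n}))=1$ from the cited lemma of Popescu, which the paper states immediately before this corollary for precisely this purpose) into Theorem \ref{The1177} and its projective-dimension corollary, and substitute the null graph $H=N_s$ for part (c). The arithmetic in all three parts checks out, so nothing further is needed.
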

%	\begin{Theorem}\label{The118}
%		Let $H$ be any graph on $q$ vertices in which $a$ vertices are isolated and $b$ are non-isolated with $\sdepth(S_{b}/I(H^{\star}_{c}))=h_1$, and $\sdepth(S_{q}/I(H))=h_1+a=h\geq 1$. For $u,v\geq 1$ and $v\geq u$ then
%		\begin{eqnarray*}
%			\sdepth(S_{(u+v)(q+1)}/I(\mathcal{K}_{u,v}))=uh+v.
%		\end{eqnarray*}
%	\end{Theorem}

		\subsection{Regularity}\label{sec2.2}
\paragraph{} \bigskip  We introduce a lemma at the beginning of this subsection,  which will be used very often. It is obvious that $\reg(S)=0.$
		
	\begin{Lemma}\label{regtriv}\bigskip
If $ S=K[V(N_1 \odot H)],$  then
 $$
\reg(S/I(N_1 \odot H))=
\begin{cases}
	1, & \text{if}\, $H$\, \text{is a null graph;}\\  
 \reg(K[V(H)]/I(H)), &  \text{otherwise.}
\end{cases}
$$
	\end{Lemma}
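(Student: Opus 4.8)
The plan is to apply the regularity recursion of Lemma \ref{regul2} to the variable $y_1 \in V(N_1\odot H)$, feeding it the two $K$-algebra isomorphisms Eq. \ref{vertexcolon} and Eq. \ref{vertexcoma} to evaluate the colon and quotient terms. From Eq. \ref{vertexcolon} we get $\reg(S/(I(N_1\odot H):y_1)) = \reg(K[y_1]) = 0$, and from Eq. \ref{vertexcoma}, since adjoining polynomial variables does not change regularity (Lemma \ref{le3}(b), applied $|i(H)|$ times), we get $\reg(S/(I(N_1\odot H),y_1)) = \reg(K[V(H)]/I(H))$.

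If $H$ is not a null graph, then $E(H)\neq\emptyset$, and using the identification $K[V(H)]/I(H)\cong K[V(H')]/I(H')$ together with the fact that a nonzero edge ideal has a degree-$2$ minimal generator, we obtain $\reg(K[V(H)]/I(H))\geq 1 > 0$. Hence the colon term has strictly smaller regularity than the quotient term, so part (c) of Lemma \ref{regul2} applies and yields $\reg(S/I(N_1\odot H)) = \reg(S/(I(N_1\odot H),y_1)) = \reg(K[V(H)]/I(H))$. If $H$ is a null graph, the cleanest route is to note that $N_1\odot H \cong S_{|V(H)|}$, which is a tree and hence chordal with induced matching number $1$, so Lemma \ref{reg1} gives $\reg(S/I(N_1\odot H)) = 1$ directly. (Alternatively one stays with the recursion: here both terms have regularity $0$, so part (b) of Lemma \ref{regul2} only gives $\reg(S/I(N_1\odot H)) \in \{0,1\}$, and one removes the value $0$ by observing that $I(N_1\odot H)$ is a nonzero ideal generated in degree $2$.)

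The one genuine subtlety is making sure we land in a branch of Lemma \ref{regul2} that determines the value: in the non-null case this rests on the lower bound $\reg(K[V(H)]/I(H))\geq 1$, which forces branch (c), whereas in the null case that bound fails and branch (b) is ambiguous, so a separate argument (either the chordal computation above, or the remark that a nontrivial edge ideal has regularity at least $1$) is needed to pick out the value $1$. The remaining steps are routine bookkeeping of invariants already recorded in Section \ref{section1}.
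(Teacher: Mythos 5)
Your proposal is correct and matches the paper's own argument: the null case is handled via $N_1\odot H\cong S_{|V(H)|}$ and Lemma \ref{reg1}, and the non-null case via Eqs. \ref{vertexcolon}--\ref{vertexcoma}, Lemma \ref{le3} and Lemma \ref{regul2}(c), exactly as in the paper. Your explicit justification that $\reg(K[V(H)]/I(H))\geq 1$ (from a degree-$2$ minimal generator) is a small but welcome addition; the paper simply asserts this inequality.
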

	\begin{proof}
		If $H$ is a null graph, %and $i(H)\geq 1$, 
		then $|V(H)|=|i(H)|$ and $N_1 \odot H\cong S_{|V(H)|}$. By Lemma \ref{reg1}, we have $\reg(S/I(N_1 \odot H))=1.$ If $H$  is not a null graph, %As$\big(K[V(\mathbb{K}_{1}/y_{1})]/I(\mathbb{K}_{1}/y_{1}))\big[\overline{Q}_{\mathbb{K}_{1}}(y_1)]\cong K[y_{1}].$ Also we have $K[V(\mathbb{K}_{1}\backslash y_1)]/I(\mathbb{K}_{1}\backslash y_1)[P_{\mathbb{K}_{1}}(y_1)]\cong  K[V(H')]/I(H')\tensor K[i(H)].$
		then by Eq. \ref{vertexcolon}, we get $\reg(S/(I(N_1 \odot H):y_{1}))=\reg(K[y_1])=0.$  By using	Lemma \ref{le3}  in  Eq.  \ref{vertexcoma}, we have $\reg(S/(I(N_1 \odot H),y_{1}))=\reg(K[V(H)]/I(H))\geq 1$. By  Lemma \ref{regul2}(c), we get $ 	\reg(S/I(N_1 \odot H))=\reg(K[V(H)]/I(H)).$
	\end{proof}
		%	\section{Krull dimension of residue class ring of edge ideals of corona product of two graphs}
		
		\begin{Remark}\label{zeroandtriv}
\em{While proving the result by induction on $n$, we have a module of the type $K[V(P_0\odot H)]/I(P_0\odot H),$	so in that case  we define $\reg(K[V(P_0\odot H)]/I(P_0\odot H))=0.$ }
	\end{Remark}
	\begin{Theorem}\label{THE2}
	Let $n\geq 1.$ If $S=K[V(P_n \odot H)],$ then
	$$
	\reg(S/I(P_n \odot H))=
	\begin{cases}
		\big\lceil \frac{n}{2}\big\rceil, & \text{if}\, $H$\, \text{is a null graph;}\\  
		n\cdot \reg(K[V(H)]/I(H)), & \text{otherwise.}
	\end{cases}
	$$
	%			If $r=0$,  then
	%			
	%			\begin{equation*}
	%				\reg(S/I(P_n \odot H)=\bigg\lceil \frac{n}{2}\bigg\rceil.
	%=\left\{\begin{matrix}
	%			\frac{n}{2}, & if \quad \quad \text{$n$ is even} ;\\ \\ \frac{n+1}{2} & if \quad \quad\text{ $n$ is odd},
	%		\end{matrix}\right.\end{equation*}
	%			and for $r\geq 1$, we have
	%			\begin{equation*}
	%				\reg(S_{n(q+1)}/I(P_{n}\odot H))=nr.\end{equation*}
\end{Theorem}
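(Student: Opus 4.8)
The plan is to mimic the depth computation from Theorem~\ref{The1}, using induction on $n$ together with the two short exact sequences coming from the variable $y_n$ (a leaf of $P_n$), but now tracking regularity via Lemma~\ref{regul2} instead of depth via Lemma~\ref{exacther}. First I would dispose of the base case: for $n=1$ we have $P_1\odot H\cong N_1\odot H$, and Lemma~\ref{regtriv} gives exactly $\lceil 1/2\rceil = 1$ if $H$ is null and $\reg(K[V(H)]/I(H))$ otherwise, matching the claimed formula. I would also set $\reg(K[V(P_0\odot H)]/I(P_0\odot H))=0$ as in Remark~\ref{zeroandtriv} to make the inductive bookkeeping uniform.

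For the inductive step with $n\geq 2$, I would apply Lemma~\ref{regul2} to the variable $x_i := y_n$ and compute the regularities of $S/(I(P_n\odot H):y_n)$ and $S/(I(P_n\odot H),y_n)$ from the $K$-algebra isomorphisms in Eq.~\ref{path1} and Eq.~\ref{path2}. Using Lemma~\ref{le3}(b) (polynomial variables do not change regularity) and Lemma~\ref{circulentt} (regularity of a tensor product over $K$ is additive), I get, writing $t:=\reg(K[V(H)]/I(H))$ and using Remark on isolated vertices so that $\reg(K[i(H)])=0$,
\begin{equation*}
\reg\bigl(S/(I(P_n\odot H):y_n)\bigr)=\reg\bigl(K[V(P_{n-2}\odot H)]/I(P_{n-2}\odot H)\bigr)+t
\end{equation*}
and
\begin{equation*}
\reg\bigl(S/(I(P_n\odot H),y_n)\bigr)=\reg\bigl(K[V(P_{n-1}\odot H)]/I(P_{n-1}\odot H)\bigr)+t.
\end{equation*}
By the induction hypothesis the first equals $(n-2)t+t=(n-1)t$ and the second equals $(n-1)t+t=nt$ in the non-null case, while in the null case ($t$ replaced appropriately, $|V(H)|$ isolated vertices) they become $\lceil (n-2)/2\rceil$ and $\lceil (n-1)/2\rceil$ plus the contribution of the $H$-copies.

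The main obstacle, and the place that needs care, is deciding which branch of Lemma~\ref{regul2} applies: whether $\reg(S/(I:y_n))$ is less than, equal to, or greater than $\reg(S/(I,y_n))$, since in the equality case Lemma~\ref{regul2}(b) only pins down $\reg(S/I)$ up to an ambiguity of $1$. In the non-null case I expect $\reg(S/(I:y_n))=(n-1)t< nt=\reg(S/(I,y_n))$ whenever $t\geq 1$, so Lemma~\ref{regul2}(c) gives $\reg(S/I(P_n\odot H))=\reg(S/(I,y_n))=nt$ cleanly; one should note $t\geq 1$ automatically since $H$ non-null forces an edge and hence a nonzero edge ideal with $\indmat\geq 1$-type contribution, so the strict inequality is genuine. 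In the null case the comparison $\lceil(n-2)/2\rceil$ versus $\lceil(n-1)/2\rceil$ is again strict exactly when $n$ is even, and equal when $n$ is odd; for the odd case I would resolve the $\pm 1$ ambiguity by exhibiting an explicit induced matching of size $\lceil n/2\rceil$ in $Br_{|V(H)|}(P_n)$ (take every other whisker-type edge) together with the upper bound $\reg(S/(I,y_n))=\lceil(n-1)/2\rceil+(\text{copies})$, or alternatively invoke that bristled graphs of chordal graphs are chordal and apply Lemma~\ref{reg1} directly to compute $\indmat(Br_{|V(H)|}(P_n))=\lceil n/2\rceil$. Assembling these cases yields the stated piecewise formula.
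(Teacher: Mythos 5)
Your proposal is correct, and in the non-null case it coincides with the paper's argument: colon and sum by the leaf $y_n$ via Eq.~\ref{path1} and Eq.~\ref{path2}, regularities $(n-1)r$ and $nr$, and since $r\geq 1$ Lemma \ref{regul2}(c) closes the induction. Where you diverge is the null case. The paper never meets the equality ambiguity you worry about: instead of the leaf it splits on $y_{n-1}$, the neighbour of the leaf, so that the colon ideal wipes out two copies of $H$ and the two last path vertices, giving $\reg(S/(I:y_{n-1}))=\lceil\frac{n-3}{2}\rceil$ against $\reg(S/(I,y_{n-1}))=\lceil\frac{n-2}{2}\rceil+\reg(K[V(N_1\odot H)]/I(N_1\odot H))=\lceil\frac{n}{2}\rceil$, a strict inequality for every $n\geq 3$, so Lemma \ref{regul2}(c) applies uniformly (with $n=2$ handled separately by splitting on $y_1$). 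Your route with the leaf does hit the equality case for odd $n$, and your proposed fixes are legitimate: the cleanest is the one you mention last, since $Br_{|V(H)|}(P_n)$ is a forest, hence chordal, so Lemma \ref{reg1} gives $\reg=\indmat=\lceil\frac{n}{2}\rceil$ outright (note this actually dispenses with the induction in the null case altogether); the other fix, pairing the upper bound $\reg\leq\reg(S/(I,y_n))+1$ from Lemma \ref{regul2}(b) with an explicit induced matching of every-other-pendant edges, additionally needs the general lower bound $\reg(S/I(G))\geq\indmat(G)$, which is standard (Katzman) but not among the paper's quoted lemmas, so chordality plus Lemma \ref{reg1} is the preferable way to finish. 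Either way you still owe the short verification that $\indmat(Br_{|V(H)|}(P_n))=\lceil\frac{n}{2}\rceil$ (edges of an induced matching must meet path vertices whose indices are pairwise at distance at least $2$), which is routine. In summary: same skeleton as the paper, but the paper's choice of splitting vertex buys a uniformly strict inequality, while your choice of the leaf costs an extra combinatorial argument for odd $n$ that, once invoked, in fact settles the whole null case directly.
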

\begin{proof} We consider two cases. 
	\begin{description}
		\item[Case 1] Let $H$ is a null graph.  
		If $n=1,$ result follows by Lemma \ref{regtriv}. 	If $n=2,$ we have $S/(I(P_{2}\odot H):y_{1})\cong K[V(H)]\tensor_{K}K[y_1]$ and $S/(I(P_{2}\odot H),y_1)\cong K[V(N_{1}\odot H)]/I(N_{1}\odot H)\tensor_{K} K[V(H)].$ We get $\reg(S/(I(P_{2}\odot H):y_{1}))= 0$ and by Lemma \ref{le3}, Lemma \ref{regtriv},  we have $\reg(S/(I(P_{2}\odot H),y_{1}))=\reg( K[V(N_{1}\odot H)]/I(N_{1}\odot H))+0=1.$ Thus by using  Lemma \ref{regul2}(c), we get $	\reg(S/I(P_{2}\odot H)) =1.$ 	Let $n\geq 3.$	 	For $y_{n-1}\in V(P_n \odot H),$ we have the following $K$-algebra isomorphisms:   $$S/(I(P_n \odot H):y_{n-1})\cong K[V(P_{n-3}\odot H)]/I(P_{n-3}\odot H)\underset{l=1}{\overset{2}{\tensor_{K}}}K[V(H)]\tensor_{K}K[y_{n-1}] ,$$
		\begin{equation*}
  \begin{split}
      &S/(I(P_n \odot H),y_{n-1})\cong \\&  K[V(P_{n-2}\odot H)]/I(P_{n-2}\odot H)\tensor_{K} K[V(N_{1}\odot H)]/I(N_{1}\odot H) \tensor_{K} K[V(H)]. 
  \end{split}\end{equation*}
	%	For $y_{n}\in V(P_n \odot H),$ we have  $S''/I((P_n \odot H)/y_{n})\big  [\overline{Q}_{P_n \odot H}(y_n)]\cong K[V(P_{n-2}\odot H)]/I(P_{n-2}\odot H)\tensor_{K} K[V(H),y_{n}]$ 	and	$S'/I((P_n \odot H)\backslash y_n)[P_{P_n \odot H}(y_n)]\cong K[V(P_{n-1}\odot H)]/I(P_{n-1}\odot H)\tensor_{K} K[V(H)].$
	%	If $n=2,$   by Remark \ref{zeroandtriv}, $\reg(S''/I((P_{2}\odot H)/y_{2})\big  [\overline{Q}_{P_{2}\odot H}(y_2)])= 0$ and by Lemma \ref{le3}, Lemma \ref{regtriv},  we get $\reg(S'/I((P_{2}\odot H)\backslash y_2)[P_{P_{2}\odot H}(y_2)])=1.$ Thus by using  Remark \ref{regul}(c), we get $	\reg(S/I(P_{2}\odot H)) =1.$
	%	\begin{multline*}    \reg(S''/I((P_n \odot H)/y_{n})\big  [\overline{Q}_{P_n \odot H}(y_n)])\\=\reg( K[V(P_{n-2}\odot H)]/I(P_{n-2}\odot H))+ \reg(K[V(H),y_{n}])=\big\lceil \frac{n-2}{2}\big\rceil =\big\lceil\frac{n}{2}\big\rceil-1,	\end{multline*}		$$\reg(S'/I((P_n \odot H)\backslash y_n)[P_{P_n \odot H}(y_n)])=\reg(K[V(P_{n-1}\odot H)]/I(P_{n-1}\odot H))+ \reg(K[V(H)])=\big\lceil\frac{n-1}{2}\big\rceil.$$
	%	If $n$ is even, we have $\big\lceil\frac{n-1}{2}\big\rceil=\big\lceil\frac{n}{2}\big\rceil.$	By using  Remark \ref{regul}(c), we have $	\reg(S/I(P_n \odot H))=\big\lceil\frac{n}{2}\big\rceil.$ If $n$ is odd, we have  $\big\lceil\frac{n-1}{2}\big\rceil=\big\lceil\frac{n}{2}\big\rceil-1.$ 
	%	Thus by using  Remark \ref{regul}(b), we get $	\reg(S/I(P_n \odot H)) \in \{ \big\lceil\frac{n}{2}\big\rceil, \big\lceil\frac{n-1}{2}\big\rceil-1\}.$ 
Using induction on $n$ and Lemma \ref{le3}, we have \begin{equation*}
		    \reg(S/(I(P_n \odot H):y_{n-1}))=\reg(K[V(P_{n-3}\odot H)]/I(P_{n-3}\odot H))=\lceil \frac{n-3}{2} \rceil.
		\end{equation*}  By induction on, Lemma \ref{le3}, Lemma \ref{circulentt}
		and Lemma  \ref{regtriv}, we get \begin{equation*}\begin{split}
		   & \reg(S/(I(P_n \odot H),y_{n-1}))\\&=\reg(K[V(P_{n-2}\odot H)]/I(P_{n-2}\odot H))+ \reg(K[V(N_{1}\odot H)]/I(N_{1}\odot H))\\&=\lceil\frac{n}{2}\rceil.
		\end{split}\end{equation*} Since $\lceil \frac{n-3}{2} \rceil< \lceil\frac{n}{2}\rceil,$ thus by using  Lemma \ref{regul2}(c), we get $	\reg(S/I(P_n \odot H))=\lceil\frac{n}{2}\rceil.$
		
		\item[Case 2] Let $H$ is not a null graph and $\reg(K[V(H)]/I(H))=r$. % Let $n=1.$ We have $P_{1}=\{y_{1}\}$ and	$\big(K[V(\mathbb{P}_{1}/y_{1})]/I(\mathbb{P}_{1}/y_{1}))\big[\overline{Q}_{\mathbb{P}_{1}}(y_1)]\cong K[y_{1}].$ Also we have $K[V(\mathbb{P}_{1}\backslash y_1)]/I(\mathbb{P}_{1}\backslash y_1)[P_{\mathbb{P}_{1}}(y_1)]\cong  K[V(H')]/I(H')\tensor K[i(H)].$	 We have $\reg(S/(I(\mathbb{P}_{1}):y_{1}))=0$ and $\reg(S/(I(\mathbb{P}_{1}),y_{1}))=r$. Hence by using Theorem \ref{regul}(c), we have $	\reg(S/I(\mathbb{P}_{1}))=r.$ 
		If $n=1,$  we get the required result  by using Lemma \ref{regtriv}. %	If $n= 2,3,$ applying  Lemmas \ref{le3}, \ref{circulentt}	and \ref{regtriv} on Eqs. \ref{path1} and \ref{path2}, we have  $\reg(S''/I(\mathbb{P}_{2}/y_{2})[\overline{Q}_{\mathbb{P}_{2}}(y_2)])=r,$  $\reg(S'/I(\mathbb{P}_{2}\backslash y_2)[P_{\mathbb{P}_{2}}(y_{2})])=2r,$ $\reg(S''/I(\mathbb{P}_{3}/y_{3})[\overline{Q}_{\mathbb{P}_{3}}(y_3)])=2r$ and  $\reg(S'/I(\mathbb{P}_{3}\backslash y_3)[P_{\mathbb{P}_{3}}(y_{3})])=3r.$ Therefore, by  Remark \ref{regul}(c), $	\reg(S/I(\mathbb{P}_{2}))=2r$ and $	\reg(S/I(\mathbb{P}_{3}))=3r.$ 
		Let  $n\geq 2.$	%As we know that
		%$\big(K[V(P_n \odot H/y_{n})]/I(P_n \odot H/y_{n}))\big[\overline{Q}_{P_n \odot H}(y_n)]\cong K[V(\mathbb{P}_{n-2})]/(I(\mathbb{P}_{n-2})\tensor K[V(H')]/I(H')\tensor K[i(H),y_{n}]$ and $K[V(P_n \odot H\backslash y_n)]/I(P_n \odot H\backslash y_n)[P_{P_n \odot H}(y_n)]\cong K[V(\mathbb{P}_{n-1})]/I(\mathbb{P}_{n-1})\tensor K[V(H')]/I(H')\tensor K[i(H)].$ 
		Considering Eq. \ref{path1} and Eq. \ref{path2} and applying  Lemma \ref{le3}, Lemma \ref{circulentt} and using induction on $n$, we have	\begin{equation*}\begin{split}
		    \reg(S/(I(P_n \odot H):y_{n}))&=
		    \reg(K[V(P_{n-2} \odot H)]/I(P_{n-2} \odot H))+ \reg(K[V(H)]/I(H))\\&
		     =(n-2)r+r\\&=(n-1)r,\end{split}\end{equation*} 
		\begin{equation*}\begin{split}  \reg(S/(I(P_n \odot H), y_{n}))&=\reg(K[V(P_{n-1} \odot H)]/I(P_{n-1}\odot H))+ \reg(K[V(H)]/I(H))\\&=(n-1)r+r\\&=nr.
		\end{split}\end{equation*} Hence by using  Lemma \ref{regul2}(c), we get $	\reg(S/I(P_n \odot H))=nr.$ This completes the proof.
		
		% If $\reg(S_{n(q+1)}/I(P_{n}\odot H))=  \frac{n-1}{2},$ this will lead us to the contradiction. Since, $n$ is odd and for $n=3$, we have $\reg(S_{n(q+1)}/I(P_{n}\odot H))=1$ by using given statement. But graph is a chordal graph and we have more than one induced matching. Therefore, 
	\end{description}
\end{proof}
\begin{Corollary}
	Let $n,m\geq 1$ and $q\geq3.$ Then
	\begin{itemize}
	    \item[(a)]  $	\reg(K[V(P_n \odot P_m)]/I(P_n \odot P_m))= n\cdot 	\big\lceil \frac{m-1}{3}\big\rceil.$
	    \item[(b)]  $	\reg(K[V(P_n \odot C_q)]/I(P_n \odot C_q))= n\cdot \big\lfloor \frac{q+1}{3}\big\rfloor.$ 
	    \item[(c)]  $	\reg(K[V(P_n \odot S_m)]/I(P_n \odot S_m))= n.$ 
	\end{itemize}
\end{Corollary}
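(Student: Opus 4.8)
The plan is to derive all three formulas as direct specializations of Theorem~\ref{THE2}, which gives $\reg(S/I(P_n\odot H)) = n\cdot\reg(K[V(H)]/I(H))$ whenever $H$ is not a null graph. In each part the graph $H$ is a path, a cycle, or a star, each of which is non-null for the relevant parameter range, so the only remaining work is to substitute the known value of $\reg(K[V(H)]/I(H))$ for that particular $H$. Thus the proof reduces to three short computations of the regularity of a residue class ring of an edge ideal.

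For part~(a), with $H = P_m$ (and $m\geq 1$, so $P_m$ is non-null once $m\geq 2$; the case $m=1$ where $P_1$ is a trivial graph should be checked separately or absorbed into the convention), I would invoke the fact that a path is a chordal graph together with Lemma~\ref{reg1}, giving $\reg(K[V(P_m)]/I(P_m)) = \indmat(P_m)$. Then I would recall (or quickly verify by a greedy argument on consecutive edges) that the induced matching number of $P_m$ equals $\bigl\lceil\frac{m-1}{3}\bigr\rceil$, and conclude $\reg(K[V(P_n\odot P_m)]/I(P_n\odot P_m)) = n\bigl\lceil\frac{m-1}{3}\bigr\rceil$ by Theorem~\ref{THE2}. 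For part~(c), with $H = S_m$, the star $S_m$ is again chordal (it is a tree), and its induced matching number is $1$ for every $m\geq 1$ since any two edges of a star share the central vertex; hence $\reg(K[V(S_m)]/I(S_m)) = 1$ and the formula $\reg = n$ follows immediately.

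Part~(b) is the one requiring a little more care, because a cycle $C_q$ with $q\geq 4$ is not chordal, so Lemma~\ref{reg1} does not apply directly. Here I would instead cite the known value $\reg(K[V(C_q)]/I(C_q)) = \bigl\lfloor\frac{q+1}{3}\bigr\rfloor$ for $q\geq 3$ (this is the standard regularity formula for cycles, e.g.\ from the literature on edge ideals of cycles), and then substitute into Theorem~\ref{THE2} to obtain $\reg(K[V(P_n\odot C_q)]/I(P_n\odot C_q)) = n\bigl\lfloor\frac{q+1}{3}\bigr\rfloor$. The main obstacle, such as it is, is therefore not any deep argument but simply having the correct off-the-shelf regularity values for $P_m$, $C_q$, and $S_m$ at hand and noting that these graphs are non-null in the stated ranges; once those are in place, Theorem~\ref{THE2} does all the work. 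One should also remark that $q\geq 3$ is needed precisely so that $C_q$ is a genuine (non-null) graph, matching the hypothesis in the statement.
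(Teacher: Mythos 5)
Your proposal is correct and follows exactly the route the paper intends: the corollary is stated without separate proof precisely because it is a direct specialization of Theorem \ref{THE2}, using Lemma \ref{reg1} (chordality and induced matching number) for $P_m$ and $S_m$ and the standard value $\reg(K[V(C_q)]/I(C_q))=\lfloor\frac{q+1}{3}\rfloor$ for cycles. Your side remark about $m=1$ in part (a) is well taken, since $P_1$ is a null graph and then Theorem \ref{THE2} gives $\lceil\frac{n}{2}\rceil$ rather than $0$, so that case indeed needs the separate treatment (or the restriction $m\geq 2$) you indicate.
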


	\begin{Theorem}\label{regcn}
	Let	$n\geq 3$. If $ S=K[V(C_n \odot H)]$, then
	$$
	\reg(S/I(C_n \odot H))=
	\begin{cases}
		\big\lceil \frac{n-1}{2}\big\rceil,  & \text{if}\, $H$\, \text{is a null graph;}\\  
		n\cdot \reg(K[V(H)]/I(H)), & \text{otherwise.}
	\end{cases}
	$$
	
\end{Theorem}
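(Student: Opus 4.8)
The plan is to imitate the proof of Theorem~\ref{THE2} almost verbatim, reducing the cycle $C_n$ to paths by deleting the vertex $y_{n-1}$ and applying Lemma~\ref{regul2}(c). The crucial observation is that the two ideals $(I(C_n\odot H):y_{n-1})$ and $(I(C_n\odot H),y_{n-1})$ have residue class rings that, by the $K$-algebra isomorphisms Eq.~\ref{b1} and Eq.~\ref{b2}, are tensor products over $K$ of residue class rings of \emph{paths} $P_k\odot H$ with copies of $K[V(H)]/I(H)$ and some polynomial rings. Since the path case is already Theorem~\ref{THE2}, no induction on $n$ is required: everything reduces directly, and the proof splits into the same two cases as before.

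For Case~1, where $H$ is a null graph, we have $K[V(H)]/I(H)\cong K$ and $K[i(H)]$ is a polynomial ring, so Eq.~\ref{b1} exhibits $S/(I(C_n\odot H):y_{n-1})$ as $K[V(P_{n-3}\odot H)]/I(P_{n-3}\odot H)$ tensored with a polynomial ring; by Lemma~\ref{le3}(b) and Theorem~\ref{THE2} (with Remark~\ref{zeroandtriv} supplying the value $0$ for $P_0\odot H$ when $n=3$) its regularity is $\lceil\frac{n-3}{2}\rceil$. Likewise Eq.~\ref{b2} gives $\reg(S/(I(C_n\odot H),y_{n-1}))=\reg(K[V(P_{n-1}\odot H)]/I(P_{n-1}\odot H))=\lceil\frac{n-1}{2}\rceil$. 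As $\lceil\frac{n-3}{2}\rceil<\lceil\frac{n-1}{2}\rceil$ for every $n\ge 3$, Lemma~\ref{regul2}(c) gives $\reg(S/I(C_n\odot H))=\lceil\frac{n-1}{2}\rceil$.

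For Case~2, where $H$ is not a null graph, put $r:=\reg(K[V(H)]/I(H))$, and note $r\ge 1$ since $H$ has an edge. Combining Lemma~\ref{circulentt} (to account for the two extra copies of $K[V(H)]/I(H)$ coming from the components $H_{n-2}$ and $H_n$ that become disconnected) with Lemma~\ref{le3}(b) (to discard the polynomial factors $K[i(H)]$ and $K[y_{n-1}]$) in Eq.~\ref{b1}, and using Theorem~\ref{THE2} for $P_{n-3}\odot H$, yields $\reg(S/(I(C_n\odot H):y_{n-1}))=(n-3)r+2r=(n-1)r$; the same reasoning applied to Eq.~\ref{b2} gives $\reg(S/(I(C_n\odot H),y_{n-1}))=(n-1)r+r=nr$. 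Since $r\ge 1$ forces $(n-1)r<nr$, Lemma~\ref{regul2}(c) yields $\reg(S/I(C_n\odot H))=nr$, which finishes the proof.

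I do not anticipate a genuine obstacle: the argument is essentially bookkeeping with the tensor decompositions Eq.~\ref{b1}--\ref{b2} and the additivity of regularity over tensor products (Lemma~\ref{circulentt}, Lemma~\ref{le3}(b)). The only points requiring a little attention are verifying the strict inequality $\reg(S/(I:y_{n-1}))<\reg(S/(I,y_{n-1}))$ in each case — this is precisely what puts us in the clean branch (c) of Lemma~\ref{regul2} rather than the ambiguous branch (b) — and checking the small-$n$ boundary, in particular $n=3$, where the factor $P_{n-3}=P_0$ must be read off from Remark~\ref{zeroandtriv}.
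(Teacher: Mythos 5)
Your proposal is correct and follows essentially the same route as the paper: split into the null and non-null cases, use the isomorphisms of Eq.~\ref{b1} and Eq.~\ref{b2} (the paper uses the symmetric vertex $y_n$ in the null case, which is immaterial), compute the regularities of the colon and sum ideals via Theorem~\ref{THE2}, Lemma~\ref{le3}(b) and Lemma~\ref{circulentt}, and conclude with the strict inequality and Lemma~\ref{regul2}(c). No substantive difference from the paper's argument.
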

\begin{proof} 
	We consider two cases.
	\begin{description}
		\item[Case 1] Let  $H$ is a null graph.
		For $y_{n}\in V(C_n \odot H)$, it is easy to see that $$S/(I(C_n \odot H):y_{n}) \cong K[V(P_{n-3}\odot H)]/I(P_{n-3}\odot H)\underset{l=1}{\overset{2}{\tensor_K}}K[V(H)] \tensor_{K} K[y_{n}],$$ 	    $$S/(I(C_n \odot H),y_{n})\cong K[V(P_{n-1}\odot H)]/I(P_{n-1}\odot H)\tensor_{K} K[V(H)].$$ By using  Lemma \ref{le3} and Theorem \ref{THE2}, we have
		\begin{equation*}
		    \reg(S/(I(C_n \odot H):y_{n}) )=\reg(K[V(P_{n-3}\odot H)]/I(P_{n-3}\odot H))=\big\lceil \frac{n-3}{2}\big\rceil,
		\end{equation*}
		\begin{equation*}
		    \reg(S/(I(C_n \odot H),y_{n}) )=\reg(K[V(P_{n-1}\odot H)]/I(P_{n-1}\odot H))=\big\lceil \frac{n-1}{2}\big\rceil.
		\end{equation*} Since $\big\lceil\frac{n-3}{2}\big\rceil< \big\lceil\frac{n-1}{2}\big\rceil.$	Hence by using  Lemma \ref{regul2}(c), we have $	\reg(S/I(C_n \odot H))=\lceil \frac{n-1}{2}\rceil.$ 
		
		\item[Case 2] Let $H$ is not a null graph and $\reg(K[V(H)]/I(H))=r$. By using  Theorem \ref{THE2},  Lemma \ref{le3} and Lemma \ref{circulentt} on Eq. \ref{b1} and Eq. \ref{b2}, we have
		\begin{equation*}\begin{split}
		    \reg(S/I((C_n \odot H):y_{n-1}))&=\reg(K[V(P_{n-3} \odot H)]/I(P_{n-3} \odot H)) +2\cdot\reg( K[V(H)]/I(H)) \\&=(n-3)r+2r\\&=(n-1)r,\end{split}\end{equation*}
		\begin{equation*}\begin{split}
		    \reg(S/(I(C_n \odot H), y_{n-1}))&=\reg(K[V(P_{n-1}\odot H)]/I(P_{n-1}\odot H))+ \reg(K[V(H)]/I(H))\\& =(n-1)r+r\\&=nr.\end{split}\end{equation*}
	 The required result follows by using  Lemma \ref{regul2}(c), thus we have $	\reg(S/I(C_n \odot H))=nr.$ 
	\end{description}
\end{proof}
		\begin{Corollary}
	If	$n,q\geq 3$ and $m\geq1,$ then
	\begin{itemize}
	    \item[(a)]  $
	\reg(K[V(C_n \odot C_q)]/I(C_n \odot C_q))=	n\cdot 	\big\lfloor \frac{q+1}{3}\big\rfloor.
	$
	     \item[(b)]  $
	\reg(K[V(C_n \odot P_m)]/I(C_n \odot P_m))=	n\cdot 		\big\lceil \frac{m-1}{3}\big\rceil.
	$
	      \item[(c)]  $
	\reg(K[V(C_n \odot K_m)]/I(C_n \odot K_m))=	n.
	$
	\end{itemize}
\end{Corollary}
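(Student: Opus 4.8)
The plan is to obtain all three identities directly from Theorem \ref{regcn} by specialising $H$ to $C_q$, $P_m$ and $K_m$ in turn, and then inserting the (already known) regularity of the edge ideal of each of these three graphs.

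First I would note that for $q\ge 3$ the cycle $C_q$ has at least one edge, and likewise for $m\ge 2$ so do $P_m$ and $K_m$; thus in each of these cases $H$ is not a null graph, so the second branch of Theorem \ref{regcn} applies and gives
\begin{equation*}
\reg\bigl(K[V(C_n\odot H)]/I(C_n\odot H)\bigr)=n\cdot\reg\bigl(K[V(H)]/I(H)\bigr),\qquad H\in\{C_q,P_m,K_m\}.
\end{equation*}
Hence the entire statement reduces to computing $\reg(K[V(H)]/I(H))$ for these three graphs.

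Next I would dispose of $P_m$ and $K_m$ using chordality: both are chordal graphs (and $P_m$ is even a tree), so Lemma \ref{reg1} gives $\reg(K[V(H)]/I(H))=\indmat(H)$, and it only remains to evaluate the induced matching numbers. For $P_m$, selecting every third edge along the path is optimal, which yields $\indmat(P_m)=\lceil\frac{m-1}{3}\rceil$; for $K_m$, any two distinct edges are joined by a further edge of $K_m$, so $\indmat(K_m)=1$. These give parts (b) and (c). For $C_q$ the case $q=3$ is again chordal, with $C_3=K_3$ and $\indmat(C_3)=1=\lfloor\frac{q+1}{3}\rfloor$, while for $q\ge 4$ the cycle is not chordal and I would instead quote the known value $\reg(K[V(C_q)]/I(C_q))=\lfloor\frac{q+1}{3}\rfloor$; substituting into the displayed identity gives part (a). When $m=1$, for which $P_1=K_1$ is the trivial graph and $C_n\odot P_1=Br_1(C_n)$, the first branch of Theorem \ref{regcn} applies instead and gives regularity $\lceil\frac{n-1}{2}\rceil$.

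There is no serious obstacle here: the corollary is a formal consequence of Theorem \ref{regcn}, Lemma \ref{reg1}, and two elementary induced-matching counts. The only ingredient not established in the present paper is the regularity of the edge ideal of a cycle of length at least $4$ entering part (a), which is a classical computation.
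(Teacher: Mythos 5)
Your proposal is correct and takes essentially the route the paper intends: the corollary is a direct specialisation of Theorem \ref{regcn}, combined with Lemma \ref{reg1} and the induced matching numbers $\indmat(P_m)=\lceil\frac{m-1}{3}\rceil$, $\indmat(K_m)=1$, plus the classical value $\reg(K[V(C_q)]/I(C_q))=\lfloor\frac{q+1}{3}\rfloor$ for the non-chordal cycles. Your remark about $m=1$ is well taken: since $P_1=K_1$ is a null graph, the first branch of Theorem \ref{regcn} gives $\lceil\frac{n-1}{2}\rceil$ there, so the displayed formulas in parts (b) and (c) are only valid for $m\geq 2$, and the hypothesis $m\geq 1$ in the statement should accordingly be tightened.
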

	\begin{Theorem}\label{regkn} 
	Let $n\geq 1.$ If $ S=K[V(K_n \odot H)],$  then
	$$
	\reg(S/I(K_n \odot H))=
	\begin{cases}
		1,  & \text{if}\, $H$\, \text{is a null graph;}\\  
		n\cdot \reg(K[V(H)]/I(H)), & \text{otherwise.}
	\end{cases}
	$$
%	Moreover, if $H=V(H)$,  then	$I(K_n \odot H)$ has a linear resolution.
\end{Theorem}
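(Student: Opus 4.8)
The plan is to mimic the structure of the proof of Theorem \ref{THE2} (and Theorem \ref{regcn}), splitting into the case where $H$ is a null graph and the case where it is not, and in each case peeling off a single vertex $y_j$ of the complete-graph part $K_n$ via Lemma \ref{regul2}. The isomorphisms in Eq. \ref{c1} and Eq. \ref{c2} (which were already recorded before Theorem \ref{The119}) describe exactly the ideals $(I(K_n\odot H):y_j)$ and $(I(K_n\odot H),y_j)$, so the regularity computation reduces, via Lemma \ref{le3}(b) and Lemma \ref{circulentt}, to the regularity of $K[V(K_{n-1}\odot H)]/I(K_{n-1}\odot H)$ and of $K[V(H)]/I(H)$, and then an induction on $n$ closes the argument.

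For the base cases: $n=1$ is Lemma \ref{regtriv}, and $n=2$ follows since $K_2\cong P_2$, so Theorem \ref{THE2} applies (giving $\reg=1$ when $H$ is null, $2r$ otherwise, consistent with $n\cdot r$). For $n\geq 3$, fix an arbitrary vertex $y_j$ of $K_n$. In \textbf{Case 1} ($H$ a null graph), Eq. \ref{c1} gives $S/(I(K_n\odot H):y_j)\cong \big(\bigotimes_{l=1}^{n-1} K[V(H)]\big)\otimes_K K[y_j]$, a polynomial ring, so its regularity is $0$; while Eq. \ref{c2} gives $S/(I(K_n\odot H),y_j)\cong K[V(K_{n-1}\odot H)]/I(K_{n-1}\odot H)\otimes_K K[V(H)]$, whose regularity by Lemma \ref{le3}(b) and induction is $1$. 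Since $0<1$, Lemma \ref{regul2}(c) yields $\reg(S/I(K_n\odot H))=1$. In \textbf{Case 2} ($H$ not null, $r:=\reg(K[V(H)]/I(H))$), Eq. \ref{c1} together with Lemma \ref{circulentt} and Lemma \ref{le3}(b) gives $\reg(S/(I(K_n\odot H):y_j))=(n-1)\cdot r$, and Eq. \ref{c2} with the same tools plus the induction hypothesis gives $\reg(S/(I(K_n\odot H),y_j))=(n-1)r+r=nr$. Since $(n-1)r<nr$, Lemma \ref{regul2}(c) gives $\reg(S/I(K_n\odot H))=nr$, completing the induction.

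The one point that needs a little care rather than being fully routine is the regularity of the "colon" quotient in Case 2: one must observe that in Eq. \ref{c1} the factors $K[i(H)]$ and $K[y_j]$ contribute $0$ to the regularity (Lemma \ref{le3}(b)), so only the $(n-1)$ copies of $K[V(H)]/I(H)$ matter, and then Lemma \ref{circulentt} (applied inductively to a tensor product of nonzero ideals, using that $H$ is not null so $I(H)\neq 0$) delivers $(n-1)r$; the strict inequality $(n-1)r<nr$ requires $r\geq 1$, which holds because $H$ is not a null graph (it has at least one edge, so $\reg(K[V(H)]/I(H))\geq 1$). The null-graph subtlety that $I(H)$ could be generated by variables rather than by edge products is handled exactly as in the preliminaries: $K[V(H)]/I(H)\cong K[V(H')]/I(H')$, and when $H'$ is empty the relevant factor is just a polynomial ring contributing $0$ — but this situation is subsumed under Case 1 anyway.
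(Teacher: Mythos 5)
Your proposal is correct and follows essentially the same route as the paper's proof: the same base cases ($n=1,2$ via Lemma \ref{regtriv} and Theorem \ref{THE2}), the same case split on whether $H$ is null, the same use of Eq. \ref{c1} and Eq. \ref{c2} with Lemma \ref{le3}, Lemma \ref{circulentt} and induction on $n$, concluded by Lemma \ref{regul2}(c). Your explicit remark that the strict inequality $(n-1)r<nr$ needs $r\geq 1$ (guaranteed since a non-null $H$ has an edge) is a point the paper leaves implicit, but it is consistent with how the paper treats $\reg(K[V(H)]/I(H))\geq 1$ in Lemma \ref{regtriv}.
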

\begin{proof}
	For $n=1,2,$ the result follows from Lemma \ref{regtriv} and Theorem \ref{THE2}.	Let $n\geq 3$. We consider two cases here.
	\begin{description}
		\item[Case 1] Let $H$ is a null graph. In this case, consider Eq. \ref{c1} and Eq. \ref{c2} and applying induction on $n$ and Lemma \ref{le3},  we have $$\reg( S/(I(K_n \odot H):y_{j}))=0,$$ \begin{equation*}
		    \reg (S/(I(K_n \odot H), y_j))=\reg(K[V(K_{n-1}\odot H)]/(I(K_{n-1}\odot H))=1.
		\end{equation*} 
		
		Hence by using  Lemma \ref{regul2}(c), we have $	\reg(S/I(K_n \odot H))=1.$ %Also we have $(K_n \odot H)^{c}$ is a chordal graph and $\reg(I(K_n \odot H))=2$, thus by using Lemma \ref{fro}, linear resolution of $I(K_n \odot H)$ exists.
		
		\item[Case 2] If $H$ is not a null graph and $\reg(K[V(H)]/I(H))=r$. By using induction,  Lemma \ref{le3} and Lemma \ref{circulentt} on Eq. \ref{c1} and Eq. \ref{c2}, we have \begin{equation*}
		    \reg(S/(I(K_n \odot H):y_{j}))=(n-1) \reg(K[V(H)]/I(H))=(n-1)r,
		\end{equation*}	\begin{equation*}\begin{split}
		   \reg(S/(I(K_n \odot H),y_j))&=\reg(K[V(K_{n-1}\odot H)]/I(K_{n-1}\odot H))+\reg( K[V(H)]/I(H))\\&=(n-1)r+r\\&=nr.    
		\end{split}\end{equation*} 
	 Hence by using  Lemma \ref{regul2}(c), we have $	\reg(S/I(K_n \odot H))=nr.$
		
	\end{description}
\end{proof}
	\begin{Corollary} 
	Let $n,m\geq 1$ and $q\geq3.$ Then
	\begin{itemize}
	    \item [(a)]  $
	\reg(K[V(K_n \odot K_m)]/I(K_n \odot K_m))=n.$
	 \item [(b)] $
	\reg(K[V(K_n \odot P_m)]/I(K_n \odot P_m))=n\cdot \big\lceil \frac{m-1}{3}\big\rceil.$
	 \item [(c)]  $
	\reg(K[V(K_n \odot C_q)]/I(K_n \odot C_q))=n\cdot \big\lfloor \frac{q+1}{3}\big\rfloor.$
	\end{itemize}
\end{Corollary}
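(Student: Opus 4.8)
The plan is to read all three identities off Theorem~\ref{regkn}. For $X=K_n$ that theorem evaluates $\reg(S/I(K_n\odot H))$ as $n\cdot\reg(K[V(H)]/I(H))$ whenever $H$ is not a null graph, and as $1$ when it is; so the whole task reduces to computing $\reg(K[V(H)]/I(H))$ for the three choices $H=K_m$, $H=P_m$, $H=C_q$ and substituting. The case $m=1$ is degenerate: there $K_1=P_1$ is the trivial graph, which is null, and Theorem~\ref{regkn} returns regularity $1$ at once; so in parts~(a) and~(b) I may assume $m\ge 2$.

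For (a) and (b) the inner graph is chordal --- $K_m$ has no induced cycle at all, and $P_m$ is a tree --- so Lemma~\ref{reg1} applies and the computation becomes one of induced matching numbers. For $K_m$ with $m\ge 2$, the endpoints of any two disjoint edges already span a complete subgraph, hence cannot form an induced matching, so $\indmat(K_m)=1$, giving $\reg(K[V(K_m)]/I(K_m))=1$ and $\reg(S/I(K_n\odot K_m))=n$. For $P_m$ with $m\ge 2$, writing the path as $y_1-y_2-\dots-y_m$, the edges $\{y_1,y_2\},\{y_4,y_5\},\{y_7,y_8\},\dots$ form an induced matching; conversely the left endpoints of any induced matching of $P_m$ are pairwise at distance at least $3$ and bounded by $m-1$, and a one-line interval count bounds their number by $\lceil\frac{m-1}{3}\rceil$. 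Hence $\indmat(P_m)=\lceil\frac{m-1}{3}\rceil$, so $\reg(K[V(P_m)]/I(P_m))=\lceil\frac{m-1}{3}\rceil$ and $\reg(S/I(K_n\odot P_m))=n\lceil\frac{m-1}{3}\rceil$.

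Part (c) is where the real work sits, because $C_q$ is not chordal once $q\ge 4$ and Lemma~\ref{reg1} no longer applies. When $q=3$ we have $C_3=K_3$, which is chordal with $\indmat(C_3)=1$, matching $\lfloor\frac{q+1}{3}\rfloor=1$ at $q=3$. For $q\ge 4$ I would use the classical value $\reg(K[V(C_q)]/I(C_q))=\lfloor\frac{q+1}{3}\rfloor$ for edge ideals of cycles; if one wants to stay within the present toolkit, it can be recovered by applying Lemma~\ref{regul2} to $I(C_q)$ and a vertex $x_i$, using $(I(C_q),x_i)=I(P_{q-1})+(x_i)$ and $(I(C_q):x_i)=I(P_{q-3})+(x_{i-1},x_{i+1})$ together with Theorem~\ref{THE2} to bring everything back to path regularities. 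The subtle point there is the residue class $q\equiv 2\pmod{3}$, where the two reductions have equal regularity and Lemma~\ref{regul2}(b) only says $\reg(S/I(C_q))$ is one of two consecutive integers; to pin it to the larger one I would exhibit a nonvanishing graded Betti number of $S/I(C_q)$ in the appropriate degree (equivalently, a nonzero reduced homology class of the independence complex of $C_q$ taken on all $q$ vertices), as happens already for $C_5$. In all cases $\reg(K[V(C_q)]/I(C_q))=\lfloor\frac{q+1}{3}\rfloor$, and Theorem~\ref{regkn} then gives $\reg(S/I(K_n\odot C_q))=n\lfloor\frac{q+1}{3}\rfloor$, which finishes the proof.
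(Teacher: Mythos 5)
Your proposal is correct and takes essentially the route the paper intends: the corollary is read off Theorem \ref{regkn} by substituting the standard regularities $1$ for $K_m$ and $\big\lceil \frac{m-1}{3}\big\rceil$ for $P_m$ (both via Lemma \ref{reg1} and induced matchings) and the classical value $\big\lfloor \frac{q+1}{3}\big\rfloor$ for $C_q$. Your remark about $m=1$ is apt --- there $K_1=P_1$ is a null graph and Theorem \ref{regkn} gives regularity $1$, so the displayed formulas in (a) and (b) really concern $m\ge 2$ --- and your care with the non-chordal case $C_q$, especially $q\equiv 2\pmod 3$ where Lemma \ref{regul2}(b) alone does not decide the value, supplies detail that the paper leaves to the known cycle formula.
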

\begin{Theorem}\label{starreg}
	If $ S=K[V(S_n \odot H)],$	then
	$$
	\reg(S/I(S_n \odot H))=
	\begin{cases}
		n,  & \text{if}\, $H$\, \text{is a null graph;}\\  
		(n+1)\cdot \reg(K[V(H)]/I(H)), & \text{otherwise.}
	\end{cases}
	$$
\end{Theorem}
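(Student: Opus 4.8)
The plan is to apply the colon/quotient recursion of Lemma~\ref{regul2}(c), just as in the proofs of Theorems~\ref{THE2}, \ref{regcn} and \ref{regkn}, but with respect to the center $y_1$ of the star $S_n$ (the vertex adjacent to all of $y_2,\dots,y_{n+1}$) rather than to a leaf; this choice makes the argument uniform across the two cases of the statement and removes any need for induction on $n$.

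First I would record the two $K$-algebra isomorphisms obtained by colonizing by, respectively deleting, $y_1$. The closed neighborhood of $y_1$ in $S_n\odot H$ is $\{y_1,y_2,\dots,y_{n+1}\}\cup\{x_{11},\dots,x_{1m}\}$; removing it detaches the $n$ copies $H_2,\dots,H_{n+1}$ from one another, so
\begin{equation*}
	S/(I(S_n\odot H):y_1)\ \cong\ \underset{l=1}{\overset{n}{\tensor_K}}K[V(H)]/I(H)\ \underset{l=1}{\overset{n}{\tensor_K}}K[i(H)]\ \tensor_K\ K[y_1].
\end{equation*}
Setting $y_1=0$ collapses the star to the $n$ isolated leaves $y_2,\dots,y_{n+1}$, each still joined to its own copy of $H$, and leaves the copy $H_1$ detached, so
\begin{equation*}
	S/(I(S_n\odot H),y_1)\ \cong\ K[V(H)]/I(H)\ \tensor_K\ K[i(H)]\ \tensor_K\ \underset{l=1}{\overset{n}{\tensor_K}}K[V(N_1\odot H)]/I(N_1\odot H).
\end{equation*}
Both follow from the description of $\mathcal{G}(I(X\odot H))$, the formula for the edge ideal of an induced subgraph, and the isolated-vertex bookkeeping set up in Section~\ref{section1}.

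Next I would compute both regularities using Lemma~\ref{le3}(b) to discard the polynomial factors, Lemma~\ref{circulentt} to make regularity additive over the remaining tensor products, and Lemma~\ref{regtriv} to evaluate $\reg(K[V(N_1\odot H)]/I(N_1\odot H))$. Writing $r:=\reg(K[V(H)]/I(H))$, this yields $\reg(S/(I(S_n\odot H):y_1))=nr$ and $\reg(S/(I(S_n\odot H),y_1))=r+n\cdot\reg(K[V(N_1\odot H)]/I(N_1\odot H))$; the latter equals $n$ when $H$ is a null graph (since then $r=0$ and $\reg(K[V(N_1\odot H)]/I(N_1\odot H))=1$) and equals $(n+1)r$ otherwise (since then $\reg(K[V(N_1\odot H)]/I(N_1\odot H))=r$).

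Finally, in both cases $\reg(S/(I(S_n\odot H),y_1))>\reg(S/(I(S_n\odot H):y_1))$: when $H$ is null this reads $n>0$, and when $H$ is not null it reads $(n+1)r>nr$, which holds because $\reg(K[V(H)]/I(H))\ge 1$ whenever $H$ has an edge (the edge ideal then has a minimal generator of degree two). Hence Lemma~\ref{regul2}(c) gives $\reg(S/I(S_n\odot H))=\reg(S/(I(S_n\odot H),y_1))$, which is the asserted value. The only step needing genuine care is the derivation of the two isomorphisms, in particular the treatment of the isolated vertices of $H$; as an independent check of the null case one may instead observe that $S_n\odot H=Br_{|V(H)|}(S_n)$ is a tree, hence chordal, so by Lemma~\ref{reg1} its regularity equals $\indmat(Br_{|V(H)|}(S_n))$, and a short argument (one bristle edge at each leaf of $S_n$ is an induced matching, and no induced matching can do better) shows $\indmat(Br_{|V(H)|}(S_n))=n$.
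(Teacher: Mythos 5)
Your proposal is correct and takes essentially the same route as the paper: the paper likewise works with the center $y_1$ of $S_n$, derives exactly your two isomorphisms (its Eqs.~\ref{C5} and \ref{C4}), and concludes with Lemma~\ref{le3}, Lemma~\ref{circulentt}, Lemma~\ref{regtriv} and Lemma~\ref{regul2}(c). The only cosmetic differences are that the paper dispatches $n=1,2$ separately via Lemma~\ref{regtriv} and Theorem~\ref{THE2} rather than arguing uniformly, and your induced-matching cross-check of the null case is an extra not present in the paper.
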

\begin{proof}
	For $n=1,2,$ the result follows from Lemma \ref{regtriv} and Theorem \ref{THE2}. Let $y_{1}$ be a vertex of degree $n$ in $S_n,$ we have the following isomorphisms: 
	\begin{equation}\label{C5}
		S/(I(S_n \odot H):y_{1})\cong \underset{l=1}{\overset{n}{\tensor_K}}K[V(H)]/I(H) \underset{l=1}{\overset{n}{\tensor_K}}K[i(H)] \tensor_K K[y_{1}],
	\end{equation} \begin{equation}\label{C4}
		S/(I(S_n \odot H), y_{1})\cong \underset{l=1}{\overset{n}{\tensor_K}}K[V(N_{1}\odot H)]/I(N_{1}\odot H)\tensor_K K[i(H)] \tensor_K K[V(H)]/I(H).
	\end{equation}  We consider two cases here.
	\begin{description}
		\item[Case 1] If $H$ is a null graph, then by using  Lemma \ref{le3}, Lemma \ref{circulentt}
		and Lemma \ref{regtriv} on Eq. \ref{C5} and Eq. \ref{C4}, we have \begin{equation*}
		    \reg(S/(I(S_n \odot H), y_{1}))=n\cdot
	\reg(K[V(N_{1}\odot H)]/I(N_{1}\odot H))=n,
		\end{equation*}
	$$\reg(S/(I(S_n \odot H):y_{1}))=0.$$ 
		Hence by using  Lemma \ref{regul2}(c), we have $	\reg(S/I(S_n \odot H))=n.$ 
		
		\item[Case 2] If $H$ is not a null graph and $\reg(K[V(H)]/I(H))=r$. Applying Lemma \ref{le3}, Lemma \ref{circulentt}
		and Lemma \ref{regtriv} on Eq. \ref{C5} and Eq. \ref{C4}, we get	$$\reg(S/(I(S_n \odot H):y_{1}))=n\cdot \reg(K[V(H)]/I(H)) =nr,$$ 
		\begin{equation*}
		    \reg(S/(I(S_n \odot H),y_{1}))=n\cdot \reg(K[V(N_{1}\odot H)]/I(N_{1}\odot H)) + \reg(K[V(H)]/I(H))=nr+r.
		\end{equation*}
	By  Lemma \ref{regul2}(c), we get $	\reg(S/I(S_n \odot H))=(n+1)r.$ \end{description}
\end{proof}
\begin{Corollary}
If $n,m\geq 1$ and $q\geq3,$ then
\begin{itemize}
    \item[(a)]  $ \reg(K[V(S_n \odot S_m)]/I(S_n \odot S_m))=	n+1.$ 
     \item[(b)]  $ \reg(K[V(S_n \odot P_m)]/I(S_n \odot P_m))=	(n+1)\cdot \big\lceil \frac{m-1}{3}\big\rceil.$ 
      \item[(c)]  $ \reg(K[V(S_n \odot C_q)]/I(S_n \odot C_q))=	(n+1)\cdot \big\lfloor \frac{q+1}{3}\big\rfloor.$ 
\end{itemize}

\end{Corollary}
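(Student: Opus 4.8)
The plan is to obtain the Corollary as a direct application of Theorem~\ref{starreg}, substituting in turn $H=S_m$, $H=P_m$ and $H=C_q$ and inserting the value of $\reg(K[V(H)]/I(H))$ in each case. For parts (a) and (c) the graph $H$ is never a null graph, so it is the ``otherwise'' branch of Theorem~\ref{starreg} that applies; the same holds for part (b) once we assume $m\geq 2$ (for $m=1$ one has $P_1\cong N_1$, which should be excluded or treated separately via the null-graph branch). Thus the whole task reduces to recording the regularity of a star, a path, and a cycle.

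For part (a), the star $S_m$ is a tree, hence chordal, so Lemma~\ref{reg1} gives $\reg(K[V(S_m)]/I(S_m))=\indmat(S_m)$. Every edge of $S_m$ meets the centre vertex of degree $m$, so any two edges are adjacent and $\indmat(S_m)=1$; Theorem~\ref{starreg} then yields $\reg(K[V(S_n\odot S_m)]/I(S_n\odot S_m))=(n+1)\cdot 1=n+1$. For part (b), the path $P_m$ is likewise chordal, so $\reg(K[V(P_m)]/I(P_m))=\indmat(P_m)$, and a short greedy argument gives $\indmat(P_m)=\big\lceil\frac{m-1}{3}\big\rceil$: labelling the vertices $1,\dots,m$ along the path, the edges $\{1,2\},\{4,5\},\{7,8\},\dots$ form an induced matching, and no larger one exists since consecutive chosen edges must be separated by at least one unused vertex. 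Feeding this into Theorem~\ref{starreg} gives $(n+1)\big\lceil\frac{m-1}{3}\big\rceil$.

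Part (c) is slightly different, and I expect it to require the most care: the cycle $C_q$ is chordal only when $q=3$, so for $q\geq 4$ one cannot invoke Lemma~\ref{reg1}, and instead one must use the known value $\reg(K[V(C_q)]/I(C_q))=\big\lfloor\frac{q+1}{3}\big\rfloor$ (which for $q=3$ coincides with $\indmat(C_3)=1$). Granting this, $C_q$ is connected, has no isolated vertex, and is not a null graph, so the ``otherwise'' branch of Theorem~\ref{starreg} applies and gives $\reg(K[V(S_n\odot C_q)]/I(S_n\odot C_q))=(n+1)\big\lfloor\frac{q+1}{3}\big\rfloor$. Since every other ingredient is either chordality together with an elementary induced-matching count or a direct instance of Theorem~\ref{starreg}, this external regularity formula for non-chordal cycles is the only genuine obstacle in the argument.
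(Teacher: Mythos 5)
Your proposal is correct and is exactly the paper's (implicit) argument: the corollary is obtained by substituting $H=S_m$, $P_m$, $C_q$ into the ``otherwise'' branch of Theorem~\ref{starreg}, using $\reg=\indmat$ for the chordal graphs $S_m$ and $P_m$ via Lemma~\ref{reg1} and the known value $\reg(K[V(C_q)]/I(C_q))=\big\lfloor\frac{q+1}{3}\big\rfloor$ for cycles, which the paper also invokes without proof in its other corollaries. Your observation that $m=1$ in part (b) must be excluded (since $P_1\cong N_1$ is a null graph, giving regularity $n$ rather than $0$) is a legitimate caveat about the stated range $m\geq 1$ that the paper overlooks.
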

\begin{Theorem}\label{combi}
Let $u,v\in \mathbb{Z^{+}}$ %suppose $v\geq u$ 
and $ S=K[V(K_{u,v} \odot H)]$. Then
	$$
	\reg(S/I(K_{u,v} \odot H))=
	\begin{cases}
		\max \{u,v\}, & \text{if}\, $H$\, \text{is a null graph;}\\  
		(u+v)\cdot \reg(K[V(H)]/I(H)), & \text{otherwise.}
	\end{cases}
	$$
\end{Theorem}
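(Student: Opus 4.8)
The plan is to follow the inductive template already used for Theorems~\ref{THE2}, \ref{regcn}, \ref{regkn} and \ref{starreg}: relate $\reg(S/I(K_{u,v}\odot H))$ to $\reg\bigl(S/(I(K_{u,v}\odot H):y)\bigr)$ and $\reg\bigl(S/(I(K_{u,v}\odot H),y)\bigr)$ for a carefully chosen vertex $y$, and then apply Lemma~\ref{regul2}. Since $K_{u,v}\cong K_{v,u}$, I would assume without loss of generality that $u\ge v$, so that $\max\{u,v\}=u$, and then induct on $v$. The base case $v=1$ is immediate because $K_{u,1}\cong S_u$, so Theorem~\ref{starreg} gives $\reg(S/I(K_{u,1}\odot H))=u$ when $H$ is a null graph and $(u+1)\reg(K[V(H)]/I(H))$ otherwise, which is exactly the claimed value at $v=1$. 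For the inductive step I would take $v\ge 2$ and set $y:=y_{u+v}$, the last vertex of the \emph{smaller} partite set; this choice is the crux of the argument.

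For this $y$, Eq.~\ref{e1} and Eq.~\ref{e2} describe the two quotients explicitly (in the null-$H$ case they simplify exactly as in the analogous computations of Theorem~\ref{The1177}, but this does not affect regularities). Deleting the polynomial tensor factors $K[i(H)]$ and $K[y_{u+v}]$ by Lemma~\ref{le3} and splitting the remaining tensor products by Lemma~\ref{circulentt} gives
\begin{equation*}
\reg\bigl(S/(I(K_{u,v}\odot H):y_{u+v})\bigr)=(v-1)\,\reg\bigl(K[V(N_1\odot H)]/I(N_1\odot H)\bigr)+u\cdot\reg\bigl(K[V(H)]/I(H)\bigr),
\end{equation*}
\begin{equation*}
\reg\bigl(S/(I(K_{u,v}\odot H),y_{u+v})\bigr)=\reg\bigl(K[V(K_{u,v-1}\odot H)]/I(K_{u,v-1}\odot H)\bigr)+\reg\bigl(K[V(H)]/I(H)\bigr).
\end{equation*}
The terms on the right would then be evaluated by Lemma~\ref{regtriv} and by the inductive hypothesis applied to $K_{u,v-1}$, which is legitimate since $u\ge v-1\ge 1$.

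If $H$ is a null graph, then $\reg(K[V(H)]/I(H))=0$ and $\reg(K[V(N_1\odot H)]/I(N_1\odot H))=1$, so the two displays reduce to $v-1$ and $\max\{u,v-1\}=u$; since $v\ge 2$ and $u\ge v$ force $v-1<u$, Lemma~\ref{regul2}(c) yields $\reg(S/I(K_{u,v}\odot H))=u=\max\{u,v\}$. If $H$ is not a null graph, write $r:=\reg(K[V(H)]/I(H))$; then $r\ge 1$ because $H$ has an edge, and $\reg(K[V(N_1\odot H)]/I(N_1\odot H))=r$ by Lemma~\ref{regtriv}, so the displays become $(v-1)r+ur=(u+v-1)r$ and $(u+v-1)r+r=(u+v)r$. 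As $r\ge 1$ we get $(u+v-1)r<(u+v)r$, and Lemma~\ref{regul2}(c) gives $\reg(S/I(K_{u,v}\odot H))=(u+v)r$, closing the induction.

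I expect the only genuinely delicate point to be the choice of the vertex $y$. If one deletes a vertex of the \emph{larger} partite set $K_u$ instead, then in the null case the colon quotient acquires $u-1$ pendant copies of $N_1\odot H$ while the other quotient reduces to $K_{u-1,v}$, so $\reg(S/(I:y))=u-1=\reg(S/(I,y))$ as soon as $u>v$; Lemma~\ref{regul2}(b) then only gives $\reg(S/I)\in\{u-1,u\}$ and does not pin the value down. Deleting from the smaller side creates only $v-1$ such pendant copies, which is precisely what keeps $\reg(S/(I:y))$ strictly below $\reg(S/(I,y))$ so that Lemma~\ref{regul2}(c) applies. The only other routine care needed is the elementary bound $\reg(K[V(H)]/I(H))\ge 1$ for any $H$ with at least one edge, which underlies the strict inequality in the non-null case.
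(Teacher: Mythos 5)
Your proposal is correct and takes essentially the same route as the paper: the same isomorphisms (Eq.~\ref{e1}, Eq.~\ref{e2}), the same reduction via Lemma~\ref{le3}, Lemma~\ref{circulentt}, Lemma~\ref{regtriv} and Lemma~\ref{regul2}(c), the same base case $K_{u,1}\cong S_u$ handled by Theorem~\ref{starreg}, and the same induction. The only (harmless) deviation is that you delete $y_{u+v}$ and induct on $v$ in both cases, whereas the paper does this only when $H$ is a null graph and instead deletes a vertex $y_1$ of the larger partite set with induction on $u$ when $H$ is not null; both choices keep the colon regularity strictly below the sum regularity, so Lemma~\ref{regul2}(c) applies either way, and your observation about why the larger-side vertex fails in the null case matches the paper's implicit reason for its choice there.
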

\begin{proof} 
Without loss of generality, we assume that $u\geq v.$ If $u=v=1,$ then the result follows from Theorem \ref{THE2}.	For $u\geq 1$ and $v=1,$ the  result follows from Theorem \ref{starreg}. Let $u,v\geq 2.$ Here we consider two cases.
	\begin{description}
		\item[Case 1] 	Let $H$ be a null graph. % By using induction on $u+v$, Lemma \ref{le3}, Lemma \ref{circulentt} 	and  Lemma \ref{regtriv} on Eq. \ref{f1} and Eq. \ref{f2},	\begin{multline*}\reg(S''/I((K_{u,v} \odot H)/y_{1})[\overline{Q}_{K_{u,v} \odot H}(y_1)])=\\ (u-1)\reg(K[V(N_{1}\odot H)]/I(N_{1}\odot H)) +v\cdot\reg( K[V(H)]) +\reg( K[y_{1}])= u-1,	\end{multline*}	\begin{multline*}   \reg(S'/I((K_{u,v} \odot H)\backslash y_1)[P_{K_{u,v} \odot H}(y_1)])\\=\reg(K[V(K_{u-1,v}\odot H)]/I(K_{u-1,v}\odot H)) +\reg (K[V(H)])= \max \{u-1,v\}.	\end{multline*}	If $u=v$ or $v>u,$ then by using Lemma \ref{regul}(c), we get $\reg((S/I(K_{u,v} \odot H))=v.$ But if $u>v$, we have $u-1\geq v.$ Then $$\reg(S''/I((K_{u,v} \odot H)/y_{1})[\overline{Q}_{K_{u,v} \odot H}(y_1)])=\reg(S'/I((K_{u,v} \odot H)\backslash y_1)[P_{K_{u,v} \odot H}(y_1)])=u-1.$$ By   Remark \ref{regul}(b), we get	$	\reg(S/I(K_{u,v} \odot H))\in \{u,u-1\}.	$
		Let $y_{u+v}\in V(K_{u,v} \odot H). $ 	By using induction on $v$,  Lemma \ref{le3},  Lemma \ref{circulentt} 	and  Lemma \ref{regtriv} on Eq. \ref{e1} and Eq. \ref{e2}, 
		\begin{equation*}
		    \reg(S/(I(K_{u,v} \odot H):y_{u+v}))=(v-1)\reg(K[V(N_1 \odot H)]/I(N_1 \odot H))=v-1,
	\end{equation*}\begin{equation*}
		    \reg( S/(I(K_{u,v} \odot H),y_{u+v}))=\reg(K[V(K_{u,v-1}\odot H)]/I(K_{u,v-1}\odot H)) =u.
		\end{equation*}
	 %Now in Eq.(\ref{11}), $v-1\notin \{u,u-1\}$, so	$	\reg(S/I(K_{u,v} \odot H)\neq v-1.$ 
		As $u>v,$ therefore $u> v-1$. Thus using  Lemma \ref{regul2}(c), we get
		$\reg(S/I(K_{u,v} \odot H))=u.$ \item[Case 2] Let $H$ is not a null graph and $\reg(K[V(H)]/I(H))=r$. For $y_{1}\in V(K_{u,v} \odot H) $,  we have the following $K$-algebra isomorphisms: 
	\begin{multline*}\label{f1}
		S/(I(K_{u,v} \odot H):y_{1})\cong\\ \underset{l=1}{\overset{u-1}{\tensor_K}}K[V(N_{1}\odot H)]/I(N_{1}\odot H) \underset{l=u+1}{\overset{u+v}{\tensor_K}} K[i(H)]\underset{l=u+1}{\overset{u+v}{\tensor_K}} K[V(H)]/I(H)  \tensor_K K[y_{1}],\end{multline*} 	  \begin{equation*}\label{f2}
		S/(I(K_{u,v} \odot H), y_1)\cong K[V(K_{u-1,v}\odot H)]/I(K_{u-1,v}\odot H) \tensor_K K[i(H)] \tensor_K K[V(H)]/I(H).
	\end{equation*} By using induction on $u$,  Lemma \ref{le3},  Lemma \ref{circulentt}
		and  Lemma \ref{regtriv}, we have
		\begin{equation*}\begin{split}
		    \reg(S/(I(K_{u,v} \odot H):y_{1}))&= (u-1)\reg(K[V(N_{1}\odot H)]/I(N_{1}\odot H))+v\cdot\reg( K[V(H)]/I(H))\\&=(u-1)r+vr\\&=(u+v-1)r,
		\end{split}\end{equation*}
		\begin{equation*}\begin{split}
		    \reg(S/(I(K_{u,v} \odot H),y_1))&= \reg(K[V(K_{u-1,v}\odot H)]/I(K_{u-1,v}\odot H))  +\reg( K[V(H)]/I(H))\\&=(u+v-1)r+r\\&=(u+v)r.
		\end{split}\end{equation*}
	 Hence by using  Lemma \ref{regul2}(c), we have $	\reg(S/I(K_{u,v} \odot H))=(u+v)r. $ This completes the proof.
\end{description}	\end{proof}
\begin{Corollary}
	Let $u,v,m,n\in \mathbb{Z^{+}}$ and $q\geq 3.$ Then
	\begin{itemize}
	    \item [(a)] 
	 $	\reg(K[V(K_{u,v} \odot K_{m,n})]/I(K_{u,v} \odot  K_{m,n}))=	u+v. $
	  \item [(b)] 
	 $	\reg(K[V(K_{u,v} \odot P_m)]/I(K_{u,v} \odot  P_m))=	(u+v)\cdot \big\lceil \frac{m-1}{3}\big\rceil. $
	  \item [(c)] 
	 $	\reg(K[V(K_{u,v} \odot C_q)]/I(K_{u,v} \odot  C_q))=	(u+v)\cdot \big\lfloor \frac{q+1}{3}\big\rfloor. $
	\end{itemize}
\end{Corollary}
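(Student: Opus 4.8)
The plan is to run the same inductive scheme that proved Theorems \ref{THE2}, \ref{regcn}, \ref{regkn} and \ref{starreg}: delete a suitable vertex $y$ of the underlying graph, use the short exact sequence $0\to S/(I:y)\to S/I\to S/(I,y)\to 0$ together with Lemma \ref{regul2}(c), and reduce $\reg$ of the colon ideal and of the quotient to smaller corona products via $K$-algebra isomorphisms of the type recorded in Eq. \ref{e1} and Eq. \ref{e2}. We may assume $u\ge v$. The two base cases cost nothing: $K_{1,1}\cong P_2$ reduces the case $u=v=1$ to Theorem \ref{THE2} with $n=2$, and $K_{u,1}\cong S_u$ reduces the case $v=1$ to Theorem \ref{starreg}. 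So we take $u,v\ge 2$ and induct, splitting according to whether $H$ is null.

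\textbf{Null case.} Here I would delete $y_{u+v}$, a vertex of the smaller partite set $K_v$; its closed neighbourhood in $K_{u,v}\odot H$ consists of all of $K_u$ together with the single copy $H_{u+v}$. Thus in $(I:y_{u+v})$ the $v-1$ surviving vertices of $K_v$, each with its own copy of $H$, become disjoint copies of $N_1\odot H$, while the $u$ copies $H_1,\dots,H_u$ collapse to polynomial rings (their edge ideals being generated by variables, since $H$ is null) together with the now-freed isolated-vertex variables; this is exactly the specialization of Eq. \ref{e1} and Eq. \ref{e2}. Using Lemma \ref{le3} to strip polynomial variables, Lemma \ref{circulentt} on the genuine tensor factors, Lemma \ref{regtriv} for the $N_1\odot H$ pieces and induction on $v$ applied to $K_{u,v-1}\odot H$, one gets
$$\reg\bigl(S/(I(K_{u,v}\odot H):y_{u+v})\bigr)=v-1,\qquad \reg\bigl(S/(I(K_{u,v}\odot H),y_{u+v})\bigr)=\max\{u,v-1\}=u.$$
Since $u\ge v>v-1$, Lemma \ref{regul2}(c) gives $\reg(S/I(K_{u,v}\odot H))=u=\max\{u,v\}$. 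Deleting a vertex of the \emph{smaller} side is essential: a vertex of $K_u$ would make the two regularities equal and force the inconclusive case (b) of Lemma \ref{regul2}.

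\textbf{Non-null case,} with $r:=\reg(K[V(H)]/I(H))\ge 1$. Now I would delete a vertex $y_1$ of $K_u$: its closed neighbourhood is all of $K_v$ together with the copy $H_1$, so $(I:y_1)$ splits as a tensor product of $u-1$ copies of $K[V(N_1\odot H)]/I(N_1\odot H)$, $v$ copies of $K[V(H)]/I(H)$, a $K[i(H)]$-factor for each surviving copy, and the free variable $y_1$, while
$$S/(I(K_{u,v}\odot H),y_1)\cong K[V(K_{u-1,v}\odot H)]/I(K_{u-1,v}\odot H)\tensor_K K[i(H)]\tensor_K K[V(H)]/I(H).$$
By Lemma \ref{regtriv} each $N_1\odot H$ factor has regularity $r$, so Lemma \ref{le3} and Lemma \ref{circulentt} give $\reg(S/(I:y_1))=(u-1)r+vr=(u+v-1)r$, and induction on $u$ gives $\reg(S/(I,y_1))=(u+v-1)r+r=(u+v)r$. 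Because $r\ge 1$ these are strictly ordered, so Lemma \ref{regul2}(c) yields $\reg(S/I(K_{u,v}\odot H))=(u+v)r$, closing the induction.

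\textbf{Expected main obstacle.} The numerical bookkeeping with Lemma \ref{le3}, Lemma \ref{circulentt} and Lemma \ref{regtriv} is routine. The delicate point is establishing the $K$-algebra isomorphisms for $(I:y)$ and $(I,y)$: one must see precisely how removing the closed neighbourhood of a partite-set vertex $y$ — which swallows the entire opposite partite set and one whole copy of $H$, and frees every vertex that is isolated within its copy — decomposes $K_{u,v}\odot H$ into a smaller corona product $K_{u-1,v}\odot H$ or $K_{u,v-1}\odot H$, a controlled number of copies of $N_1\odot H$ and of $H$, and free variables. Once those are pinned down, the remaining care is the strict inequality required by Lemma \ref{regul2}(c): it is automatic from $r\ge 1$ in the non-null case and is exactly the reason one deletes a vertex of the smaller side in the null case.
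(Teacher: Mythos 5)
Your induction is, in substance, a correct re-derivation of Theorem \ref{combi}, and it follows the paper's proof of that theorem step for step: the same base cases $K_{1,1}\cong P_2$ (Theorem \ref{THE2}) and $K_{u,1}\cong S_u$ (Theorem \ref{starreg}), the same choice of vertex ($y_{u+v}$ on the smaller partite side in the null case, $y_1$ on the other side otherwise), the same isomorphisms as Eq. \ref{e1}, Eq. \ref{e2} and the two displayed in Case 2 of the paper's argument, and the same bookkeeping via Lemma \ref{le3}, Lemma \ref{circulentt}, Lemma \ref{regtriv} and Lemma \ref{regul2}(c), with the strict inequalities checked exactly as the paper checks them. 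Your remark about why one must delete a vertex of the smaller side in the null case is accurate (when $u>v$, deleting from the larger side gives equal regularities and lands in the inconclusive case (b) of Lemma \ref{regul2}).

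The gap is that you never actually prove the statement at hand. The corollary is the specialization of Theorem \ref{combi} to $H=K_{m,n}$, $H=P_m$ and $H=C_q$; in the paper it follows by inserting the values $\reg(K[V(K_{m,n})]/I(K_{m,n}))=1$, $\reg(K[V(P_m)]/I(P_m))=\lceil (m-1)/3\rceil$ and $\reg(K[V(C_q)]/I(C_q))=\lfloor (q+1)/3\rfloor$ into the non-null case of that theorem. Your proposal stops at the general formula $(u+v)\cdot\reg(K[V(H)]/I(H))$ (resp. $\max\{u,v\}$) and never returns to these three graphs, so parts (a)--(c) are not obtained. This last step is not purely formal: $P_m$ is chordal, so Lemma \ref{reg1} plus $\indmat(P_m)=\lceil(m-1)/3\rceil$ settles (b), but $C_q$ for $q\geq 4$ and $K_{m,n}$ for $m,n\geq 2$ are not chordal, so the values $\lfloor(q+1)/3\rfloor$ and $1$ need a citation or a separate argument (for $K_{m,n}$, e.g., the complement is a disjoint union of complete graphs, so the edge ideal has a linear resolution). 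Finally, when you do substitute, note that for $m=1$ the graph $P_1$ is the trivial (null) graph, so the theorem gives $\max\{u,v\}$ rather than $(u+v)\cdot 0$; part (b) therefore requires $m\geq 2$, a caveat neither your proposal nor the paper's statement records.
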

		\section{Krull dimension and Cohen-Macaulay graphs}\label{sec3}
	In this section, we find an  expression for the Krull dimension of residue class rings of edge ideals associated with the corona product of two graphs if the Krull dimension of one graph is given. Moreover, by using the values of depth given in Section \ref{sec2}, we characterize some Cohen-Macaulay graphs.
	\begin{Theorem}\label{krulldim}
			Let $X$ and  $H$ be any two  graphs and $S=K[V(X\odot H)].$ Then
			
			\begin{equation*}
				\dim(S/I(X\odot H))=|V(X)|\cdot \big(\dim(K[V(H)]/I( H))+|i(H)|\big).\end{equation*}
		\end{Theorem}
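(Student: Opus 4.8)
The plan is to reduce the computation of the Krull dimension to a computation of independence numbers via Lemma~\ref{prok}, which gives $\dim(S/I(G))=\alpha(G)$, where $\alpha(G)$ denotes the maximum size of an independent set of $G$. Observe first that $X\odot H$ has no isolated vertices (each $y_i$ is joined to every vertex of $H_i$, and each vertex of $H_i$ is joined to $y_i$), so Lemma~\ref{prok} applies cleanly and it suffices to prove $\alpha(X\odot H)=|V(X)|\cdot\big(\dim(K[V(H)]/I(H))+|i(H)|\big)$. Using the $K$-algebra isomorphism $K[V(H)]/I(H)\cong K[V(H')]/I(H')$ recorded in Section~\ref{section1} together with Lemma~\ref{prok} applied to $H'$ (which has no isolated vertices), we get $\dim(K[V(H)]/I(H))=\alpha(H')$; and since any isolated vertex of $H$ may be adjoined to any independent set of $H'$ without creating an edge, $\alpha(H)=\alpha(H')+|i(H)|$. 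Hence the claimed identity is equivalent to the purely combinatorial statement $\alpha(X\odot H)=|V(X)|\cdot\alpha(H)$, which I will prove by a direct double inequality.

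Write $n:=|V(X)|$ and use the partition $V(X\odot H)=\bigcup_{i=1}^{n}\big(\{y_i\}\cup V(H_i)\big)$ into $n$ disjoint ``blocks''. For the lower bound, fix a maximum independent set $W_0$ of $H$ and let $W_i\subseteq V(H_i)$ be its image under $H_i\cong H$; then $W:=\bigcup_{i=1}^{n}W_i$ is independent in $X\odot H$, because any edge with both endpoints in $W\subseteq\bigcup_i V(H_i)$ would have to lie inside a single copy $H_i$ (distinct copies are pairwise non-adjacent, and the $y_i$ are the only vertices outside the copies), contradicting independence of $W_i$ in $H_i$. Thus $\alpha(X\odot H)\geq|W|=n\,\alpha(H)$. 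For the upper bound, let $W$ be any independent set of $X\odot H$ and put $W_i:=W\cap V(H_i)$. If $y_i\in W$, then $y_i$ is adjacent to every vertex of $H_i$, so $W_i=\emptyset$ and the block $\{y_i\}\cup V(H_i)$ contributes exactly $1$ to $|W|$; if $y_i\notin W$, then $W_i$ is independent in $H_i\cong H$, so the block contributes $|W_i|\leq\alpha(H)$. Since $V(H)\neq\emptyset$ we have $\alpha(H)\geq1$, so in either case each block contributes at most $\alpha(H)$ to $|W|$, giving $|W|\leq n\,\alpha(H)$. Therefore $\alpha(X\odot H)=n\,\alpha(H)$, and combining with the reductions above,
\[
\dim(S/I(X\odot H))=\alpha(X\odot H)=n\,\alpha(H)=|V(X)|\cdot\big(\dim(K[V(H)]/I(H))+|i(H)|\big).
\]

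The argument is essentially bookkeeping once one passes to independence numbers, so I do not expect a serious obstacle; the only delicate points are the conventions around isolated vertices — one must note that $X\odot H$ itself has none (so that Lemma~\ref{prok} yields exactly the independence number) and that the upper bound uses $\alpha(H)\geq1$, i.e.\ $V(H)\neq\emptyset$, which is assumed throughout the paper. Alternatively, one could mimic the inductive style of Theorems~\ref{The1} and~\ref{The3}, splitting off the block of a vertex $y_n$ of $X$ and inducting on $|V(X)|$; but the direct combinatorial computation above is shorter and more transparent.
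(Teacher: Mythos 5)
Your proposal is correct, and it shares the paper's overall strategy — both pass to independence numbers through Lemma \ref{prok} and both take as candidate the union $W=W_1\cup\dots\cup W_{|V(X)|}$ of maximum independent sets of the copies $H_1,\dots,H_{|V(X)|}$, with the same bookkeeping $\dim(K[V(H)]/I(H))+|i(H)|=\alpha(H')+|i(H)|=\alpha(H)$ coming from the paper's convention on isolated vertices of $H$. Where you genuinely diverge is in how maximality of $W$ is established. The paper first assumes $X$ connected, argues by contradiction that no independent set $W'$ can exceed $W$ (a vertex-replacement argument distinguishing whether the extra vertex lies in some $V(H_j)$ or equals some $y_j$), and then handles disconnected $X$ separately by decomposing into connected components and citing an additivity result for Krull dimension. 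Your block-counting upper bound — partitioning $V(X\odot H)$ into the blocks $\{y_i\}\cup V(H_i)$ and noting each block contributes at most $\alpha(H)$ to any independent set, since $y_i\in W$ forces $W\cap V(H_i)=\emptyset$ and $\alpha(H)\geq 1$ — replaces that contradiction argument with a direct double inequality, works uniformly whether or not $X$ is connected (so no case split and no external lemma is needed), and makes explicit the one hypothesis both proofs silently rely on, namely $V(H)\neq\emptyset$. The trade-off is only stylistic: the paper's proof stays closer to the language of maximum independent sets and component decompositions, while yours is shorter and isolates the combinatorial identity $\alpha(X\odot H)=|V(X)|\cdot\alpha(H)$ as the core fact.
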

		\begin{proof}
	%By definition \ref{defcorona}, $X_1\cong \dots\cong X_{|V(H)|} \cong X$ and $|W_1|=\dots =|W_{|V(H)|}|=|W^*|.$
	
First we assume that $X$ is connected. By Lemma \ref{prok}, if $W$ is a maximum independent set of graph  $X\odot H$, then $\dim(S/I(X\odot H))=|W|.$ We need to prove that there exists an independent set $W$ such that $|W|=|V(X)|\cdot  \big(\dim(K[V(H)]/I( H))+|i(H)|\big).$  %As for any graph $H,$ we denote the set of isolated vertices of $H$ by $i(H)$  and if $A:=V(H)\backslash i(H)$ then we denote the induced subgraph of $H$ on  $A$ by $H'.$ Clealy $\dim(K[V(H)])/I(H)=\dim(K[V(H')])/I(H').$  By Definition \ref{defcorona}, arbitrary $j^{th}$ vertex of $X$ is pairwise adjacent to each vertex in $V(H_j)$  and any two vertices in $i(H_j)$ are pairwise non-adjacent in graph $X\odot H,$ where $j\in \{1,\dots, |V(X)| \}.$ Therefore, $i(H_j)$ have a significance importance in determining the maximum independent set of $X\odot H.$ 
Let $B$ be an induced  subgraph of $X\odot H$ on vertex set $\underset{j=1}{\overset{|V(X)|}{\cup}} V(H_j),$ clearly $B$ is a disjoint union of  $|V(X)|$ graphs  where each graph is isomorphic to $H.$ If $W_j$ is a maximum independent set of the $j^{th}$ copy of $H$ in $B,$ then clearly $ W=W_1\cup W_2\dots\cup W_{|V(X)|}$ is a maximum independent set of $B.$
It is easy to see that $|W_j|=\dim(K[V(H)]/I( H))+|i(H)|.$
%\begin{equation}\label{meexi}  W=W_1\cup W_2\dots\cup W_{|V(X)|}. \end{equation} 
%Let $W_1,\dots, W_{|V(X)|}$  be the   maximum independent sets of induced subgraphs of $j$ $X\odot H$ on  $\underset{j=1}{\overset{|V(X)|}{\cup}} V(H_j),$ corresponding to	$H_1,\dots, H_{|V(X)|}$  copies of graph $H$. Here $H_j\cong H$ and  $|W_j|=\dim(K[V(H)]/I( H))+|i(H)|.$  Let us define \begin{equation}\label{meexi}  W=W_1\cup W_2\dots\cup W_{|V(X)|}.	\end{equation} Clearly, $W$ is an independent set of $X\odot H$ as  it is a union of all maximum independent sets of induced subgraph of $X\odot H$ on $\underset{j=1}{\overset{|V(X)|}{\cup}} V(H_j)$.
	We claim that $W$ is a maximum independent set of  $X\odot H.$ On contrary, suppose that there exists another maximum independent set $W'$ of $X\odot H$ such that $|W'|>|W|.$ This implies that there exists a vertex $z\in W'$ such that  $z\notin W.$ Then we have two cases to discuss that is either $z\in V(X)$ or $z\in \underset{j=1}{\overset{|V(X)|}{\cup}} V(H_j).$
	If $z\in \underset{j=1}{\overset{|V(X)|}{\cup}} V(H_j) $ then  for some $j,$  $z\notin W_j$ where $W_j$ is a maximum independent set of induced subgraph of $X\odot H$ on $V(H_j).$ This implies that $z$ is adjacent to some vertex in $W_j.$ This is a contradiction to the assumption that $W'$ is an independent set. Now assume that $z\in V(X).$ This means that for some $j,$ we have $z=y_j.$ By Definition \ref{defcorona}, $y_j$ is pairwise adjacent to each vertex in $V(H_j),$ therefore  $W'\subset V(X\odot H)\backslash N_{X\odot H}(y_j).$  Since $|N_{X\odot H}(y_j)|\geq 1,$ then $|V(X\odot H)\backslash \{y_j\}|\geq |V(X\odot H)\backslash N_{X\odot H}(y_j)|.$ This implies that the cardinality of the maximum independent set of induced subgraph of $X\odot H$ on $V(X\odot H)\backslash \{y_j\}$ is strictly greater than or equals to the cardinality of a maximum independent set of induced subgraph of $X\odot H$ on $V(X\odot H)\backslash N_{X\odot H}(y_j).$ Clearly, $W \subset V(X\odot H)\backslash \{y_j\}$ and $W'\subset V(X\odot H)\backslash N_{X\odot H}(y_j).$
	We get $|W|\geq |W'|,$ which contradicts our assumption. If $X$ is  a not a connected graph, that is, $X$ is a union of disjoint connected components say $X_1,\dots,X_l,$ then by {\cite[Lemma 1.11(3)]{hibikrull}}, we have $\dim(S/I(X\odot H))= \sum_{k=1}^{l} \dim(K[V(X_k\odot H)]/I(X_k\odot H))$   and our result follows.	\end{proof}
	
		\begin{Corollary}
	       Let $m,n,u,v,r \in \mathbb{Z}^+$ and $l,q \geq 3.$  Then
	       	\begin{itemize}
	       	    \item[(a)] 	 $\dim(K[V(P_{n}\odot P_{m})]/I(P_{n}\odot P_{m}))=n\cdot 	\big\lceil \frac{m}{2}\big\rceil.$
	       	     \item[(b)] 	 $\dim(K[V(C_{l}\odot C_{q})]/I(C_{l}\odot C_{q}))=l\cdot 	\big\lceil \frac{q-1}{2}\big\rceil.$
	       	      \item[(c)] 	 $\dim(K[V(S_{n}\odot S_{m})]/I(S_{n}\odot S_{m}))=(n+1)\cdot m.$
	       	       \item[(d)] 	 $\dim(K[V(K_{n}\odot K_{m})]/I(K_{n}\odot K_{m}))=n.$
	       	        \item[(e)] 	 $\dim(K[V(K_{m,n}\odot K_{u,v}))]/I(K_{m,n}\odot K_{u,v}))=(m+n)\cdot \max\{u,v\}.$
	       	         \item[(f)]  If $G$ is a disjoint union of $C_q$ and $N_r,$	then $\dim(K[V(P_n \odot G))]/I(P_n \odot G))=n\cdot (	\big\lceil \frac{q-1}{2}\big\rceil+r).$
	       	\end{itemize}
	\end{Corollary}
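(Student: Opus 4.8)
The plan is to obtain all six identities as immediate specializations of Theorem \ref{krulldim}, so the only real work is bookkeeping: for each choice of $X$ and $H$ I must record $|V(X)|$ and evaluate $\dim(K[V(H)]/I(H))+|i(H)|$. The key preliminary observation I would make is that this last quantity is simply the independence number $\alpha(H)$ of $H$. Writing $H'$ for the induced subgraph of $H$ on its non-isolated vertices as in Section \ref{section1}, we have $\dim(K[V(H)]/I(H))=\dim(K[V(H')]/I(H'))$, and Lemma \ref{prok} identifies the latter with $\alpha(H')$, the size of a maximum independent set of $H'$. Since $H'$ has no isolated vertices there is no ambiguity in applying Lemma \ref{prok}, and since each isolated vertex of $H$ belongs to every maximum independent set and contributes $1$ to both $|i(H)|$ and to the independence number, we get $\dim(K[V(H)]/I(H))+|i(H)|=\alpha(H')+|i(H)|=\alpha(H)$. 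Thus Theorem \ref{krulldim} reads $\dim(S/I(X\odot H))=|V(X)|\cdot\alpha(H)$, and this reformulation handles the degenerate cases $P_1$ and $K_1$ uniformly (where a lone vertex is counted as isolated).

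With this in hand, each of parts (a)--(e) follows from a short, standard independence-number computation together with the vertex counts $|V(P_n)|=n$, $|V(C_l)|=l$, $|V(S_n)|=n+1$, $|V(K_n)|=n$, and $|V(K_{m,n})|=m+n$. For (a) I would use $\alpha(P_m)=\lceil m/2\rceil$; for (b), $\alpha(C_q)=\lfloor q/2\rfloor$, which I then rewrite as $\lceil (q-1)/2\rceil$; for (c), $\alpha(S_m)=m$, realized by the set of all $m$ leaves (the centre is adjacent to every leaf, so no larger independent set exists); for (d), $\alpha(K_m)=1$ since any two vertices are adjacent; and for (e), $\alpha(K_{u,v})=\max\{u,v\}$, realized by the larger of the two partite sets. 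Substituting these into $\dim(S/I(X\odot H))=|V(X)|\cdot\alpha(H)$ yields the stated formulas $n\lceil m/2\rceil$, $l\lceil (q-1)/2\rceil$, $(n+1)m$, $n$, and $(m+n)\max\{u,v\}$ respectively.

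For part (f) the graph $G=C_q\cup N_r$ has isolated vertices, and this is where the reformulation earns its keep: $i(G)$ is exactly the $r$ vertices of the $N_r$-component, so $|i(G)|=r$ and $G'=C_q$. Hence $\dim(K[V(G)]/I(G))+|i(G)|=\alpha(C_q)+r=\lceil (q-1)/2\rceil+r$, and Theorem \ref{krulldim} with $|V(P_n)|=n$ gives $\dim(K[V(P_n\odot G)]/I(P_n\odot G))=n\,(\lceil (q-1)/2\rceil+r)$; equivalently one may note that $\alpha$ is additive over connected components, so $\alpha(G)=\alpha(C_q)+\alpha(N_r)$ with $\alpha(N_r)=r$. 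I do not anticipate any genuine obstacle: all the substance resides in Theorem \ref{krulldim}, and the remaining steps are elementary independence-number evaluations. The only point demanding a line of care is the identity $\lfloor q/2\rfloor=\lceil (q-1)/2\rceil$ used in (b) and (f), which I would verify by separating the cases $q$ even and $q$ odd.
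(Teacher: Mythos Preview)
Your proposal is correct and follows exactly the approach the paper intends: the corollary is stated without proof immediately after Theorem \ref{krulldim}, so the paper treats it as an immediate specialization, and your reformulation $\dim(K[V(H)]/I(H))+|i(H)|=\alpha(H)$ together with the standard independence-number computations for $P_m$, $C_q$, $S_m$, $K_m$, $K_{u,v}$, and $C_q\cup N_r$ is precisely what is needed to fill in the omitted details.
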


		\begin{Lemma}\label{lemcomplete}
		Let $H$ be a non-trivial connected graph and $ S=K[V(H)].$ Then $\dim(S/I(H))=1$  iff $H$ is complete graph.
	\end{Lemma}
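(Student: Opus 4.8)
The plan is to prove both implications via the combinatorial characterization of Krull dimension from Lemma \ref{prok}, namely $\dim(S/I(H))$ equals the independence number $\alpha(H)$ of $H$. Since $H$ is a non-trivial connected graph, it has at least two vertices and at least one edge, so $\alpha(H) \geq 1$ always. Thus the claim $\dim(S/I(H)) = 1$ is equivalent to $\alpha(H) = 1$, i.e.\ every pair of distinct vertices of $H$ is adjacent, which is exactly the statement that $H$ is a complete graph. So in principle the lemma is almost immediate once one invokes Lemma \ref{prok}; the only content is translating ``independence number $1$'' into ``complete graph.''

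For the forward direction, I would argue by contraposition: suppose $H$ is connected, non-trivial, but not complete. Then there exist two distinct, non-adjacent vertices $x_i, x_j \in V(H)$, and $\{x_i, x_j\}$ is an independent set of size $2$, hence $\alpha(H) \geq 2$, so by Lemma \ref{prok} we get $\dim(S/I(H)) \geq 2 \neq 1$. For the reverse direction, if $H = K_r$ with $r \geq 2$ (it cannot be $K_1$ since $H$ is non-trivial), then every two distinct vertices are joined by an edge, so any independent set has at most one vertex; since a single vertex is trivially independent, $\alpha(H) = 1$, and Lemma \ref{prok} gives $\dim(S/I(H)) = 1$.

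I do not anticipate a genuine obstacle here — the statement is essentially a reformulation of the definition of the independence number combined with the already-cited Lemma \ref{prok}. The only point requiring a small amount of care is making sure the hypotheses ``non-trivial'' and ``connected'' are used correctly: non-triviality rules out the degenerate case $H = N_1$ (for which $S/I(N_1) = K$ has dimension $1$ but $N_1$ is not usually regarded as a complete graph in the author's convention, though in fact $K_1 = N_1$ in their setup), and connectedness is what guarantees that a complete graph on the vertex set is the \emph{only} way to force $\alpha(H) = 1$; without connectedness one would also have to worry about graphs like $N_r$ with $r \geq 2$, but those are disconnected and have $\alpha = r \geq 2$ anyway, so in fact connectedness is not strictly needed for the equivalence once $H$ has an edge. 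I would state the proof in the clean two-sentence form above, citing Lemma \ref{prok} as the sole external input.
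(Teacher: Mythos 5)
Your proposal is correct and follows essentially the same route as the paper: both invoke Lemma \ref{prok} to identify $\dim(S/I(H))$ with the independence number and then observe that independence number $1$ is equivalent to all vertices being pairwise adjacent, i.e.\ $H$ complete. Your contrapositive phrasing of the forward direction and the remark that connectedness is not strictly needed are fine refinements, but the substance matches the paper's argument.
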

	\begin{proof}
		Let $W$ be the maximum independent set of $H$. If $\dim(S/I(H))=1$ (i.e $|W|=1$)   implies that all the vertices of $H$ are pairwise adjacent which proves that $H$ is a complete graph. The converse statement easily followed by Lemma \ref{prok}.
	\end{proof}
	\noindent 	  % Villarreal in {\cite[Proposition 2.2]{vilareal}} proved that whiskered graph is Cohen Macaulay. If $H$ is a trivial graph, then $X\odot H$ is a whiskered graph and our next result holds.
	If $H$ is a trivial graph, then $X\odot H$ is a whisker graph of $X.$ Villarreal proved in {\cite[Proposition 2.2]{vilareal}} that whisker graph of any graph is  Cohen-Macaulay graph. So in our next theorem, we characterize all  Cohen-Macaulay graphs $X\odot H,$ if $X \in \{P_{n},C_{n},K_{n},S_{n+1},K_{u,v}\}.$
	\begin{Theorem}\label{cohnn}
		Let $H$ be any graph, $X\in \{P_{n},C_{n},K_{n},S_{n+1},K_{u,v}\}$ and $ S=K[V(X \odot H)].$ 
		%	 one of the following graphs:
		%	\begin{itemize}
		%	\item[(i)] H is $P_{n}$ with  $n\geq 2$,
		%	\item[(ii)] H is $C_{n}$ with  $n\geq 3$,
		%	\item[(iii)] H is $\overline{K}_{m}$ be a null graph,
		%		\item[(iv)] H is $K_{m}$ be a complete graph,
		%		\item[(v)] H is $D_{m+1}$ be a star graph,
		%		\item[(vi)] H is $\mathcal{K}_{u,v}$ be the complete bipartite graph.\end{itemize}
		Then $S/I(X \odot H)$ is  Cohen-Macaulay iff $H$ is a complete graph.
	\end{Theorem}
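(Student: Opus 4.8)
The plan is to compare the depth and the Krull dimension of $S/I(X\odot H)$, both of which are already at hand. Write $d:=\depth(K[V(H)]/I(H))+|i(H)|$ and $D:=\dim(K[V(H)]/I(H))+|i(H)|$, and record two preliminary facts. First, $d\le D$: since $\depth(M)\le\dim(M)$ for every finitely generated module $M$, this holds for $M=K[V(H)]/I(H)$, and adding $|i(H)|$ to both sides gives the inequality. Second, $D$ equals the independence number $\alpha(H)$ of $H$: using $\dim(K[V(H)]/I(H))=\dim(K[V(H')]/I(H'))$ and Lemma~\ref{prok} we get $\dim(K[V(H')]/I(H'))=\alpha(H')$, and, the isolated vertices of $H$ belonging to every maximum independent set, $D=\alpha(H')+|i(H)|=\alpha(H)$; in particular $D\ge1$ because $V(H)\ne\emptyset$, and $D=1$ if and only if every two vertices of $H$ are adjacent, i.e.\ $H$ is a complete graph. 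Finally, Theorem~\ref{krulldim} gives $\dim(S/I(X\odot H))=|V(X)|\cdot D$, while Theorems~\ref{The1}, \ref{The3}, \ref{The119}, \ref{The1199} and \ref{The1177} show that in each of the five cases $\depth(S/I(X\odot H))=a_X+b_X\,d$ for non-negative integers $a_X,b_X$ with $a_X+b_X=|V(X)|$; concretely $(a_X,b_X)$ is $\big(\lceil\tfrac{n}{2}\rceil,\lceil\tfrac{n-1}{2}\rceil\big)$, $\big(\lceil\tfrac{n-1}{2}\rceil,\lceil\tfrac{n}{2}\rceil\big)$, $(1,n-1)$, $(n+1,1)$ and $\big(\max\{u,v\},\min\{u,v\}\big)$ for $X=P_n,C_n,K_n,S_{n+1},K_{u,v}$, and $b_X\ge1$ unless $X=N_1$ (i.e.\ $X=P_1$ or $X=K_1$).

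Assume first that $|V(X)|\ge2$, so $a_X\ge1$ and $b_X\ge1$. Then $S/I(X\odot H)$ is Cohen--Macaulay exactly when $a_X+b_X\,d=(a_X+b_X)\,D$, that is, when
\[
a_X(1-D)+b_X(d-D)=0.
\]
Since $D\ge1$ and $d\le D$, both summands on the left are $\le0$; as $a_X\ge1$ and $b_X\ge1$, the equation forces $D=1$ and $d=D$, i.e.\ $d=D=1$. Conversely, $D=1$ means that $H$ is complete, and if $H=K_m$ then also $d=1$ (for $m=1$, $\depth(K[V(H)]/I(H))=0$ and $|i(H)|=1$; for $m\ge2$, $K[V(K_m)]/I(K_m)$ is Cohen--Macaulay of dimension $1$ and $|i(K_m)|=0$), so $a_X+b_X\,d=a_X+b_X=|V(X)|\cdot D$ and $S/I(X\odot H)$ is Cohen--Macaulay. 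Hence, for $|V(X)|\ge2$, the ring is Cohen--Macaulay if and only if $H$ is complete. The leftover case $X=N_1$ (so $P_1\odot H\cong K_1\odot H\cong N_1\odot H$) is settled by Lemma~\ref{triv}: there $\depth(S/I(N_1\odot H))=1$, to be compared with $\dim(S/I(N_1\odot H))=D$, and these coincide precisely when $D=1$, again precisely when $H$ is complete.

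Combining the two cases proves the theorem. The only real work has already been done — the depth formulas of Section~\ref{sec2} and the Krull-dimension formula of Theorem~\ref{krulldim} — so I foresee no genuine obstacle; what remains is the elementary inequality $d\le D$ and the identification $D=1\iff H$ complete via Lemma~\ref{prok}. The one point needing a bit of attention is keeping track of the degenerate parameters, namely $X=N_1$ and the small complete graph $H=K_1$ (which is also a null graph), and these are isolated in the argument above.
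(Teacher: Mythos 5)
Your proposal is correct and takes essentially the same route as the paper: it characterizes Cohen--Macaulayness by equating the depth formulas of Theorems \ref{The1}, \ref{The3}, \ref{The119}, \ref{The1199}, \ref{The1177} with the Krull dimension from Theorem \ref{krulldim}, and shows the equality holds precisely when $H$ is complete. The only difference is organizational: you merge the paper's five case-by-case verifications into one arithmetic argument (via $a_X+b_X=|V(X)|$, $d\le D$, $D\ge 1$, with $D=1$ iff $H$ complete playing the role of Lemma \ref{lemcomplete}) and you isolate the degenerate case $|V(X)|=1$ explicitly, which is a slightly tidier bookkeeping of the same idea.
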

	%\begin{Theorem}
	%	Let $P_{n}$ be a path on $n\geq 2$ vertices and $H$ be any graph such that $t=\depth(S_q/I(H))=\depth(S_b/I(H^{\star}))+a=t_{1}+a\geq1$, where $H^{\star}\subseteq H$ on non-isolated vertices and $a$ are isolated vertices in $H$, then $S/I(P_{n}\odot H)$ is Cohen-Macaulay if and only if $\dim(S/I(H))=1.$
	%\end{Theorem}
	\begin{proof}  %If $H$ is a trivial graph, then $X\odot H$ is a Cohen-Macaulay graph as proved by Villarreal in {\cite[Proposition 2.2]{vilareal}} that whiskered graph is Cohen Macaulay. 
By  Lemma \ref{lemcomplete}, a non-trivial connected graph $H$ is complete iff $\dim(K[V(H)]/I(H))=\depth(K[V(H)]/I(H))=1.$ 	   We discuss all the cases one by one as follows:
	
		\begin{description}
	
	    \item[1] 	Let  $X=P_{n}.$ By  Theorem \ref{The1} and  Theorem \ref{krulldim}, the module  $S/I(P_{n}\odot H)$ is Cohen-Macaulay iff $\depth(S/I(P_{n}\odot H))=\dim(S/I(P_{n}\odot H))$ iff     $$\big\lceil\frac{n}{2}\big\rceil+\big\lceil\frac{n-1}{2}\big\rceil \big(\depth(K[V(H)]/I(H))+|i(H)|\big)=n\cdot \big(\dim(K[V(H)]/I( H))+|i(H)|\big)$$ iff $|i(H)|=1$ and $\depth(K[V(H)]/I(H))=0=\dim(K[V(H)]/I(H))$ or   $|i(H)|=0$ and $\dim(K[V(H)]/I(H))=\depth(K[V(H)]/I(H))=1$ iff  $H$ is a complete graph.\\
			\item[2] If $X=C_n,$  then by using Theorem \ref{The3} and Theorem \ref{krulldim}, the  module  $S/I(C_{n} \odot H)$ is Cohen-Macaulay iff $\depth(S/I(C_{n} \odot H))=\dim(S/I(C_{n} \odot H))$ iff  $$\big\lceil\frac{n-1}{2}\big\rceil+\big\lceil\frac{n}{2}\big\rceil \big(\depth(K[V(H)]/I(H))+|i(H)|\big)=n \cdot \big(\dim(K[V(H)]/I( H))+|i(H)|\big)$$ iff $|i(H)|=1$ and $\depth(K[V(H)]/I(H))=\dim(K[V(H)]/I(H))=0$ or   $|i(H)|=0$ and  $\dim(K[V(H)]/I(H))=\depth(K[V(H)]/I(H))=1$ iff  $H$ is a complete graph.\\
	\item[3] 
Let $X=K_{n}.$  By Theorem \ref{The119} and Theorem \ref{krulldim}, the module  $S/I(K_{n} \odot H)$ is Cohen-Macaulay iff $\depth(S/I(K_{n} \odot H))=\dim(S/I(K_{n} \odot H))$ iff 
$$1+(n-1)\big(\depth(K[V(H)]/I(H))+|i(H)|\big)=n \cdot \big(\dim(K[V(H)]/I( H))+|i(H)|\big)$$ iff $|i(H)|=1$ and $\depth(K[V(H)]/I(H))=\dim(K[V(H)]/I(H))=0$ or   $|i(H)|=0$ and  $\dim(K[V(H)]/I(H))=\depth(K[V(H)]/I(H))=1$ iff $H$ is a complete graph.  \\
\item[4] 
If $X=S_{n},$ then  by  Theorem \ref{The1199} and  Theorem \ref{krulldim},  we have
%$\depth (S/I(S_{n} \odot H))=n+\depth(K[V(H)]/I(H))+|i(H)|$ and by Theorem \ref{krulldim}, we get $\dim(S/I(S_{n} \odot H))=(n+1) \cdot \big(\dim(K[V(H)]/I( H))+|i(H)|\big).$ The module
$S/I(S_{n} \odot H)$ is Cohen-Macaulay iff $\depth (S/I(S_{n} \odot H))=\dim(S/I(S_{n} \odot H))$ iff  $$ n+\depth(K[V(H)]/I(H))+|i(H)|=(n+1) \cdot \big(\dim(K[V(H)]/I( H))+|i(H)|\big)$$  iff $|i(H)|=1$ and $\depth(K[V(H)]/I(H))=\dim(K[V(H)]/I(H))=0$ or   $|i(H)|=0$ and  $\dim(K[V(H)]/I(H))=\depth(K[V(H)]/I(H))=1$ iff $H$ is a complete graph.\\

\item[5] Let $X=K_{u,v}.$ Without loss of generailty, we assume that $v\geq u.$ Using Theorem \ref{The1177} and Theorem \ref{krulldim}, the module $S/I(K_{u,v}\odot H)$ is Cohen-Macaulay iff $\depth(S/I(K_{u,v}\odot H))=\dim(S/I(K_{u,v}\odot H))$ iff   $$u\cdot \big(\depth(K[V(H)]/I(H))+|i(H)|\big)+v=(u+v)\cdot \big(\dim(K[V(H)]/I( H))+|i(H)|\big)$$ iff $|i(H)|=1$ and $\depth(K[V(H)]/I(H))=\dim(K[V(H)]/I(H))=0$ or   $|i(H)|=0$ and  $\dim(K[V(H)]/I(H))=1=\depth(K[V(H)]/I(H))$ iff $H$ is a complete graph.
\end{description}
	\end{proof}
	Let  $G_1 \sqcup G_2$ denotes a disjoint union of two graphs $G_1$ and $G_2.$ By using Theorem {\cite[Theorem 3.2]{haghi}}, we have the following corollary.
\begin{Corollary}
           Let $H$ be any graph and $G$ be a graph such that $G=G_1\sqcup G_2\sqcup\dots\sqcup G_k,$ where $G_i \in \{P_{n},C_{n},K_{n},S_{n+1},K_{u,v}\}.$  Then $K[V(G\odot H)]/I(G\odot H)$ is Cohen-Macaulay iff $H$ is a complete graph.
\end{Corollary}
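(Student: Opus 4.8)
The plan is to reduce the statement to the connected case already handled by Theorem \ref{cohnn}. First I would recall, from the Remark following Definition \ref{defcorona}, that the corona product distributes over the connected components of the left factor: since $G = G_1 \sqcup G_2 \sqcup \dots \sqcup G_k$, the graph $G\odot H$ is the disjoint union $(G_1\odot H)\sqcup(G_2\odot H)\sqcup\dots\sqcup(G_k\odot H)$. Writing $S_i := K[V(G_i\odot H)]$ and $I_i := I(G_i\odot H)$, and viewing all the $I_i$ as ideals in disjoint sets of variables, this gives a $K$-algebra isomorphism $S/I(G\odot H)\cong S_1/I_1\otimes_K S_2/I_2\otimes_K\dots\otimes_K S_k/I_k$.

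Next I would invoke the additivity of depth and Krull dimension over such tensor products: iterating Lemma \ref{LEMMA1.5} yields $\depth\big(S/I(G\odot H)\big)=\sum_{i=1}^k\depth(S_i/I_i)$, while Lemma \ref{prok} (the independence number of a disjoint union is the sum of the independence numbers), or equivalently Theorem \ref{krulldim} applied componentwise, gives $\dim\big(S/I(G\odot H)\big)=\sum_{i=1}^k\dim(S_i/I_i)$. Since $\depth(S_i/I_i)\le\dim(S_i/I_i)$ for every $i$, the equality $\depth=\dim$ holds for the tensor product if and only if it holds for each individual factor; this is exactly the content of {\cite[Theorem 3.2]{haghi}}. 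Hence $S/I(G\odot H)$ is Cohen--Macaulay if and only if $K[V(G_i\odot H)]/I(G_i\odot H)$ is Cohen--Macaulay for all $i=1,\dots,k$.

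Finally, since each $G_i$ belongs to $\{P_n,C_n,K_n,S_{n+1},K_{u,v}\}$, Theorem \ref{cohnn} applies to each component separately and tells us that $K[V(G_i\odot H)]/I(G_i\odot H)$ is Cohen--Macaulay if and only if $H$ is a complete graph (recall that the trivial graph $K_1$ is complete, so the whisker case is covered as well). Because the second factor $H$ is the \emph{same} graph for all $i$, the conditions for the $k$ components are satisfied simultaneously or fail simultaneously, and the biconditional propagates through both previous steps to give the claim. The only point requiring any care is justifying that depth and dimension really do split across the disjoint components as asserted when passing to {\cite[Theorem 3.2]{haghi}}; but this is immediate from Lemma \ref{LEMMA1.5} and Lemma \ref{prok}, so I expect no genuine obstacle beyond this bookkeeping.
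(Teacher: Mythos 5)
Your proposal is correct and takes essentially the same route as the paper: decompose $G\odot H$ into the connected components $G_i\odot H$, reduce Cohen--Macaulayness of the whole to Cohen--Macaulayness of each component (the paper handles this step simply by citing {\cite[Theorem 3.2]{haghi}}), and then apply Theorem \ref{cohnn} to every component, using that $H$ is the same graph throughout so the $k$ biconditionals hold or fail simultaneously. Your explicit additivity argument — $\depth$ via iterating Lemma \ref{LEMMA1.5}, $\dim$ via Lemma \ref{prok} (or Theorem \ref{krulldim} componentwise), together with $\depth(S_i/I_i)\le\dim(S_i/I_i)$ — is a sound, self-contained justification of that reduction and in fact makes the external citation dispensable.
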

\section*{Conclusion}
		In last decades, edge ideals have garnered significant attention; see for instance \cite{vann}. Various findings on these edge ideals have demonstrated how combinatorial and algebraic aspects interact. The primary goal of this paper is to compute algebraic invariants such as depth, Stanley depth, regularity, projective dimension and Krull dimension of edge ideals associated with the corona product of two graphs and as a consequence of our findings, we characterize some Cohen–Macaulay graphs. By restricting one of the class to a well-known graph in corona product of two graphs, we are able to handle the algebraic invariants for these structures, and our results gives strong motivation for further studies into this intriguing area to compute the algebraic invariants when both classes in corona product of two graphs are arbitrary.

	\end{document}